\providecommand{\U}[1]{\protect\rule{.1in}{.1in}}
\newtheorem{thm}{Theorem}[section]
\newtheorem{lem}{Lemma}[section]
\newtheorem{prop}{Proposition}[section]
\newenvironment{proof}[1][Proof]{\noindent\textbf{#1.} }{\ \rule{0.5em}{0.5em}}
\normalsize\setlength{\parskip}{\baselineskip}
\numberwithin{equation}{section}
\definecolor{linkcolor}{rgb}{0,0,0.7}
\begin{document}

\title{Optimal dividend and capital injection under spectrally positive Markov additive models}
\author{Wenyuan Wang\thanks{School of Mathematical Sciences, Xiamen University, Xiamen, Fujian, China. Email: wwywang@xmu.edu.cn}\,\,\,\,\,\,\,\,
Kaixin Yan\thanks{School of Mathematical Sciences, Xiamen University, Xiamen, Fujian, China. Email: kaixinyan@stu.xmu.edu.cn}
\,\,\,\,\,\,\,\,Xiang Yu\thanks{Department of Applied Mathematics, The Hong Kong Polytechnic University, Hung Hom,
Kowloon, Hong Kong. E-mail: xiang.yu@polyu.edu.hk}
}
\date{
}

\date{\vspace{-0.7in}}

\maketitle

\begin{abstract}
This paper studies De Finetti's optimal dividend problem with capital injection under spectrally positive Markov additive models. Based on dynamic programming principle, we first study an auxiliary singular control problem with a final payoff at an exponential random time. The double barrier strategy is shown to be optimal and the optimal barriers are characterized in analytical form using fluctuation identities of spectrally positive L\'{e}vy processes. We then transform the original problem under spectrally positive Markov additive models into an equivalent series of local optimization problems with the final payoff at the regime-switching time. The optimality of the regime-modulated double barrier strategy can be confirmed for the original problem using results from the auxiliary problem and the fixed point argument for recursive iterations.

\ \\
\textbf{Keywords:} Spectrally positive L\'{e}vy process, regime switching, De Finetti's optimal dividend, capital injection, double barrier strategy, singular control\\
\ \\
\vspace{-0.2in}
\noindent\textbf{Mathematical Subject Classification (2020)}:  60G51,\,\,\,93E20,\,\,\,91G80
\end{abstract}

\vspace{-0.2in}

\section{Introduction}
\vspace{-0.2in}
De Finetti's optimal dividend problem with capital injection has become a fast-growing research topic in insurance and corporate finance. The goal of the optimal control problem is to maximize the expected net present value (NPV) of dividends when the shareholders inject capital whenever necessary over an infinite horizon to bail out the company from ruin. In particular, abundant studies can be found when underlying risk processes follow general spectrally positive or spectrally negative L\'evy processes; see among \cite{Avram07}, \cite{Avanzi11}, \cite{Bay13}, \cite{ZWYC15}, \cite{ZCY17}, \cite{ZWY}, \cite{YP2017}, \cite{YP17b}, \cite{PYY2018}, \cite{NPYY}, \cite{WW22} and references therein.

On the other hand, the regime-switching model has been widely used thanks to its capability to capture the changes or transitions of market trends. A large amount of empirical studies on regime switching can be found in the literature; see, for example, \cite{Ham}, \cite{So}, \cite{AngBeK02b}, \cite{Pelle}, \cite{AngTim}. In the context of optimal dividend with regime switching, we refer to a short list of \cite{JiaPis2012}, \cite{Azcue15}, \cite{WeiWY} and \cite{ZY2016}. Recently, the optimality of a regime-modulated refraction-reflection strategy is verified in the optimal dividend problem with capital injection under spectrally negative Markov additive processes in  \cite{NPY2020} when the dividend process is assumed to be absolutely continuous with a bounded rate. The spectrally negative Markov additive process can be understood as a family of spectrally negative L\'{e}vy processes switching via an independent Markov chain, in which a negative jump occurs when there is a regime change. The jump size random variable is independent of the family of L\'evy processes and the Markov chain, which can be understood as the cost for the insurance company to get adapted to the new regime. To encode the Chapter 11 bankruptcy, the optimality of a barrier strategy in the optimal singular dividend control problem is addressed in \cite{WYZ21} under spectrally negative L\'evy processes with endogenous regime switching and Parisian ruin with exponential delay. However, it remains an open problem whether the optimal dividend control fits the barrier type when the risk process follows the spectrally positive Markov additive process.

The present paper fills this gap, and we verify that the regime-modulated double barrier strategy attains the optimality among all singular dividend and capital injection controls under the spectrally positive Markov additive process. To be more precise, within each regime state $i$, we can find a positive barrier $b_i^*>0$ such that: (i) when the risk process exceeds the barrier $b_i^*$, the company pays a dividend so that the surplus process reflects at the level $b_i^*$; (ii) when the risk process falls below $0$, the capital is injected to bail out the risk process from ruin. Our methodology is based on fluctuation identities of spectrally positive  L\'evy processes and fixed point arguments for recursive iterations induced by dynamic programming. Comparing with \cite{NPY2020}, we stress that our problem differs substantially as the dividend control process is not absolutely continuous and we work with spectrally positive L\'evy processes. Distinct computations and proofs are required to handle the auxiliary optimal singular control problem with a final payoff. In addition, contrary to \cite{NPY2020}, our value functions under the spectrally positive Markov additive process are unbounded due to upward jumps, causing some new difficulties in the fixed point augments. We also note that, in the auxiliary optimal dividend problem with a final payoff, the value function and the optimal barrier can be expressed in a more concise way comparing with the results in \cite{NPY2020} thanks to different properties of spectrally positive L\'evy processes.

The rest of the paper is organized as follows. In Section \ref{sec:model}, we formulate the optimal dividend and capital injection problem under the spectrally positive Markov additive process and introduce some preliminaries of spectrally positive L\'{e}vy processes. In Section \ref{sec:aux}, we study an auxiliary optimal dividend and capital injection problem with a final payoff at an exponential terminal time. The optimality of a double-barrier strategy is verified using fluctuation identities of spectrally positive L\'{e}vy processes and smooth-fit principle. In Section \ref{sec:opt}, based on dynamic programming arguments, we prove the optimality of the regime-modulated double barrier strategy in the original control problem using fixed point arguments for recursive iterations and some results from the auxiliary control problem.

\section{Problem Formulation under the Spectrally Positive Markov Additive Process}\label{sec:model}
\subsection{Problem Formulation}
\vspace{-0.2in}
Let us consider the risk process modelled by the \textit{so-called} spectrally positive Markov additive process $\{(X_{t},Y_t);t\geq 0\}$; see, for example, the detailed introduction in Section XI of \cite{Asmussen03}. Here, $\{Y_t;t\geq 0\}$ is a continuous time Markov chain with finite state space $\mathcal{E}$ and the generator matrix $(q_{ij})_{i,j\in\mathcal{E}}$. Condition on that Markov chain $Y$ is in the state $i$, the process $X$ evolves as a spectrally
positive L\'evy process $X^{i}$ until the Markov chain $Y$ switches to another state $j\neq i$, at which instant there is a downward jump in $X$ with a random amount $J_{ij}$. We assume that $(X^{i})_{i\in\mathcal{E}}$, $Y$, and $(J_{ij})_{i,j\in\mathcal{E}}$ are mutually independent and are defined on a filtered probability space $(\Omega,\mathcal{F},\{\mathcal{F}_{t};t\geq 0\},\mathrm{P})$ satisfying the usual condition. Let us denote $\mathrm{P}_{x,i}$ the law of the process $\{(X_{t},Y_t);t\geq 0\}$ conditioning on $\{X_{0}=x,Y_{0}=i\}$.

We consider a bail-out dividend control problem in this Markov additive framework, where the beneficiaries of dividends are supposed to injects capitals into the surplus process so that the resulting surplus process are always non-negative, i.e., bankruptcy never occurs. To this end, let $X$ denote the underlying risk process before dividends are deducted and capitals are injected into. We consider two non-decreasing, right-continuous, adapted processes $\{D_{t};t\geq 0\}$ and $\{R_{t};t\geq 0\}$ defined on $(\Omega,\mathcal{F},\{\mathcal{F}_{t};t\geq 0\},\mathrm{P})$, which, respectively, represent the cumulative amount of dividends and injected capitals with $D_{0-}=R_{0-}=0$. The surplus process after taking into account the dividends and capital injection is defined by $U_{t}:=X_{t}-D_{t}+R_{t}, t\geq 0$. The value function of the de Finetti's dividend control problem with capital injection is defined by
\begin{eqnarray}
\label{goal}
V(x,i)\hspace{-0.2cm}&=&\hspace{-0.2cm}
\sup_{D,R}\mathrm{E}_{x,i}\left[\int_{0}^{\infty}e^{-\int_{0}^{t}\delta_{Y_s}\mathrm{d}s}\mathrm{d}D_{t}-\phi\int_{0}^{\infty}e^{-\int_{0}^{t}\delta_{Y_s}\mathrm{d}s}\mathrm{d}R_{t}\right]\notag\\
\hspace{-0.2cm}&&\hspace{-0.2cm}
\text{subject to\ \  $U_{t}=X_{t}-D_{t}+R_{t}\geq 0$ for all $t\geq 0$,}
\nonumber\\
\hspace{-0.2cm}&&\hspace{-0.2cm}
\text{both $D_{t}$ and $R_{t}$ are non-decreasing, c\`{a}dl\`{a}g and adapted processes,}
\nonumber\\
\hspace{-0.2cm}&&\hspace{-0.2cm}
\text{$D_{0-}=R_{0-}=0$, and $\int_{0}^{\infty}e^{-\int_{0}^{t}\delta_{Y_s}\mathrm{d}s}\mathrm{d}R_{t}<\infty$, $\mathrm{P}_{x,i}$-almost surely},
\end{eqnarray}
where $(\delta_{i}) \in (0,\infty)^{\mathcal{E}}$ is a discounting rate function that switches according to the economic environment depicted by the Markov chain $Y$, and $\phi>1$ depicts the cost per unit capital injected. Our goal is to find the optimal strategy $(D^*,R^*)$ that attains the value functions $(V(x,i))_{i\in\mathcal{E}}$.

Following the similar proof of Proposition $3.4$ in \cite{NPY2020}, we can readily obtain the following dynamic programming result for the value function starting from the regime state $i$, and its proof is hence omitted.

\begin{prop}
For $x\in\mathbb{R}$ and $i\in\mathcal{E}$, we have that
\begin{align}\label{dpp}
V(x,i)=\sup_{D,R}\mathrm{E}_{x,i}\left [ \int_{0}^{e_{\lambda_{i}}}e^{-\int_{0}^{t}\delta_{Y_s}\mathrm{d}s}\mathrm{d}D_{t}-\phi\int_{0}^{e_{\lambda_{i}}}e^{-\int_{0}^{t}\delta_{Y_s}\mathrm{d}s}\mathrm{d}R_{t}+e^{-\int_0^{e_{\lambda_{i}}}\delta_{Y_s}ds}V(U_{e_{\lambda_{i}}}, Y_{e_{\lambda_{i}}})\right],
\end{align}
where $e_{\lambda_{i}}$ is the first time $Y$ switches the regime state under $\mathrm{P}_{x,i}$.

\end{prop}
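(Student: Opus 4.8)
The plan is to prove the two inequalities in \eqref{dpp} separately, exploiting the strong Markov property of the spectrally positive Markov additive process $(X,Y)$ at the first regime-switch time $e_{\lambda_i}$. Recall that, under $\mathrm{P}_{x,i}$, the time $e_{\lambda_i}$ is exponentially distributed with rate $\lambda_i:=-q_{ii}=\sum_{j\neq i}q_{ij}$ and independent of $X^i$, that $Y_s\equiv i$ on $\{s<e_{\lambda_i}\}$ so that $e^{-\int_0^t\delta_{Y_s}\mathrm{d}s}=e^{-\delta_i t}$ there, and that the post-switch process $(X_{e_{\lambda_i}+\cdot},Y_{e_{\lambda_i}+\cdot})$ is again a spectrally positive Markov additive process issued from $(X_{e_{\lambda_i}},Y_{e_{\lambda_i}})$, where $X_{e_{\lambda_i}}$ already incorporates the downward jump $J_{ij}$ occurring at the switch. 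As a preliminary step I would record an a priori bound ensuring $V(x,i)<\infty$ for every $(x,i)$, obtained from a crude upper estimate on the net present value of dividends in terms of the underlying L\'evy building blocks, so that all the expectations and interchanges below are legitimate.

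For the inequality ``$\le$'', fix any admissible $(D,R)$ for the problem started at $(x,i)$. Splitting each integral in \eqref{goal} at $e_{\lambda_i}$ and factoring the discount $e^{-\int_0^{e_{\lambda_i}}\delta_{Y_s}\mathrm{d}s}$ out of the tail part, I would condition on $\mathcal{F}_{e_{\lambda_i}}$ and apply the strong Markov property. Since the time-shifted restriction of $(D,R)$ to $[e_{\lambda_i},\infty)$ is an admissible control for the problem started from $(X_{e_{\lambda_i}},Y_{e_{\lambda_i}})$ with surplus $U_{e_{\lambda_i}}$, the conditional expectation of the tail is at most $V(U_{e_{\lambda_i}},Y_{e_{\lambda_i}})$. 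Hence the payoff of $(D,R)$ is no larger than the bracketed expression in \eqref{dpp} evaluated at $(D,R)$, and taking the supremum over all admissible $(D,R)$ yields ``$\le$''.

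For ``$\ge$'', fix $\varepsilon>0$. For each state $(y,j)$ choose an $\varepsilon$-optimal admissible control $(D^{y,j},R^{y,j})$ for the problem started at $(y,j)$; a measurable selection argument ensures this can be arranged so that the concatenated control constructed next is adapted and c\`{a}dl\`{a}g. Given an arbitrary admissible $(D,R)$ on $[0,e_{\lambda_i})$, define a new control that agrees with $(D,R)$ on $[0,e_{\lambda_i})$ and, from $e_{\lambda_i}$ onwards, runs the time-shifted control $(D^{U_{e_{\lambda_i}},Y_{e_{\lambda_i}}},R^{U_{e_{\lambda_i}},Y_{e_{\lambda_i}}})$; one checks it is admissible, in particular that the constraint $\int_0^\infty e^{-\int_0^t\delta_{Y_s}\mathrm{d}s}\mathrm{d}R_t<\infty$ is inherited $\mathrm{P}_{x,i}$-a.s. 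Its payoff is at least
\[
\mathrm{E}_{x,i}\Bigl[\int_0^{e_{\lambda_i}}e^{-\int_0^t\delta_{Y_s}\mathrm{d}s}(\mathrm{d}D_t-\phi\,\mathrm{d}R_t)+e^{-\int_0^{e_{\lambda_i}}\delta_{Y_s}\mathrm{d}s}\bigl(V(U_{e_{\lambda_i}},Y_{e_{\lambda_i}})-\varepsilon\bigr)\Bigr],
\]
so letting $\varepsilon\downarrow0$ and taking the supremum over $(D,R)$ on $[0,e_{\lambda_i})$ gives ``$\ge$''.

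The routine parts are the bookkeeping of discount factors and the interchange of expectation with the splitting at $e_{\lambda_i}$. The main obstacle I anticipate is in the ``$\ge$'' direction: producing the $\varepsilon$-optimal controls jointly measurably in the initial data $(y,j)$ and verifying that the concatenation stays admissible — adapted, right-continuous, and with almost surely finite discounted injected capital — which is exactly where care is needed regarding the downward jump of $X$ at $e_{\lambda_i}$ and the convention for dividends or injections occurring precisely at the switch time. This is the content of the argument underlying Proposition 3.4 of \cite{NPY2020}, which is why the details are omitted here.
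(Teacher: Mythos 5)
Your proposal is correct and follows the standard dynamic programming argument that the paper itself invokes by referencing Proposition 3.4 of \cite{NPY2020} (the paper omits the proof entirely for this reason). The two-inequality structure — conditioning on $\mathcal{F}_{e_{\lambda_i}}$ and using strong Markov for ``$\le$'', concatenating an arbitrary pre-switch control with a measurably selected family of $\varepsilon$-optimal post-switch controls for ``$\ge$'' — is exactly the argument the authors have in mind, and you correctly flag the two points requiring care (joint measurability of the $\varepsilon$-optimal selection, and the convention for the jump $J_{iY_{e_{\lambda_i}}}$ and any control action occurring precisely at the switch time).
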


The next theorem is the main result of this paper, which confirms the optimality of a regime-modulated double barrier strategy for the stochastic control problem \eqref{goal}, whose proof is deferred to Section \ref{sec:opt}.

\begin{thm}
\label{thm4.1}
There exists a function $\mathbf{b}^{*}=(b_{i}^{*})_{i\in\mathcal{E}}\in(0,\infty)^{\mathcal{E}}$ such that the double barrier dividend and capital injection strategy with the dynamic upper reflection barrier $b^{*}_{Y_{t}}$ and fixed lower reflection barrier $0$ is optimal that attains the value function in \eqref{goal} that
$$V_{0,\mathbf{b}^{*}}(x,i)=V(x,i),\quad (x,i)\in\mathbb{R}_{+}\times \mathcal{E},$$
where $V_{0,\mathbf{b}^{*}}(x,i)$ represents the value function of the double barrier dividend and capital injection strategy with upper barrier $b^{*}_{Y_{t}}$ and lower barrier $0$.
\end{thm}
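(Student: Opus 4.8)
The plan is to reduce the Markov additive problem to a coupled system of auxiliary single-regime problems of the type solved in Section~\ref{sec:aux}, and then to run a fixed point argument on the vector of candidate value functions. First I would fix a regime state $i\in\mathcal{E}$ and condition on the path until the first regime switch $e_{\lambda_i}$. By the dynamic programming identity \eqref{dpp}, the value function $V(\cdot,i)$ coincides with the value of an auxiliary singular dividend/capital-injection problem driven by the spectrally positive L\'evy process $X^i$, run up to the independent exponential time $e_{\lambda_i}$, with a terminal payoff given by $\mathrm{E}\big[V(U_{e_{\lambda_i}}-J_{iY_{e_{\lambda_i}}}, Y_{e_{\lambda_i}})\,\big|\,\text{switch to }Y_{e_{\lambda_i}}\big]$. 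Provided the terminal payoff function inherits the regularity required in Section~\ref{sec:aux} (nondecreasing, suitably concave/Lipschitz, with the correct growth), the auxiliary result yields a double barrier $b_i^*>0$ and an explicit expression for $V(\cdot,i)$ in terms of the scale functions of $X^i$ and of the terminal payoff.

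The second step is to set up the fixed point. Let $\mathcal{V}$ be the space of vectors $(v_i)_{i\in\mathcal{E}}$ of functions on $\mathbb{R}_+$ satisfying the structural properties established for the auxiliary value function (monotonicity, a one-sided Lipschitz/slope bound coming from the capital-injection cost $\phi$, and the linear growth forced by the upward jumps of $X$). Define an operator $\mathcal{T}$ on $\mathcal{V}$ by letting $\mathcal{T}(v)_i$ be the auxiliary value function in regime $i$ with terminal payoff built from $(v_j)_{j\in\mathcal{E}}$ and the jump laws $J_{ij}$ as above. I would show $\mathcal{T}$ maps $\mathcal{V}$ into itself — this uses the qualitative conclusions of Section~\ref{sec:aux}, in particular that the auxiliary value function preserves the relevant shape and growth — and that $\mathcal{T}$ is a contraction (or at least a monotone operator admitting a maximal fixed point) with respect to a weighted sup-norm that discounts the linear growth, the contraction constant being controlled by the interplay between $\delta_i>0$, the switching rates $\lambda_i$, and the fact that $\phi>1$ keeps injections costly. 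The unique fixed point $\mathbf{v}^*=(v_i^*)$ then furnishes the candidate value functions and the candidate barriers $\mathbf{b}^*=(b_i^*)$.

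The third step is the verification. I would show that the regime-modulated double barrier strategy with upper barriers $b_i^*$ and lower barrier $0$ generates exactly the value $v_i^*$: concatenating the auxiliary-problem computations across successive regime intervals and using the strong Markov property at the switching times, the payoff telescopes and matches $v_i^*$ by construction of the fixed point, so $V_{0,\mathbf{b}^*}(x,i)\ge$ (in fact $=$) $v_i^*(x)$. Conversely, $v_i^*$ is shown to dominate $V(\cdot,i)$: using $C^1$ (smooth-fit) regularity of $v_i^*$ at $b_i^*$ and its behaviour near $0$, one builds the associated regime-switching HJB variational inequality, applies an Itô/change-of-variables formula along an arbitrary admissible $(D,R)$, and obtains that $e^{-\int_0^t\delta_{Y_s}ds}v_{Y_t}^*(U_t)$ plus the running dividend/injection terms is a supermartingale; optional stopping and letting $t\to\infty$ (the growth control and the integrability constraint on $R$ making the limit terms vanish) give $v_i^*(x)\ge V(x,i)$. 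Combining the two inequalities yields $V_{0,\mathbf{b}^*}=v^*=V$.

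The main obstacle I anticipate is the fixed point step in the presence of \emph{unbounded} value functions: unlike the absolutely-continuous, bounded setting of \cite{NPY2020}, here the upward jumps of $X^i$ force linear growth, so the naive sup-norm contraction fails. The delicate point is to identify the right weighted norm (or a suitable order-theoretic argument on a complete lattice of functions with prescribed growth) in which $\mathcal{T}$ is a contraction, and to verify that the auxiliary value function's growth constant does not worsen under $\mathcal{T}$ — i.e., that the terminal payoff's linear growth is exactly reproduced, not amplified, by the auxiliary optimization. A secondary technical point is checking that the terminal payoff $x\mapsto \mathrm{E}[v_j^*(x-J_{ij})]$ satisfies all hypotheses (monotonicity, slope bound $\le 1$ away from $0$ and $\ge \phi$-type constraints near $0$, continuity) needed to invoke the Section~\ref{sec:aux} result — the downward jump $J_{ij}$ must be shown to preserve, rather than destroy, these properties.
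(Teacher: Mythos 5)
Your plan follows the paper's route closely: both use the dynamic programming identity \eqref{dpp} to reduce to single-regime auxiliary problems of the type solved in Section~\ref{sec:aux}, invoke Theorem~\ref{thm.3.1} to produce the optimal barrier per regime, and close by a fixed-point argument for the Bellman operator $T_{\sup}$ (your $\mathcal{T}$). Two points of divergence are worth flagging. First, where you anticipate needing a weighted sup-norm (or an order-theoretic argument) to tame the linear growth, the paper works on the translated space $\mathcal{B}=\{f:\,f(x,i)-x\text{ bounded}\}$ with the \emph{plain} sup-metric $\rho(f,g)=\max_i\sup_x|f(x,i)-g(x,i)|$: since every admissible value function grows with unit slope at infinity (Lemma~\ref{lem.4.0}), the difference of any two elements of $\mathcal{B}$ is automatically bounded, and the ordinary contraction estimate $\rho(T_{\mathbf{b}}f,T_{\mathbf{b}}g)\leq\beta\,\rho(f,g)$ holds with $\beta=\max_i \lambda_i/(\delta_i+\lambda_i)<1$ (note the constant depends only on $\delta_i,\lambda_i$, not on $\phi$). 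This sidesteps your stated worry about whether the auxiliary optimization amplifies a growth constant --- it does not, because every function in play has the same leading-order slope $1$. Second, your Step~3 (a global regime-switching HJB and supermartingale verification for $v^*\ge V$) is superfluous in the paper's scheme: $V$ is a fixed point of $T_{\sup}$ by Proposition \eqref{dpp}, $V\in\mathcal{C}$ (Lemma~\ref{pro4.1}) so Theorem~\ref{thm.3.1} gives $\mathbf{b}^{V}$ with $T_{\sup}V=T_{\mathbf{b}^{V}}V$, and the uniqueness of the fixed point of $T_{\mathbf{b}^{V}}$ on $\mathcal{B}$ (Lemmas~\ref{lem4.1} and~\ref{lem4.2}) then yields $V=V_{0,\mathbf{b}^{V}}$ directly; the It\^{o}/supermartingale work has already been carried out once and for all per-regime in the verification Lemma~\ref{lem2.7}. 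Finally, a small but substantive point your sketch elides: the terminal payoff is not $\mathrm{E}[v_j^*(x-J_{ij})]$ but $\widehat{f}(x,i)$, which encodes the immediate cost-$\phi$ capital injection when the regime-switch jump overshoots $0$; that concavity and the slope bound $1\leq\widehat{f}^{\prime}\leq\phi$ survive this operation (so that $\mathcal{C}$ is $T_{\sup}$-invariant and Theorem~\ref{thm.3.1} remains applicable at each iteration) is precisely the content of Lemma~\ref{lem4.3}, and without it the recursion would not close.
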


\subsection{Some preliminaries of spectrally positive L\'{e}vy processes}

Let $X=(X_t)_{t\geq 0}$ be a L\'{e}vy process defined on a probability space $(\Omega, \mathcal{F},\mathrm{P})$. For $x\in\mathbb{R}$, we denote by $\mathrm{P}_x$ the law of $X$ starting from $x$ and write $\mathrm{E}_x$ the associated expectation. We also use $\mathrm{P}$ and $\mathrm{E}$ in place of $\mathrm{P}_0$ and $\mathrm{E}_0$. The L\'{e}vy process $X$ is said to be \textit{spectrally positive} if it has no negative jumps and it is not a subordinator. The Laplace exponent $\psi: [0,\infty)\rightarrow \mathbb{R}$ satisfying
\begin{align*}
\mathrm{E}[e^{-\theta X_t}]=:e^{\psi(\theta)t}, \quad t,\theta\geq 0,
\end{align*}
is given by the L\'{e}vy-Khintchine formula that
\begin{align*}
\psi(\theta):=\gamma \theta+\frac{\sigma^2}{2}\theta^2+\int_{(0,\infty)}(e^{-\theta z}-1+\theta z\mathbf{1}_{\{z<1\}})\upsilon(dz), \quad \theta\geq 0,
\end{align*}
where $\gamma\in\mathbb{R}$, $\sigma\geq 0$, and $\upsilon$ is the L\'{e}vy measure of $X$ on $(0,\infty)$ that satisfies
\begin{align*}
\int_{(0,\infty)}(1\wedge z^2)\upsilon(dz)<\infty.
\end{align*}
It is well-known that $X$ has paths of bounded variation if and only if $\sigma=0$ and $\int_{(0,1)}z\upsilon(dz)<\infty$; in this case, we have
\begin{align*}
X_t=-ct+S_t,\quad t\geq 0,
\end{align*}
where
\begin{align*}
c:=\gamma+\int_{(0,1)}z\upsilon(dz),
\end{align*}
and $(S_t)_{t\geq 0}$ is a driftless subordinator. As we have ruled out the case that $X$ has monotone paths, it holds that $c>0$. Its Laplace exponent is given by
\begin{align*}
\psi(\theta)=c\theta+\int_{(0,\infty)}(e^{-\theta z}-1)\upsilon(dz),\quad \theta \geq 0.
\end{align*}
To exclude the trivial case, it is assumed throughout the paper that
\begin{align*}
\mathrm{E}[X_1]=-\psi'(0+)<\infty.
\end{align*}

Let us also recall the $q$-scale function for the spectrally positive L\'{e}vy process $X$. For $q>0$, 
the $q$-scale function $W_{q}:\mathbb{R}\rightarrow [0,\infty)$
is continuous and strictly increasing on $(0,\infty)$ and takes value zero on $(-\infty,0)$ with its Laplace transform on $[0,\infty)$ given by
\begin{align*}
\int_0^{\infty} e^{-sx}W_{q}(x)dx=\frac{1}{\psi(s)-q},\quad s>\Phi(q),
\end{align*}
where $\Phi(q):=\sup\{s\geq 0: \psi(s)=q\}$. We also define $Z_{q}(x)$ by
\begin{align*}
Z_{q}(x):=1+q\int_0^x W_{q}(y)dy,\quad x\in\mathbb{R},
\end{align*}
and its anti-derivative
\begin{align*}
\overline{Z}_{q}(x):=\int_0^x Z_{q}(y)dy,\quad x\in\mathbb{R}.
\end{align*}
We recall that if $X$ has paths of bounded variation, $W_q(x)\in C^1((0,\infty))$ if and only if the L\'{e}vy measure $\upsilon$ has no atoms. If $X$ has paths of unbounded variation, we have that $W_q(x)\in C^1((0,\infty))$. Moreover, if $\sigma>0$, we have $W_q(x)\in C^2((0,\infty))$. Hence, we have that $Z_q(x)\in C^1((0,\infty))$, $\overline{Z}_q(x)\in C^1(\mathbb{R})$ and $\overline{Z}_q(x) \in C^2((0,\infty))$ for bounded variation case; and we have $Z_q(x)\in C^1(\mathbb{R})$, $\overline{Z}_q(x) \in C^2(\mathbb{R})$ and $\overline{Z}_q(x)\in C^3((0,\infty))$ for the unbounded variation case. We also know that
\begin{align*}
\begin{split}
W_{q} (0+) &= \left\{ \begin{array}{ll} 0 & \textrm{if $X$ is of unbounded
variation,} \\ 1/c & \textrm{if $X$ is of bounded variation.}
\end{array} \right.
\end{split}
\end{align*}

Let us define $\tau_{a}^{-}:=\inf\{t\geq 0; X_{t}<a\}$ and $\tau_{b}^{+}:=\inf\{t\geq 0; X_{t}>b\}$. Then, for $b\in(0,\infty)$ and $x\in[0,b]$, we have
\begin{eqnarray}
\label{fluc.1}
\mathrm{E}_{x}\left[e^{-q \tau_{0}^{-}}\mathbf{1}_{\{\tau_{0}^{-}<\tau_{b}^{+}\}}\right]
\hspace{-0.3cm}&=&\hspace{-0.3cm}
\frac{W_{q}(b-x)}{W_{q}(b)},
\\
\label{fluc.2}
\mathrm{E}_{x}\left[e^{-q \tau_{b}^{+}}\mathbf{1}_{\{\tau_{b}^{+}<\tau_{0}^{-}\}}\right]
\hspace{-0.3cm}&=&\hspace{-0.3cm}
Z_{q}(b-x)-\frac{Z_{q}(b)}{W_{q}(b)}W_{q}(b-x).
\end{eqnarray}

\section{Auxiliary Optimal Dividend Problem with A Final Payoff}\label{sec:aux}

In this section, we first consider an auxiliary optimal dividend and capital injection problem with a final payoff at an independent exponential terminal time in a single spectrally positive L\'{e}vy model. Let $X_t$ be the underlying risk process that follows a single spectrally positive L\'{e}vy process and let $(D_t, R_t)_{t\geq 0}$ denote nondecreasing, right continuous and $\mathcal{F}_t$-adapted dividend and capital injection control processes starting from zero. The controlled surplus process $U_t$ is defined by $U_t:=X_t-D_t+R_t$ and it is required that $U_t\geq 0$ a.s. for all $t\geq 0$. Let $\omega(x)$ be a final payoff function and the terminal time exponential random variable is denoted by $e_{\lambda}$ with parameter $\lambda$.

Throughout this section, we assume that the payoff function $\omega$ is continuous and concave over $[0, \infty)$ with
$\omega^{\prime}_{+}(0+)\leq \phi$ and $\omega^{\prime}_{+}(\infty)\in[0,1]$ where $\omega^{\prime}_{+}(x)$ denotes
the right derivative of $\omega$ at $x$.
For $\delta>0$, $\lambda>0$ and $q=\delta+\lambda$, the expected net present value (NPV) of the dividends and capital injections with a final payoff at the random time $e_{\lambda}$ is defined by
\begin{eqnarray}\label{lap.pari.T}
V_{D,R}^{\omega}(x)\hspace{-0.3cm}&:=&\hspace{-0.3cm}
\mathrm{E}_{x}\bigg[\int_0^{e_\lambda}e^{-\delta t}\mathrm{d}D_t-
\phi \int_{0}^{e_\lambda}e^{-\delta t}\mathrm{d} R_t+e^{-\delta e_\lambda}\omega(U_{e_\lambda})\bigg]
\nonumber\\
\hspace{-0.3cm}&=&\hspace{-0.3cm}
\mathrm{E}_{x}\bigg[\int_{0}^{\infty}\lambda e^{-\lambda s}
\bigg[\int_0^{s}e^{-\delta t}\mathrm{d}D_t-
\phi\int_0^{s}e^{-\delta t}\mathrm{d}R_t+e^{-\delta s}\omega(U_{s})\bigg]
\mathrm{d}s\bigg]
\nonumber\\
\hspace{-0.3cm}&=&\hspace{-0.3cm}
\mathrm{E}_{x}\bigg[\int_0^{\infty}e^{-q t}\mathrm{d}D_t-\phi\int_0^{\infty}e^{-q t}\mathrm{d}R_t+\lambda\int_{0}^{\infty} e^{-q t}\omega(U_{t})
\mathrm{d}t\bigg].
\end{eqnarray}
The value function of the auxiliary stochastic control problem is then given by
\begin{eqnarray}\label{valfun}
V_{D^*,R^*}^{\omega}(x)=\sup_{D,R} V_{D,R}^{\omega}(x)
\hspace{-0.3cm}&&\hspace{-0.3cm}\,\,\,
\text{subject to\ \  $U_{t}=X_{t}-D_{t}+R_{t}\geq 0$ for all $t\geq 0$,}
\nonumber\\
\hspace{-0.3cm}&&\hspace{-0.3cm}\,\,\,
\text{both $D_{t}$ and $R_{t}$ are non-decreasing, c\`{a}dl\`{a}g and adapted processes,}
\nonumber\\
\hspace{-0.3cm}&&\hspace{-0.3cm}\,\,\,
\text{$D_{0-}=R_{0-}=0$, and $\int_0^{\infty}e^{-q t}\mathrm{d}R_t<\infty$, $\mathrm{P}_{x}$-almost surely}.
\end{eqnarray}

We first follow some heuristic arguments to locate the optimal singular control in some smaller subset of admissible dividend and capital injection strategies. In fact, due to the time value of money (i.e., $q>0$), it seems reasonable to inject capitals as late as possible. In addition, due to the transaction costs charged for each unit of capitals injected ($\phi>1$), whenever capitals injection is required, the injected capital should be the amount to keep the surplus process non-negative, i.e., the surplus process will reflect from below at $0$. The above intuitive arguments motivate the following Lemma \ref{R.continu.}. The proof is essentially similar to that of Lemma 4.2 in \cite{WW22} and is hence omitted.

\begin{lem}
\label{R.continu.}
The optimal dividend and capital injection process $\{(D_{t}, R_{t});t\geq 0\}$ for the optimization problem \eqref{valfun} is such that $0\leq \Delta D_{t}\leq X_{t-}$ and
\begin{eqnarray}\label{R.form}
R_{t}=-\inf_{s\leq t}(X_{s}-D_{s})\wedge 0.
\end{eqnarray}
In particular, $\{R_{t};t\geq 0\}$ is continuous.
\end{lem}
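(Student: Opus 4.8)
The plan is to start from an arbitrary admissible pair $(D,R)$ for \eqref{valfun} and improve it in two successive steps until it has the asserted structure, each step keeping the objective in \eqref{lap.pari.T} nondecreasing and strictly increasing it unless the structure is already present. Two preliminary observations will be used throughout: by concavity together with $\omega'_{+}(0+)\le\phi$ and $\omega'_{+}(\infty)\in[0,1]$, the payoff $\omega$ is nondecreasing and $\phi$-Lipschitz on $[0,\infty)$; and for any adapted, right-continuous process $A$ of locally bounded variation with $A_{0-}=0$ and $e^{-qt}A_t\to0$, integration by parts gives $\int_{[0,\infty)}e^{-qt}\,\mathrm dA_t=q\int_0^{\infty}e^{-qt}A_t\,\mathrm dt$. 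We work with the third (infinite-horizon) representation of $V_{D,R}^{\omega}$ in \eqref{lap.pari.T}.

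\emph{Step 1 (the capital injection should be minimal).} Given admissible $(D,R)$, set $\widetilde R_t:=-\inf_{s\le t}(X_s-D_s)\wedge0$ and $\widetilde U_t:=X_t-D_t+\widetilde R_t\ge0$. The requirement $U_s\ge0$ for all $s\le t$, together with $R$ nondecreasing and $R_{0-}=0$, forces $R_t\ge\widetilde R_t$ pathwise, so $(D,\widetilde R)$ is again admissible (the integrability constraint transfers via $\widetilde R\le R$ and the identity above). With $A:=R-\widetilde R=U-\widetilde U\ge0$, the properties of $\omega$ give $0\le\omega(U_t)-\omega(\widetilde U_t)\le\phi A_t$, so that
\begin{align*}
V_{D,R}^{\omega}(x)-V_{D,\widetilde R}^{\omega}(x)&=\mathrm E_x\Big[-\phi\int_{[0,\infty)}e^{-qt}\,\mathrm dA_t+\lambda\int_0^{\infty}e^{-qt}\big(\omega(U_t)-\omega(\widetilde U_t)\big)\,\mathrm dt\Big]\\
&\le(\lambda-q)\phi\,\mathrm E_x\Big[\int_0^{\infty}e^{-qt}A_t\,\mathrm dt\Big]=-\delta\phi\,\mathrm E_x\Big[\int_0^{\infty}e^{-qt}A_t\,\mathrm dt\Big]\le0,
\end{align*}
where we used $q=\delta+\lambda$. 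Hence one may assume $R=\widetilde R$, i.e.\ $R$ has the form \eqref{R.form}; then $R$ increases only on $\{t:U_t=0\}$.

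\emph{Step 2 (no overpayment of dividends).} Suppose $(D,R)$ is optimal with $R$ of the form \eqref{R.form}, and assume for contradiction that $\mathrm P_x(\sigma<\infty)>0$, where $\sigma:=\inf\{t\ge0:\Delta D_t>U_{t-}\}$. On $\{\sigma<\infty\}$, modify the control by paying, at time $\sigma$, the dividend $U_{\sigma-}$ in place of $\Delta D_\sigma$ (keeping every other dividend increment) and letting $R'$ be the injection that reflects $X-D'$ at $0$. Since $\Delta D_\sigma>U_{\sigma-}$ means the original surplus overshoots into a fresh running infimum of $X-D$ while the modified one lands exactly at the previous running infimum, a pathwise computation shows that $X-D'$ coincides with $X-D$ shifted up by the constant $c_\sigma:=\Delta D_\sigma-U_{\sigma-}>0$ on $[\sigma,\infty)$; hence $U'_t\equiv U_t$ and $R'_t\equiv R_t-c_\sigma$ on $[\sigma,\infty)$. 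Thus $(D',R')$ is admissible with $D'$ nondecreasing, the running-payoff term is unchanged, and the objective changes only through the time-$\sigma$ increments, which satisfy $\Delta D'_\sigma-\Delta D_\sigma=\Delta R'_\sigma-\Delta R_\sigma=-c_\sigma$; therefore
\[
V_{D',R'}^{\omega}(x)-V_{D,R}^{\omega}(x)=(\phi-1)\,\mathrm E_x\big[c_\sigma\,e^{-q\sigma}\mathbf 1_{\{\sigma<\infty\}}\big]>0,
\]
contradicting optimality. Hence $\Delta D_t\le U_{t-}$ for all $t$, $\mathrm P_x$-a.s. Finally, with $R$ of the form \eqref{R.form} a jump of $R$ at time $t$ is possible only when $X_t-D_t$ falls strictly below $\inf_{s<t}(X_s-D_s)$, that is, only when $\Delta D_t>U_{t-}$ (recall $X$ has no negative jumps); this having been excluded, $R$ is continuous, which is the last assertion.

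\emph{Main obstacle.} The crux is the pathwise analysis in Step 2: one must check carefully that the single local correction at $\sigma$ propagates forward so that $U$ — and hence every later dividend and injection increment together with the running-payoff integral — is left unchanged, and that $D'$ remains nondecreasing while $R'$ retains the reflected form \eqref{R.form} and the integrability constraint; this requires separate bookkeeping in the bounded- and the unbounded-variation regime, and in the two cases according to whether the running infimum of $X-D$ has already dropped below $0$ before time $\sigma$. Making ``$\sigma$ is the first overpayment time'' rigorous (attainment of the infimum, measurability), and — if one prefers to repair all overpayments simultaneously rather than arguing by contradiction at the first — carrying out the attendant Skorokhod-type construction of $D'$, are the remaining technical hurdles; these are exactly the points handled in the proof of Lemma 4.2 of \cite{WW22}.
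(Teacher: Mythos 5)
Your Step 1 is sound: the comparison with the reflected injection $\widetilde R_t=-\inf_{s\le t}(X_s-D_s)\wedge 0$, the monotonicity and $\phi$-Lipschitz property of $\omega$, and the integration-by-parts reduction to $(\lambda-q)\phi=-\delta\phi\le 0$ together give a clean one-shot improvement argument (modulo the routine check that $e^{-qt}A_t\to 0$, which follows from the admissibility constraint $\int e^{-qt}\,\mathrm dR_t<\infty$).

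Step 2, however, has a genuine gap. You take $\sigma=\inf\{t:\Delta D_t>U_{t-}\}$ and replace the dividend jump at $\sigma$ by $U_{\sigma-}$, claiming this leaves $U$ unchanged and saves $\phi c_\sigma$ of injection while losing only $c_\sigma$ of dividends. That bookkeeping is correct \emph{only if} the original dividend jump actually triggered a simultaneous injection of size $c_\sigma$, i.e.\ only if $\Delta D_\sigma>U_{\sigma-}+\Delta X_\sigma$. Since $X$ is spectrally positive, $\Delta X_\sigma$ can be strictly positive, and on the event $U_{\sigma-}<\Delta D_\sigma\le U_{\sigma-}+\Delta X_\sigma$ the reflected $R$ does not jump at $\sigma$ at all: your modification then strictly reduces dividends with no compensating injection saving, and moreover $U'_\sigma=\Delta X_\sigma\neq U_{\sigma-}+\Delta X_\sigma-\Delta D_\sigma=U_\sigma$, so the running-payoff term also changes. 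In fact the stronger bound $\Delta D_t\le U_{t-}$ that your argument purports to establish is \emph{false} for the candidate optimal double-barrier strategy itself: there $\Delta D^{(0,b)}_t=(U^{(0,b)}_{t-}+\Delta X_t-b)^+$, which exceeds $U^{(0,b)}_{t-}$ whenever $\Delta X_t>b$. The first "bad" time should instead be $\sigma=\inf\{t:\Delta D_t>U_{t-}+\Delta X_t\}$, which (given the reflected form of $R$ from Step 1) is exactly the first jump time of $R$; the corrected dividend at $\sigma$ is $U_{\sigma-}+\Delta X_\sigma$, and then $c_\sigma=\Delta D_\sigma-(U_{\sigma-}+\Delta X_\sigma)=\Delta R_\sigma$, $U'\equiv U$, $D'-D\equiv R'-R\equiv -c_\sigma$ on $[\sigma,\infty)$, and the gain $(\phi-1)\mathrm E_x[c_\sigma e^{-q\sigma}\mathbf 1_{\{\sigma<\infty\}}]>0$ goes through as you intended. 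This also shows that the natural bound in the lemma is $\Delta D_t\le U_{t-}+\Delta X_t$ (the pre-dividend, post-jump surplus), not $\Delta D_t\le U_{t-}$; the $X_{t-}$ in the displayed statement should be read in this sense (the raw L\'evy process $X_{t-}$ can be negative, so the literal reading makes no sense). Note finally that the paper does not supply its own proof here, referring to Lemma~4.2 of the cited Wang et al.\ (2022) instead, so your proposal is being judged on its own merits rather than against an in-paper argument.
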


Thanks to Lemma \ref{R.continu.}, we are able to restrict ourselves to continuous capital injection process of the form \eqref{R.form} when characterizing the optimal dividend control to the problem \eqref{valfun}.
To continue, motivated by many existing studies in optimal dividend problems; see, for example, \cite{Azcue15}, \cite{Avram07}, \cite{YP17b}, \cite{PYY2018}, \cite{Loe08}, \cite{Loe09}, \cite{WW22}, \cite{ZCY17}, we conjecture that the optimal singular dividend control in   \eqref{valfun} also fits the barrier type. Recall that a barrier dividend strategy with the barrier $b\in(0,\infty)$ refers to the singular dividend control process that any excessive surplus above $b$ is deducted such that the resulting surplus process reflects from above at the level $b$.

Based on our conjecture, let us first work with those dividend and capital injection strategies when dividends are paid according to an upper barrier $b$ and capitals are injected according to the form \eqref{R.form}, which we shall refer as the double barrier strategy with barriers $(0,b)$, denoted by $(D_t^{(0,b)}, R_t^{(0,b)})_{t\geq 0}$. That is, under a double barrier strategy  $(D_t^{(0,b)}, R_t^{(0,b)})_{t\geq 0}$, the controlled surplus process will be reflected from above at $b$ whenever it is to up-cross the level $b$, and will be reflected from below at $0$ whenever it is to down-cross the level $0$. Let us also denote by $U^{(0,b)}:=X_t-D_t^{(0,b)}+R_t^{(0,b)}$ the resulting surplus process. The expected NPV with a final payoff under a double barrier strategy is defined by
\begin{align}\label{expdouble}
V^{\omega}_{0,b}(x):=\mathrm{E}_{x}\bigg[\int_0^{\infty}e^{-q t}\mathrm{d}D^{(0,b)}_t-\phi\int_0^{\infty}e^{-q t}\mathrm{d}R^{(0,b)}_t+\lambda\int_{0}^{\infty} e^{-q t}\omega(U^{(0,b)}_{t})
\mathrm{d}t\bigg].
\end{align}
In our previous conjecture, the value function \eqref{valfun} can be attained by a double barrier strategy with some particularly chosen $0$ and $b^{\omega}$, i.e. $V_{D^*,R^*}^{\omega}(x)=V^{\omega}_{0,b^{\omega}}(x)$. To verify this conjecture and characterize $b^{\omega}$, let us first express the expected NPV in \eqref{expdouble} with an arbitrary upper reflecting barrier $b\in(0,\infty)$.

\begin{prop}
\label{prop2.1}
For a given $b\in(0,\infty)$, the expected NPV in \eqref{expdouble} can be written as
\begin{eqnarray}
\label{v0b(x)}
V_{0,b}^{\omega}(x)
\hspace{-0.3cm}&=&\hspace{-0.3cm}
\left\{
\small
\begin{aligned}
&
-
\overline{Z}_{q}(b-x)
-\frac{\psi^{\prime}(0+)}{q}
+
\frac{\lambda}{q}\bigg[\omega(0)+
\int_{0}^{b}\omega_{+}^{\prime}(y)
Z_{q}(y-x)\mathrm{d}y\bigg]&
\\&
+\frac{Z_{q}(b-x)}{qW_{q}(b)}
\Big[Z_{q}(b)-\phi
-\lambda\int_{0}^{b}\omega_{+}^{\prime}(y)W_{q}(y)
\mathrm{d}y\Big],& x\in[0,b],\\
&x-b+V_{0,b}^{\omega}(b),& x\in(b,\infty),\\
&\phi x+V_{0,b}^{\omega}(0), &x\in(-\infty,0).
\end{aligned}
\right.
\end{eqnarray}
\end{prop}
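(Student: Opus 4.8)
The plan is to compute the expected NPV in \eqref{expdouble} by splitting the surplus dynamics at the exponential clock and exploiting the excursion/reflection structure of the double barrier strategy together with the fluctuation identities \eqref{fluc.1}--\eqref{fluc.2}. First I would treat the three regions $x<0$, $x>b$, and $x\in[0,b]$ separately. For $x>b$, under the double barrier strategy an immediate dividend of size $x-b$ is paid to bring the surplus down to $b$, so $V_{0,b}^{\omega}(x)=x-b+V_{0,b}^{\omega}(b)$; for $x<0$, an immediate capital injection of size $-x$ (at cost $\phi$ per unit, by Lemma \ref{R.continu.} and \eqref{R.form}) brings the surplus up to $0$, giving $V_{0,b}^{\omega}(x)=\phi x+V_{0,b}^{\omega}(0)$. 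The substance is the case $x\in[0,b]$.

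For $x\in[0,b]$, I would decompose the cumulative discounted payoff into its three constituent pieces: the discounted dividends $\mathrm{E}_x[\int_0^\infty e^{-qt}\mathrm{d}D_t^{(0,b)}]$, the discounted capital injections $-\phi\,\mathrm{E}_x[\int_0^\infty e^{-qt}\mathrm{d}R_t^{(0,b)}]$, and the running payoff $\lambda\,\mathrm{E}_x[\int_0^\infty e^{-qt}\omega(U_t^{(0,b)})\mathrm{d}t]$. The standard tool here is the known resolvent density and the expected discounted dividends/injections for the doubly reflected spectrally positive L\'evy process on $[0,b]$, all of which are expressible through $W_q$, $Z_q$, and $\overline{Z}_q$; these follow from \eqref{fluc.1}--\eqref{fluc.2} by an excursion-theoretic or strong-Markov iteration between the two barriers. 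Concretely, the expected discounted dividends paid at the upper barrier equal $W_q(b-x)/W_q'(b)$-type expressions (here written via $Z_q(b-x)/(qW_q(b))$ after the appropriate reflection bookkeeping for a spectrally positive rather than negative process), the expected discounted injections at $0$ produce the $-\overline{Z}_q(b-x)-\psi'(0+)/q$ terms together with a multiple of $Z_q(b-x)/(qW_q(b))$, and the running-payoff term, after an integration by parts in $\omega$ against the reflected resolvent measure, yields $\frac{\lambda}{q}[\omega(0)+\int_0^b \omega_+'(y)Z_q(y-x)\mathrm{d}y]$ plus a correction $-\frac{\lambda}{qW_q(b)}Z_q(b-x)\int_0^b\omega_+'(y)W_q(y)\mathrm{d}y$. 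Collecting the coefficients of $Z_q(b-x)/(qW_q(b))$ gives exactly the bracketed factor $Z_q(b)-\phi-\lambda\int_0^b\omega_+'(y)W_q(y)\mathrm{d}y$ in \eqref{v0b(x)}.

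To make the running-payoff computation rigorous, I would use the concavity and the assumed regularity of $\omega$ to write $\omega(U_t)=\omega(0)+\int_0^{U_t}\omega_+'(y)\mathrm{d}y=\omega(0)+\int_0^b \omega_+'(y)\mathbf{1}_{\{U_t>y\}}\mathrm{d}y$ (valid since $U_t^{(0,b)}\in[0,b]$), interchange expectation and the $\mathrm{d}y$-integral by Tonelli, and then evaluate $\lambda\int_0^\infty e^{-qt}\mathrm{P}_x(U_t^{(0,b)}>y)\mathrm{d}t$ using the q-resolvent of the doubly reflected process; this last quantity is a classical identity giving a combination of $Z_q(y-x)$ and $Z_q(b-x)W_q(y)/W_q(b)$. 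The main obstacle I anticipate is getting the spectrally positive (as opposed to spectrally negative) fluctuation bookkeeping exactly right: the roles of $W_q$ and $Z_q$, the sign of the reflection at $0$ versus at $b$, and the precise form of the doubly reflected resolvent all need to be assembled carefully from \eqref{fluc.1}--\eqref{fluc.2} and their resolvent consequences, and one must verify the boundary identifications (continuity at $x=0$ and $x=b$, and that the $x>b$ and $x<0$ pieces glue consistently with the $[0,b]$ formula) so that the three-line expression in \eqref{v0b(x)} is internally coherent. Once the reflected resolvent and the expected discounted dividend/injection functionals are in hand, the remainder is algebraic simplification using $Z_q'=qW_q$ and $\overline{Z}_q{}'=Z_q$.
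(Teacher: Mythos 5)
Your proposal follows essentially the same route as the paper: split into the three regions, invoke the known $q$-resolvent of the doubly reflected spectrally positive L\'evy process and the expected discounted dividend/injection functionals (the paper cites Theorem 1 of Pistorius (2003) and (4.3)--(4.4) of Avram et al. (2007) for these), then convert the $\omega$-integral into an $\omega_+'$-integral by integration by parts (equivalently your Tonelli rewriting of $\omega(U_t)=\omega(0)+\int_0^b\omega_+'(y)\mathbf{1}_{\{U_t>y\}}\mathrm{d}y$) and collect the coefficients of $Z_q(b-x)/(qW_q(b))$. The only difference is that you contemplate rederiving the reflected resolvent and boundary functionals from \eqref{fluc.1}--\eqref{fluc.2} directly, whereas the paper simply cites the existing identities; that is extra work but not a different argument.
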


\begin{proof}
By Theorem 1 in \cite{Pistorius03}, one has
\begin{eqnarray}
\label{pot.dou.ref.}
\mathrm{E}_{x}\bigg[\int_{0}^{\infty} e^{-q t}\omega(U^{(0,b)}_{t})
\mathrm{d}t\bigg]
\hspace{-0.3cm}&=&\hspace{-0.3cm}
\int_{0}^{b}\omega(y)\bigg[\frac{Z_{q}(b-x)W_{q}^{\prime+}(y)}{qW_{q}(b)}-W_{q}(y-x)\bigg]\mathrm{d}y
\nonumber\\
\hspace{-0.3cm}&&\hspace{-0.3cm}
+\omega(0)\frac{Z_{q}(b-x)W_{q}(0+)}{qW_{q}(b)},\quad x\in[0,b].
\end{eqnarray}
Using (4.3) and (4.4) of \cite{Avram07}, we can get that
\begin{eqnarray}
&&\mathrm{E}_{x}\bigg[\int_0^{\infty}e^{-q t}\mathrm{d}D^{(0,b)}_t\bigg]=
-\overline{Z}_{q}(b-x)-\frac{\psi^{\prime}(0+)}{q}+\frac{Z_{q}(b)}{qW_{q}(b)}Z_{q}(b-x)
,\quad x\in[0,b],
\\
&&\mathrm{E}_{x}\bigg[\int_0^{\infty}e^{-q t}\mathrm{d}R^{(0,b)}_t\bigg]=
\frac{Z_{q}(b-x)}{qW_{q}(b)},\quad x\in[0,b].
\label{exp.cap.dou.bar.}
\end{eqnarray}
Considering \eqref{pot.dou.ref.}-\eqref{exp.cap.dou.bar.} and then rearranging terms,  we deduce that
\begin{eqnarray}
V_{0,b}^{\omega}(x)
\hspace{-0.3cm}&=&\hspace{-0.3cm}
-\overline{Z}_{q}(b-x)-\frac{\psi^{\prime}(0+)}{q}+\frac{Z_{q}(b)}{qW_{q}(b)}Z_{q}(b-x)-\frac{\phi Z_{q}(b-x)}{qW_{q}(b)}
\nonumber\\
\hspace{-0.3cm}&&\hspace{-0.3cm}
+\lambda\left[\frac{\omega(0)}{q}-\int_{0}^{b}\omega_{+}^{\prime}(y)\left[\frac{Z_{q}(b-x)W_{q}(y)}{qW_{q}(b)}-\frac{Z_{q}(y-x)}{q}\right]\mathrm{d}y\right],
\end{eqnarray}
which is the desired result.
\end{proof}

\begin{lem}\label{lem2.1}
Let us define $b^{\omega}\in(0,\infty)$ as the unique solution of the equation
\begin{eqnarray}
\label{smoothcond.1}
Z_{q}(x)-\phi
-\lambda\int_{0}^{x}\omega_{+}^{\prime}(y)W_{q}(y)
\mathrm{d}y=0.
\end{eqnarray}
We have $b^{\omega}>\sup\{x\geq0; q-\lambda\omega_{+}^{\prime}(x)\leq 0\}\vee Z_{q}^{-1}(\phi)>0$.
Then $V_{0,b^{\omega}}^{\omega}(x)$ is continuously differentiable on $(-\infty,\infty)$.
Furthermore, if $X$ has paths of unbounded variation, $V_{0,b^{\omega}}^{\omega}(x)$ is twice continuously differentiable on $(0,\infty)$.
\end{lem}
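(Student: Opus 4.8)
The plan is to establish first the existence and uniqueness of the root $b^{\omega}$ of \eqref{smoothcond.1}, and then to verify the smoothness of $V_{0,b^{\omega}}^{\omega}$ by checking that the pieces in \eqref{v0b(x)} glue together in a $C^1$ (resp.\ $C^2$) fashion at the junction points $x=0$ and $x=b^{\omega}$.

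For the root, I would study the function $g(x):=Z_{q}(x)-\phi-\lambda\int_{0}^{x}\omega_{+}^{\prime}(y)W_{q}(y)\,\mathrm{d}y$ on $[0,\infty)$. At $x=0$ we have $g(0)=1-\phi<0$ since $\phi>1$. Differentiating, $g'(x)=qW_{q}(x)-\lambda\omega_{+}^{\prime}(x)W_{q}(x)=W_{q}(x)\big(q-\lambda\omega_{+}^{\prime}(x)\big)$ for a.e.\ $x$ (using $Z_{q}'=qW_{q}$ and the fact that $\omega$ is concave so $\omega_{+}^{\prime}$ is nonincreasing, hence of bounded variation and the integral is well defined). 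Because $\omega_{+}^{\prime}$ is nonincreasing with $\omega_{+}^{\prime}(\infty)\in[0,1]$, the quantity $q-\lambda\omega_{+}^{\prime}(x)$ is nondecreasing in $x$ and eventually becomes and stays positive (indeed $q-\lambda\omega_{+}^{\prime}(\infty)=\delta+\lambda(1-\omega_{+}^{\prime}(\infty))>0$); set $x_0:=\sup\{x\ge 0: q-\lambda\omega_{+}^{\prime}(x)\le 0\}$, which is finite. Thus $g$ is nonincreasing on $[0,x_0]$ and strictly increasing on $(x_0,\infty)$ (as $W_{q}>0$ there), so it has at most one zero. For existence one shows $g(x)\to\infty$: on $(x_0,\infty)$, $g'(x)\ge W_{q}(x)\,\varepsilon_0$ for a constant $\varepsilon_0>0$ once $q-\lambda\omega_{+}^{\prime}(x)\ge\varepsilon_0$, and $\int^{\infty}W_{q}=\infty$, so $g$ increases without bound. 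Hence a unique root $b^{\omega}\in(x_0,\infty)$ exists; it also exceeds $Z_{q}^{-1}(\phi)$ because $g(b^{\omega})=0$ forces $Z_{q}(b^{\omega})=\phi+\lambda\int_0^{b^{\omega}}\omega_{+}^{\prime}(y)W_{q}(y)\,\mathrm{d}y\ge\phi$ (the integrand is nonnegative since $\omega_{+}^{\prime}\ge\omega_{+}^{\prime}(\infty)\ge 0$), and $Z_{q}$ is increasing; and $b^{\omega}>0$ since $x_0\vee Z_{q}^{-1}(\phi)\ge 0$ and actually $g(0)<0$.

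For the $C^1$ property, I would differentiate the top branch of \eqref{v0b(x)} on $(0,b^{\omega})$ using $\overline{Z}_{q}'=Z_{q}$, $Z_{q}'=qW_{q}$, $W_{q}\in C^1((0,\infty))$ or its bounded-variation analogue, to obtain
\[
(V_{0,b^{\omega}}^{\omega})'(x)=Z_{q}(b^{\omega}-x)-\frac{\lambda}{q}\int_{0}^{b^{\omega}}\omega_{+}^{\prime}(y)\,qW_{q}(y-x)\,\mathrm{d}y+\frac{qW_{q}(b^{\omega}-x)}{qW_{q}(b^{\omega})}\Big[Z_{q}(b^{\omega})-\phi-\lambda\int_{0}^{b^{\omega}}\omega_{+}^{\prime}(y)W_{q}(y)\,\mathrm{d}y\Big].
\]
Here the defining equation \eqref{smoothcond.1} makes the bracketed factor vanish, so the last term drops out; evaluating at $x\uparrow b^{\omega}$ gives $(V_{0,b^{\omega}}^{\omega})'(b^{\omega}-)=Z_{q}(0)-0=1$, matching the slope $1$ of the branch $x-b^{\omega}+V^{\omega}_{0,b^{\omega}}(b^{\omega})$ on $(b^{\omega},\infty)$; this is exactly the smooth-fit at $b^{\omega}$, and it is the reason \eqref{smoothcond.1} was chosen. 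At $x=0$ one evaluates $(V_{0,b^{\omega}}^{\omega})'(0+)$ and compares with the slope $\phi$ of the branch $\phi x+V^{\omega}_{0,b^{\omega}}(0)$ on $(-\infty,0)$; using $W_{q}(0-)=0$ and $Z_{q}(0)=1$ one checks the two one-sided derivatives agree (this is where the hypothesis $\omega_{+}^{\prime}(0+)\le\phi$ enters, ensuring the computed value equals $\phi$). Continuity of $V_{0,b^{\omega}}^{\omega}$ across the junctions is immediate from the way the lower branches are defined. For the unbounded-variation case, $W_{q}\in C^1((0,\infty))$ (and $\overline{Z}_q\in C^3((0,\infty))$, $Z_q\in C^1(\mathbb R)$) lets one differentiate the above expression once more on $(0,b^{\omega})$, and the only junction inside $(0,\infty)$ is $b^{\omega}$; differentiating the displayed formula for $(V_{0,b^{\omega}}^{\omega})'$ and letting $x\uparrow b^{\omega}$, the surviving terms produce $(V_{0,b^{\omega}}^{\omega})''(b^{\omega}-)=0$, which matches the second derivative $0$ of the linear branch, giving $C^2$ on $(0,\infty)$.

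The main obstacle is handling the integral term $\int_0^{b^{\omega}}\omega_{+}^{\prime}(y)W_{q}(y-x)\,\mathrm{d}y$ with the sole regularity $\omega$ concave (so $\omega_{+}^{\prime}$ merely nonincreasing, possibly discontinuous): one must justify differentiation under the integral sign and the boundary contributions carefully, splitting the integral at $y=x$ since $W_{q}$ vanishes on $(-\infty,0)$ and using $W_{q}(0+)=0$ (unbounded variation) or $=1/c$ (bounded variation) to track the jump of the integrand's support endpoint. A clean way is to write $\int_0^{b^{\omega}}\omega_{+}^{\prime}(y)W_{q}(y-x)\,\mathrm{d}y=\int_x^{b^{\omega}}\omega_{+}^{\prime}(y)W_{q}(y-x)\,\mathrm{d}y$ and integrate by parts (Stieltjes, since $\omega_{+}^{\prime}$ has bounded variation on compacts) to move derivatives onto the scale function, after which the $C^1$/$C^2$ bookkeeping reduces to the known smoothness of $W_{q}$, $Z_{q}$, $\overline{Z}_{q}$ recalled in Section \ref{sec:model}. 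The verification at $x=0$ similarly requires care with $W_q(0+)$, but no genuinely new difficulty arises.
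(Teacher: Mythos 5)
Your overall strategy mirrors the paper exactly: establish existence and uniqueness of the root via the derivative $\ell'(x)=W_q(x)\bigl(q-\lambda\omega_+'(x)\bigr)$ together with $\ell(0)=1-\phi<0$, then check the $C^1$ (resp.\ $C^2$) pasting at $x=0$ and $x=b^{\omega}$ using the explicit formula for $(V^{\omega}_{0,b^{\omega}})'$. The existence and uniqueness half is correct and essentially identical to the paper.

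However, there is one genuine error in the smooth-fit analysis at $x=0$. You write that ``this is where the hypothesis $\omega_{+}^{\prime}(0+)\le\phi$ enters, ensuring the computed value equals $\phi$.'' That is not the case, and no such hypothesis is needed here. After the bracketed factor is killed by \eqref{smoothcond.1}, the remaining expression at $x=0$ is $Z_q(b^{\omega})-\lambda\int_0^{b^{\omega}}\omega_+'(y)W_q(y)\,\mathrm{d}y$, which equals $\phi$ again by the defining equation \eqref{smoothcond.1} itself; in fact, if one keeps the last term, the identity $(V^{\omega}_{0,b})'(0+)=\phi$ holds for every $b>0$, since substituting $x=0$ into \eqref{V'(x)} makes the $Z_q(b)$ and $\lambda\int$ contributions cancel algebraically. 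The assumption $\omega_{+}^{\prime}(0+)\le\phi$ is used later, in Lemma \ref{lem2.2}, to obtain the bound $(V^{\omega}_{0,b^{\omega}})'\le\phi$ over all of $(0,\infty)$ via the representation \eqref{V'0b*(x)}; it plays no role in the $C^1$ pasting at the origin. You also have a sign slip in the displayed derivative (the coefficient of the bracket should be $-W_q(b^{\omega}-x)/W_q(b^{\omega})$), but since you immediately use \eqref{smoothcond.1} to annihilate that term, this does not affect your conclusion. Apart from these two points the argument, including the $C^2$ check at $b^{\omega}$ for the unbounded-variation case using $W_q(0+)=0$ and the bound $\bigl|\int_0^{b^{\omega}}\omega_+'(y)W_q'(y-x)\,\mathrm{d}y\bigr|\le\omega_+'(0+)W_q(b^{\omega}-x)\to 0$, is sound and matches the paper.
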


\begin{proof} We first verify that \eqref{smoothcond.1} indeed admits a unique solution on $(0,\infty)$. By the concavity of $\omega$ with 
$\omega_{+}^{\prime}(\infty)\in[0,1]$, the function
$$\ell(x):=Z_{q}(x)
-
\lambda
\int_{0}^{x}\omega_{+}^{\prime}(y)
W_{q}(y)\mathrm{d}y-\phi,\quad x\in[0,\infty),$$
is first decreasing and then strictly increasing in $x$ due to the fact that it's right derivative
$$\ell_+^{\prime}(x)= W_{q}(x)\left(q-\lambda\omega_{+}^{\prime}(x)\right),\quad x\in(0,\infty),$$
is first non-positive and then positive and tends to $\infty$ as $x$ goes to $\infty$.
As a consequence, one has $\ell(\infty)=\infty$, which combined with the fact that $\ell(0)=1-\phi<0$ yields that there should be a unique zero of $\ell(x)$, i.e., \eqref{smoothcond.1} has a unique solution $b^{\omega}\in(0,\infty)$.
In addition, it is easy to see from the above arguments and \eqref{smoothcond.1} that
$$b^{\omega}>\sup\{x\geq0; q-\lambda\omega_{+}^{\prime}(x)\leq 0\}\vee Z_{q}^{-1}(\phi)>0.$$
By Proposition \ref{prop2.1}, one can derive that
\begin{eqnarray}
\label{V'(x)}
V_{0,b}^{\omega\prime}(x)
\hspace{-0.3cm}&=&\hspace{-0.3cm}
\frac{W_{q}(b-x)}{W_{q}(b)}
\bigg[\phi
+\lambda\int_{0}^{b}\omega_{+}^{\prime}(y)W_{q}(y)
\mathrm{d}y-Z_{q}(b)\bigg]
\nonumber\\
\hspace{-0.3cm}&&\hspace{-0.3cm}
+Z_{q}(b-x)
-\lambda\int_{0}^{b}\omega_{+}^{\prime}(y)W_{q}(y-x)\mathrm{d}y, \quad x\in[0,b].
\end{eqnarray}
Combing \eqref{V'(x)} and the facts that $b^{\omega}$ is the unique solution of \eqref{smoothcond.1} and $\lim\limits_{x\uparrow b}\int_{0}^{b}\omega_{+}^{\prime}(y)
W_{q}(y-x)\mathrm{d}y=\lim\limits_{x\uparrow b}\int_{x}^{b}\omega_{+}^{\prime}(y)
W_{q}(y-x)\mathrm{d}y=0$, we have that
\begin{eqnarray}
V_{0,b^{\omega}}^{\omega\prime}(b^{\omega}-)
\hspace{-0.3cm}&=&\hspace{-0.3cm}
1+\frac{W_{q}(0+)}{W_{q}(b^{\omega})}\bigg[-Z_{q}(b^{\omega})+\phi
+\lambda\int_{0}^{b^{\omega}}\omega_{+}^{\prime}(y)W_{q}(y)
\mathrm{d}y\bigg]=1=V_{0,b^{\omega}}^{\omega\prime}(b^{\omega}+),\nonumber
\end{eqnarray}
and
\begin{eqnarray}
V_{0,b^{\omega}}^{\omega\prime}(0+)
\hspace{-0.3cm}&=&\hspace{-0.3cm}
Z_{q}(b^{\omega})
-\lambda\int_{0}^{b^{\omega}}\omega_{+}^{\prime}(y)W_{q}(y-x)\mathrm{d}y=\phi,\nonumber
\end{eqnarray}
implying the continuous differentiability of $V_{0,b^{\omega}}^{\omega}(x)$ over $(-\infty,\infty)$. When $X$ has paths of unbounded variation, the scale function $W_{q}$ is continuously differentiable, and hence
\begin{eqnarray}
V_{0,b^{\omega}}^{\omega\prime\prime}(b^{\omega}+)-V_{0,b^{\omega}}^{\omega\prime\prime}(b^{\omega}-)
\hspace{-0.3cm}&=&\hspace{-0.3cm}
-\frac{W_{q}^{\prime}(0+)}{W_{q}(b^{\omega})}
\bigg[Z_{q}(b^{\omega})-\phi
-\lambda\int_{0}^{b^{\omega}}\omega_{+}^{\prime}(y)W_{q}(y)
\mathrm{d}y\bigg]
+q W_{q}(0+)
\nonumber\\
\hspace{-0.3cm}&&\hspace{-0.3cm}
-\lambda
\lim\limits_{x\uparrow b^{\omega}}\int_{0}^{b^{\omega}}\omega_{+}^{\prime}(y)
W_{q}^{\prime}(y-x)\mathrm{d}y
\nonumber\\
\hspace{-0.3cm}&=&\hspace{-0.3cm}0,\nonumber
\end{eqnarray}
where, in the second equality, we have used the fact that $\left|\int_{0}^{b^{\omega}}\omega_{+}^{\prime}(y)
W_{q}^{\prime}(y-x)\mathrm{d}y\right|\leq\omega_{+}^{\prime}(0^+)W_{q}(b^{\omega}-x) \rightarrow 0$ as $x\uparrow b^{\omega}$. As a result, $V_{0,b^{\omega}}^{\omega}(x)$ is twice continuously differentiable over $(0,\infty)$ when $X$ has paths of unbounded variation.
\end{proof}

Recall that $b^{\omega}\in (0,\infty)$ is the unique solution of \eqref{smoothcond.1}. The expected NPV $V_{0,b^{\omega}}^{\omega}(x)$ with the double barrier $(0,b^{\omega})$ can be reduced to
\begin{eqnarray}
\label{v0b*(x)}
V_{0,b^{\omega}}^{\omega}(x)
\hspace{-0.3cm}&=&\hspace{-0.3cm}
\left\{
\small
\begin{aligned}
&
-
\overline{Z}_{q}(b^{\omega}-x)
-\frac{\psi^{\prime}(0+)}{q}
+
\frac{\lambda}{q}\bigg[\omega(0)+
\int_{0}^{b^{\omega}}\omega_{+}^{\prime}(y)
Z_{q}(y-x)\mathrm{d}y\bigg],& x\in[0,b^{\omega}],\\
&x-b^{\omega}+V_{0,b^{\omega}}^{\omega}(b^{\omega}),& x\in(b^{\omega},\infty),\\
&\phi x+V_{0,b^{\omega}}^{\omega}(0),& x\in(-\infty,0).
\end{aligned}
\right.
\end{eqnarray}

\begin{lem}\label{lem2.2}
The expected NPV $V_{0,b^{\omega}}^{\omega}(x)$ is increasing and concave over $(-\infty,\infty)$. In addition, we have $(V_{0,b^{\omega}}^{\omega})'(x)=\phi$ for $x\in(-\infty,0]$, and $(V_{0,b^{\omega}}^{\omega})'(x)=1$ for $x\in[b^{\omega},\infty)$.
\end{lem}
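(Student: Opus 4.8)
The plan is to read everything off the closed form \eqref{v0b*(x)} together with the regularity already recorded in Lemma \ref{lem2.1}. On $(-\infty,0)$ and on $(b^{\omega},\infty)$ the function $V_{0,b^{\omega}}^{\omega}$ is affine with slopes $\phi$ and $1$, so $(V_{0,b^{\omega}}^{\omega})'\equiv\phi$ on $(-\infty,0)$ and $(V_{0,b^{\omega}}^{\omega})'\equiv1$ on $(b^{\omega},\infty)$; since $V_{0,b^{\omega}}^{\omega}\in C^{1}(\mathbb{R})$ by Lemma \ref{lem2.1}, these identities hold on $(-\infty,0]$ and $[b^{\omega},\infty)$ as asserted. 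It therefore remains to prove that $V_{0,b^{\omega}}^{\omega}$ is increasing and concave on $[0,b^{\omega}]$; concavity across the junction points $0$ and $b^{\omega}$ is then automatic, because the one-sided derivatives agree there and $\phi>1$.

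Differentiating \eqref{v0b*(x)} on $[0,b^{\omega}]$ and using that the bracketed coefficient of $W_{q}(b-x)/W_{q}(b)$ in \eqref{V'(x)} vanishes at $b=b^{\omega}$ by the defining equation \eqref{smoothcond.1}, one obtains the compact expression
\begin{align*}
(V_{0,b^{\omega}}^{\omega})'(x)=Z_{q}(b^{\omega}-x)-\lambda\int_{x}^{b^{\omega}}\omega_{+}^{\prime}(y)W_{q}(y-x)\,\mathrm{d}y=1+\int_{x}^{b^{\omega}}\big(q-\lambda\omega_{+}^{\prime}(y)\big)W_{q}(y-x)\,\mathrm{d}y,\qquad x\in[0,b^{\omega}],
\end{align*}
so that $(V_{0,b^{\omega}}^{\omega})'(b^{\omega})=1$ and, again by \eqref{smoothcond.1}, $(V_{0,b^{\omega}}^{\omega})'(0)=\phi$. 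The crucial point is that on $(0,b^{\omega})$ the function $V:=V_{0,b^{\omega}}^{\omega}$ solves the integro-differential equation $(\mathcal{L}-q)V+\lambda\omega=0$, where $\mathcal{L}$ is the generator of $X$ and $V$ is understood with its affine extension $V(y)=y-b^{\omega}+V(b^{\omega})$ for $y>b^{\omega}$ (the excess paid out as dividends upon an upward overshoot of $b^{\omega}$ turns the truncated jump kernel into the plain generator). This can be checked by Itô's formula applied to the doubly reflected controlled surplus, or directly from \eqref{v0b*(x)} using the standard scale-function identities. Since $\mathcal{L}$ commutes with differentiation, $(\mathcal{L}-q)(V')=-\lambda\omega_{+}^{\prime}$ on $(0,b^{\omega})$.

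Now argue by a non-local maximum principle. If $V'$ attained a value exceeding $\phi$, then, being $\equiv\phi$ on $(-\infty,0]$ and $\equiv1$ on $[b^{\omega},\infty)$, it would attain its global maximum at some interior $x^{*}\in(0,b^{\omega})$; there $\mathcal{L}V'(x^{*})\le0$ and $(\mathcal{L}-q)V'(x^{*})=-\lambda\omega_{+}^{\prime}(x^{*})$, whence $qV'(x^{*})\le\lambda\omega_{+}^{\prime}(x^{*})\le\lambda\omega_{+}^{\prime}(0+)\le\lambda\phi\le q\phi$, i.e. $V'(x^{*})\le\phi$, a contradiction. Hence $V'\le\phi$ on $\mathbb{R}$, and since $V'(0)=\phi$ this forces $V''(0+)\le0$; moreover a short computation from \eqref{v0b*(x)} gives $V''(b^{\omega}-)=-W_{q}(0+)\big(q-\lambda\omega_{+}^{\prime}(b^{\omega})\big)\le0$, the factor $q-\lambda\omega_{+}^{\prime}(b^{\omega})$ being positive because $b^{\omega}>\sup\{x\ge0:q-\lambda\omega_{+}^{\prime}(x)\le0\}$ by Lemma \ref{lem2.1} (and $W_{q}(0+)=0$ in the unbounded-variation case). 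Differentiating the equation once more, $w:=V''$ satisfies $(\mathcal{L}-q)w=-\lambda\,\mathrm{d}\omega_{+}^{\prime}\ge0$ on $(0,b^{\omega})$ (a non-negative measure, by concavity of $\omega$), while $w\equiv0$ on $(-\infty,0)\cup(b^{\omega},\infty)$ and $w(0+)\le0$, $w(b^{\omega}-)\le0$; a non-negative interior maximum of $w$ is then impossible by the same maximum principle, so $w=V''\le0$ on $(0,b^{\omega})$. Thus $V$ is concave on $(0,\infty)$, hence on $\mathbb{R}$; as $V'$ is non-increasing on $[0,b^{\omega}]$ with $V'(b^{\omega})=1$, we get $V'\ge1$ throughout, so $V$ is increasing as well.

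The main obstacle is precisely the concavity on the sub-interval $\{x\ge0:q<\lambda\omega_{+}^{\prime}(x)\}$ — which is non-empty in the application of Section \ref{sec:opt}, where the final payoff inherits $\omega_{+}^{\prime}(0+)=\phi>1$ — since there the pointwise formula for $V''$ has no definite sign, and one must instead invoke the global information carried by \eqref{smoothcond.1} (equivalently, the $C^{1}$-pasting at $b^{\omega}$ and the location $b^{\omega}>y_{0}$), which is exactly what feeds the maximum-principle argument above. A secondary point is regularity: the differentiations are legitimate in the unbounded-variation case ($W_{q}\in C^{1}$, and $C^{2}$ when $\sigma>0$) and in the bounded-variation case with atomless L\'evy measure ($W_{q}\in C^{1}((0,\infty))$, $W_{q}(0+)=1/c$, which only helps the signs); the general bounded-variation case then follows either by approximating $X$ with unbounded-variation processes and passing to the limit via continuity of scale functions, or by reading the maximum principle in the viscosity sense.
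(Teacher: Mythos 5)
Your route is genuinely different from the paper's, and although it correctly identifies the crux (the sign of $V''$ on the set where $q<\lambda\omega_{+}^{\prime}$, where the pointwise formula is inconclusive), it leaves a regularity gap that you do not close. The paper works entirely at the level of first derivatives: starting from the same compact expression $(V_{0,b^{\omega}}^{\omega})'(x)=Z_{q}(b^{\omega}-x)-\lambda\int_{0}^{b^{\omega}}\omega_{+}^{\prime}(y)W_{q}(y-x)\,\mathrm{d}y$, it invokes the identities of Pistorius (2004) for the reflected process $V_{t}^{b^{\omega}}=b^{\omega}-X_{t}-\inf_{s\le t}(b^{\omega}-X_{s})\wedge 0$ and its passage time $\sigma_{b^{\omega}}^{+}$ to rewrite this as
\begin{align*}
(V_{0,b^{\omega}}^{\omega})'(x)=\phi-\mathrm{E}_{x}\Big[\int_{0}^{\sigma_{b^{\omega}}^{+}}qe^{-qt}\Big(\phi-\tfrac{\lambda}{q}\,\omega_{+}^{\prime}(b^{\omega}-V_{t}^{b^{\omega}})\Big)\,\mathrm{d}t\Big].
\end{align*}
Pathwise, $V_{t}^{b^{\omega}}$ is non-increasing and $\sigma_{b^{\omega}}^{+}$ non-decreasing in the starting point $x$; combined with $\omega_{+}^{\prime}$ non-increasing and $\phi-\tfrac{\lambda}{q}\omega_{+}^{\prime}\ge 0$, the bracketed expectation is non-decreasing in $x$, so $(V_{0,b^{\omega}}^{\omega})'$ is non-increasing. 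This requires nothing beyond continuity and boundedness of $\omega_{+}^{\prime}$ and the $C^{1}$ scale-function regularity already in hand, and it never differentiates $V'$ at all.

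Your argument differentiates the Feynman--Kac equation $(\mathcal{A}-q)V+\lambda\omega=0$ once, and then again, and applies a non-local maximum principle at each stage. Both steps need more smoothness than Lemma~\ref{lem2.1} provides. When $\sigma>0$, the generator contains $\tfrac{1}{2}\sigma^{2}\partial_{xx}$, so evaluating $\mathcal{A}V'$ at an interior maximum of $V'$ requires $V'''$ at that point, and the second maximum principle for $w=V''$ requires $V''''$; even with $\sigma=0$ but unbounded variation, the second step needs $V'''$, which is not established (only $V\in C^{2}((0,\infty))$ is). The commutation $(\mathcal{A}V)'=\mathcal{A}(V')$ is itself only justified under the same extra regularity, and the right-hand side $-\lambda\,\mathrm{d}\omega_{+}^{\prime}$ of the twice-differentiated equation is a measure, so the pointwise maximum-principle inequality at $x^{*}$ needs an extra argument. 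You flag the bounded-variation/atomic case and gesture at viscosity or approximation as a fallback, but the argument as written does not go through in the generic unbounded-variation case either, and the approximation route would need to be carried out. The paper's stochastic-comparison proof sidesteps all of this; if you want to keep the analytic flavour, one option is to prove the monotonicity of $V'$ directly from the potential-density representation $(V_{0,b^{\omega}}^{\omega})'(x)=\phi\,\tfrac{Z_{q}(b^{\omega}-x)}{Z_{q}(b^{\omega})}+\lambda\int_{0}^{b^{\omega}}\omega_{+}^{\prime}(y)\big[\tfrac{Z_{q}(b^{\omega}-x)}{Z_{q}(b^{\omega})}W_{q}(y)-W_{q}(y-x)\big]\,\mathrm{d}y$, but this is essentially the paper's argument read analytically.
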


\begin{proof}
Define a process $\{V_{t}^{b^{\omega}};t\geq 0\}$ that
\begin{eqnarray}
\label{def.V}
V_{t}^{b^{\omega}}
\hspace{-0.3cm}&:=&\hspace{-0.3cm}
b^{\omega}-X_{t}-\inf_{0\leq s\leq t}(b^{\omega}-X_{s})\wedge 0,\quad t\geq 0,
\end{eqnarray}
which is the spectrally negative L\'evy process $\{b^{\omega}-X_{t};t\geq0\}$ reflected at its infimum.
In addition, denote
\begin{eqnarray}
\label{def.sigma}
\sigma_{b^{\omega}}^{+}:=\inf\{t\geq0; V_{t}^{b^{\omega}}\geq b^{\omega}\}=\inf\{t\geq 0; X_{t}-\sup_{0\leq s\leq t}(X_{s}-b^{\omega})\vee 0\leq 0\}.
\end{eqnarray}
Then, by Proposition 2 and Theorem 1 of \cite{Pistorius04}, we have that
\begin{eqnarray}
\label{Lap.sigma}
\mathrm{E}_{x}\left[e^{-q\sigma_{b^{\omega}}^{+}}\right]
\hspace{-0.3cm}&=&\hspace{-0.3cm}
\frac{Z_{q}(b^{\omega}-x)}{Z_{q}(b^{\omega})},
\\
\label{Pot.Y}
\int_{0}^{\infty}e^{-qt}\mathrm{P}_{x}\left(b^{\omega}-V_{t}^{b^{\omega}}\in\mathrm{d}y,\,t<\sigma_{b^{\omega}}^{+}\right)\mathrm{d}t
\hspace{-0.3cm}&=&\hspace{-0.3cm}
\bigg[\frac{Z_{q}(b^{\omega}-x)}{Z_{q}(b^{\omega})}W_{q}(y)-W_{q}(y-x)\bigg]\mathbf{1}_{[0,b^{\omega}]}(y)\mathrm{d}y.
\end{eqnarray}
By \eqref{v0b*(x)}, \eqref{Lap.sigma}, \eqref{Pot.Y} and the definition of $b^{\omega}$, we have that
\begin{eqnarray}
\label{V'0b*(x)}
\hspace{-0.3cm}
(V_{0,b^{\omega}}^{\omega})'(x)
\hspace{-0.3cm}&=&\hspace{-0.3cm}
Z_{q}(b^{\omega}-x)
-\lambda\int_{0}^{b^{\omega}}\omega_{+}^{\prime}(y)W_{q}(y-x)\mathrm{d}y
\nonumber\\
\hspace{-0.3cm}&=&\hspace{-0.3cm}
\bigg[\phi
+\lambda\int_{0}^{b^{\omega}}\omega_{+}^{\prime}(y)W_{q}(y)
\mathrm{d}y-Z_{q}(b^{\omega})\bigg]\frac{Z_{q}(b^{\omega}-x)}{Z_{q}(b^{\omega})}
+Z_{q}(b^{\omega}-x)
-\lambda\int_{0}^{b^{\omega}}\omega_{+}^{\prime}(y)W_{q}(y-x)\mathrm{d}y
\nonumber\\
\hspace{-0.3cm}&=&\hspace{-0.3cm}
\phi \frac{Z_{q}(b^{\omega}-x)}{Z_{q}(b^{\omega})}
+\lambda\int_{0}^{b^{\omega}}\omega_{+}^{\prime}(y)\left[
\frac{Z_{q}(b^{\omega}-x)}{Z_{q}(b^{\omega})}W_{q}(y)-W_{q}(y-x)\right]
\mathrm{d}y
\nonumber\\
\hspace{-0.3cm}&=&\hspace{-0.3cm}
\phi\mathrm{E}_{x}\left[e^{-q\sigma_{b^{\omega}}^{+}}\right]+\lambda
\int_{0}^{\infty}\omega_{+}^{\prime}(y)\int_{0}^{\infty}e^{-qt}\mathrm{P}_{x}\left(b^{\omega}-V_{t}^{b^{\omega}}\in\mathrm{d}y,t<\sigma_{b^{\omega}}^{+}\right)\mathrm{d}t
\nonumber\\
\hspace{-0.3cm}&=&\hspace{-0.3cm}
\phi-
\mathrm{E}_{x}\left[\int_{0}^{\sigma_{b^{\omega}}^{+}}qe^{-qt}\left(\phi-\frac{\lambda}{q}\omega_{+}^{\prime}(b^{\omega}-V_{t}^{b^{\omega}})\right)\mathrm{d}t\right].
\end{eqnarray}
By their definitions, we know that $V_{t}^{b^{\omega}}$ is non-increasing and $\sigma_{b^{\omega}}^{+}$ is non-decreasing with respect to the starting value $x$ of the process $X$, which combined with the concavity of $\omega$ results in the fact that the function $x\mapsto \phi-
\mathrm{E}_{x}\left[\int_{0}^{\sigma_{b^{\omega}}^{+}}qe^{-qt}\Big(\phi-\frac{\lambda}{q}\omega_{+}^{\prime}(b^{\omega}-V_{t}^{b^{\omega}})\Big)\mathrm{d}t\right]$ is non-increasing over $[0,b^{\omega}]$.
Hence, the desired result is verified.
\end{proof}


In our previous conjecture, the optimality of the control problem \eqref{valfun} can be attained by a double barrier dividend and capital injection strategy. To verify the optimality among all admissible dividend and capital injection singular controls, let us first characterize the optimal one among all admissible double barrier dividend and capital injection strategies. We then proceed to verify that the obtained optimal double barrier strategy is also the optimal control attaining the value function \eqref{valfun} among all admissible singular controls. With this two-step procedure in mind, we first establish the next result, which states that the double barrier strategy with the couple of barriers $(0,b^{\omega})$ is the optimal one among all double barrier dividend and capital injection strategies with barriers $(0,b)$ such that $b\in(0,\infty$), i.e., the value function $V_{0,b^{\omega}}^{\omega}(x)$ dominates the value function $V_{0,b}^{\omega}(x)$ for all $x\in(-\infty, \infty)$ and $b\neq b^{\omega}$ with $b\in(0,\infty)$.

\begin{lem}
\label{lem2.3}
Fix $b\neq b^{\omega}$. Recall that the expected NPV $V^{\omega}_{0,b}(x)$ is given by \eqref{v0b(x)}.
Let us consider
$g(x):=V_{0,b^{\omega}}^{\omega}(x)-V^{\omega}_{0,b}(x)$.
Then, $g(x)$ is non-decreasing over $(-\infty,\infty)$ and satisfies that $g(x)\geq 0$ for all $x\in(-\infty, \infty)$.
\end{lem}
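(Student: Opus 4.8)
The plan is to exploit the explicit formulas \eqref{v0b(x)} and \eqref{v0b*(x)} together with the smoothness and monotonicity properties already established in Lemmas \ref{lem2.1} and \ref{lem2.2}, and to compare $V^\omega_{0,b}$ with $V^\omega_{0,b^\omega}$ by differentiating $g$. First I would treat the three regions $(-\infty,0)$, $[0,b\wedge b^\omega]$ and the remaining pieces separately. On $(-\infty,0)$ both value functions have slope $\phi$, so $g$ is constant there and it suffices to control $g(0)$; for $x$ above the larger of the two barriers both have slope $1$, so again $g$ is constant there. Thus the whole problem reduces to the behaviour of $g$ on $[0,b\vee b^\omega]$, and by the already-known values of the one-sided derivatives at $0$ and at the barriers (all equal to $\phi$ at $0$, to $1$ at $b^\omega$, while $V^{\omega\prime}_{0,b}$ need not match $1$ at $b^\omega$ unless $b=b^\omega$) it is enough to show that $g$ is non-decreasing on this interval; then $g(x)\ge g(-\infty)$-type reasoning plus the normalization will give $g\ge 0$ once we check the value at one point.

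The key computational step is to obtain a clean expression for $g'(x)=(V^\omega_{0,b^\omega})'(x)-(V^\omega_{0,b})'(x)$ on $[0,b\wedge b^\omega]$. Using \eqref{V'(x)} for $V^{\omega\prime}_{0,b}$ and the representation of $(V^\omega_{0,b^\omega})'$ derived in the proof of Lemma \ref{lem2.2} (the probabilistic form involving $\sigma^+_{b^\omega}$ and the reflected process $V^{b^\omega}$), the integral terms $\lambda\int_0^{\,\cdot}\omega'_+(y)W_q(y-x)\,dy$ that depend on $x$ cancel in large part, and what remains should be a multiple of $W_q(\cdot-x)$ or of $Z_q(\cdot-x)$ whose coefficient is governed by the sign of $\ell(b)=Z_q(b)-\phi-\lambda\int_0^b\omega'_+(y)W_q(y)\,dy$. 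Since $b^\omega$ is precisely the zero of $\ell$ and $\ell$ is first decreasing then strictly increasing (from the proof of Lemma \ref{lem2.1}), we have $\ell(b)<0$ for $b<b^\omega$ and $\ell(b)>0$ for $b>b^\omega$; combined with the monotonicity of $x\mapsto W_q(b-x)/W_q(b)$ and of $Z_q(b-x)$ this should pin down $\mathrm{sgn}\,g'$ on each sub-interval. One has to deal with the two cases $b<b^\omega$ and $b>b^\omega$ separately because the interval $[0,b\wedge b^\omega]$ and the ``flat-slope'' tail regions swap roles; in the case $b<b^\omega$ one additionally needs the behaviour of $g$ on $[b,b^\omega]$, where $V^\omega_{0,b}$ already has slope $1$ while $(V^\omega_{0,b^\omega})'$ is still $\ge 1$ by concavity of $V^\omega_{0,b^\omega}$ and the value $1$ at $b^\omega$ — wait, concavity gives $(V^\omega_{0,b^\omega})'\ge 1$ on $[b,b^\omega]$, so $g'\ge 0$ there for free.

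Once $g$ is shown to be non-decreasing on all of $\mathbb{R}$, it remains to show $g\ge 0$. Because $g$ is constant on $(-\infty,0]$, it is enough to show $g(0)\ge 0$, equivalently $\lim_{x\to-\infty}g(x)\ge 0$; but monotonicity of $g$ alone does not settle the sign, so I would instead argue $g\ge 0$ from above: for $x$ large, $g(x)=g(b\vee b^\omega)$ is a constant, and $g$ non-decreasing forces $g(x)\le g(b\vee b^\omega)$ for all $x$ — that is the wrong direction. The correct closing argument is the optimality-type inequality: $V^\omega_{0,b^\omega}$ is, by construction via the smooth-fit equation \eqref{smoothcond.1}, the maximizer of $b\mapsto V^\omega_{0,b}(x)$ for each fixed $x$ — but proving that is exactly the content of this lemma. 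So instead I would directly evaluate $g$ at a convenient point, most naturally $x=b$ (if $b<b^\omega$) or $x=b^\omega$ (if $b>b^\omega$), where one value function is in its linear regime and the formulas simplify drastically, and verify $g\ge 0$ there by a short computation using \eqref{smoothcond.1}; non-decreasingness of $g$ on the side where $g$ might a priori be negative, together with this anchor value, then yields $g\ge 0$ everywhere.

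The main obstacle I anticipate is the bookkeeping in the derivative computation on $[0,b\wedge b^\omega]$: making the $x$-dependent integral terms cancel cleanly and isolating the single scale-function factor whose coefficient is $\ell(b)$, while correctly handling the two regimes $b\lessgtr b^\omega$ and the matching of derivatives across $0$, $b$ and $b^\omega$. The structural facts needed — sign of $\ell(b)$, monotonicity of $W_q(b-\cdot)/W_q(b)$ and $Z_q(b-\cdot)$, concavity of $V^\omega_{0,b^\omega}$ from Lemma \ref{lem2.2} — are all already available, so no genuinely new ideas should be required beyond careful algebra.
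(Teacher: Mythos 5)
Your derivative-based strategy is essentially the paper's: in the case $b>b^\omega$ the paper computes $g'(x)$ explicitly on $[0,b^\omega)$ and on $[b^\omega,b)$, and in each case reduces it to an integral of a nonnegative weight $q-\lambda\omega'_+(y)$ against the nonnegative kernel $\tfrac{W_q(b-x)}{W_q(b)}W_q(y)-W_q(y-x)$ (a resolvent density, hence $\geq0$); together with $g'\equiv0$ off $[0,b]$ this gives the monotonicity, and the argument you sketch (including the concavity shortcut on $[b,b^\omega]$ in the case $b<b^\omega$) is the same in spirit.

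However, your closing step has a genuine gap. Since $g$ is non-decreasing on all of $\mathbb{R}$ and constant on $(-\infty,0]$, the \emph{infimum} of $g$ is $g(0)$, so $g\geq0$ everywhere is equivalent to $g(0)\geq0$, full stop. Anchoring instead at $x=b^\omega$ (or at $x=b$ when $b<b^\omega$) cannot work: if $g(b^\omega)\geq0$ and $g$ is non-decreasing, you only get $g\geq0$ on $[b^\omega,\infty)$; on $[0,b^\omega)$ monotonicity gives $g(x)\leq g(b^\omega)$, the wrong direction. The phrase ``non-decreasingness of $g$ on the side where $g$ might a priori be negative, together with this anchor value, then yields $g\geq0$ everywhere'' is therefore incorrect as stated — there is no way to escape computing $g(0)$ (or equivalently the infimum). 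The paper does exactly this: it shows
\[
g(0)=\int_{b^\omega}^{b}\Bigl(1-\tfrac{\lambda}{q}\omega'_+(y)\Bigr)\Bigl[Z_q(y)-\tfrac{Z_q(b)}{W_q(b)}W_q(y)\Bigr]\,\mathrm{d}y\geq0,
\]
the sign following because $\omega'_+(y)<q/\lambda$ for $y>b^\omega$ (from Lemma \ref{lem2.1}) and $Z_q(y)-\tfrac{Z_q(b)}{W_q(b)}W_q(y)\geq0$ on $[0,b]$ by \eqref{fluc.2}. The computation is in fact quite short once you write $V^\omega_{0,b}(0)-V^\omega_{0,b^\omega}(0)$ out of \eqref{v0b(x)} and use $\ell(b^\omega)=0$; there is no need to hunt for a linear-regime anchor point. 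So: keep the derivative argument, but replace your anchoring step with the direct evaluation of $g(0)$.
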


\begin{proof}
We only provide the proof for the case $b\in(b^{\omega},\infty)$ because the proof for the case $b\in(0,b^{\omega})$ is very similar.
By the fluctuation identity \eqref{fluc.2}, we know that
\begin{eqnarray}
\label{ineq.pot.1}
Z_{q}(y)-\frac{Z_{q}(b)}{W_{q}(b)}W_{q}(y)\geq 0, \quad y\in[0,b].
\end{eqnarray}
By \eqref{ineq.pot.1} and the facts that $\omega_{+}^{\prime}(y)<q/\lambda$ for $y\geq b^{\omega}$, $Z_{q}(b)-\phi
-\lambda\int_{0}^{b}\omega_{+}^{\prime}(y)W_{q}(y)
\mathrm{d}y>0$ for $b>b^{\omega}$, and $Z_{q}(b^{\omega})-\phi
-\lambda\int_{0}^{b^{\omega}}\omega_{+}^{\prime}(y)W_{q}(y)
\mathrm{d}y=0$ (see Lemma \ref{lem2.1}), we have
\begin{eqnarray}\label{g0}
g(0)
\hspace{-0.3cm}&=&\hspace{-0.3cm}
-
\overline{Z}_{q}(b^{\omega})
-\frac{\psi^{\prime}(0+)}{q}
+
\frac{\lambda}{q}\bigg[\omega(0)+
\int_{0}^{b^{\omega}}\omega_{+}^{\prime}(y)
Z_{q}(y)\mathrm{d}y\bigg]
\nonumber\\
\hspace{-0.3cm}&&\hspace{-0.3cm}
-\left[-\overline{Z}_{q}(b)-\frac{\psi^{\prime}(0+)}{q}+
\frac{\lambda}{q}\bigg[\omega(0)+
\int_{0}^{b}\omega_{+}^{\prime}(y)
Z_{q}(y)\mathrm{d}y\bigg]\right]
\nonumber\\
\hspace{-0.3cm}&&\hspace{-0.3cm}
+Z_{q}(b)\bigg[\frac{Z_{q}(b^{\omega})-\phi
-\lambda\int_{0}^{b^{\omega}}\omega_{+}^{\prime}(y)W_{q}(y)
\mathrm{d}y}{qW_{q}(b)}
-\frac{Z_{q}(b)-\phi
-\lambda\int_{0}^{b}\omega_{+}^{\prime}(y)W_{q}(y)
\mathrm{d}y}{qW_{q}(b)}\bigg]
\nonumber\\
\hspace{-0.3cm}&=&\hspace{-0.3cm}
\int_{b^{\omega}}^{b}\left(1-\frac{\lambda}{q}\omega_{+}^{\prime}(y)\right)
Z_{q}(y)\mathrm{d}y
-\frac{Z_{q}(b)}{W_{q}(b)}
\int_{b^{\omega}}^{b}\left(1-\frac{\lambda}{q}\omega_{+}^{\prime}(y)\right)W_{q}(y)\mathrm{d}y
\nonumber\\
\hspace{-0.3cm}&=&\hspace{-0.3cm}
\int_{b^{\omega}}^{b}\left(1-\frac{\lambda}{q}\omega_{+}^{\prime}(y)\right)
\left[Z_{q}(y)
-\frac{Z_{q}(b)}{W_{q}(b)}
W_{q}(y)\right]\mathrm{d}y
\nonumber\\
\hspace{-0.3cm}&\geq &\hspace{-0.3cm}0.
\end{eqnarray}
Using Theorem 8.7 in Chapter 8 of \cite{Kyp2014}, we arrive at
\begin{eqnarray}
\int_{0}^{\infty}e^{-qt}\mathrm{P}_{x}\left(b-X_{t}\in\mathrm{d}y,t<\tau_{0}^{-}\wedge\tau_{b}^{+}\right)\mathrm{d}t=\bigg[\frac{W_{q}(b-x)}{W_{q}(b)}W_{q}(y)-W_{q}(y-x)\bigg]\mathbf{1}_{[0,b]}(y)\mathrm{d}y,\nonumber
\end{eqnarray}
which implies that
\begin{eqnarray}
\label{ineq.pot.2}
\frac{W_{q}(b-x)}{W_{q}(b)}W_{q}(y)-W_{q}(y-x)\geq 0,\quad y\in[0,b].
\end{eqnarray}
By Lemma \ref{lem2.1}, \eqref{ineq.pot.2}, and the definitions of $V_{0,b^{\omega}}^{\omega}$ and $V^{\omega}_{0,b}$, one can check that
\begin{eqnarray}\label{g'x.1}
g^{\prime}(x)\hspace{-0.3cm}&=&\hspace{-0.3cm}
1
-\Big[Z_{q}(b-x)-\lambda\int_{0}^{b}\omega_{+}^{\prime}(y)W_{q}(y-x)\mathrm{d}y\Big]
\nonumber\\
\hspace{-0.3cm}&&\hspace{-0.3cm}
+\frac{W_{q}(b-x)}{W_{q}(b)}\Big[Z_{q}(b)-\phi
-\lambda\int_{0}^{b}\omega_{+}^{\prime}(y)W_{q}(y)
\mathrm{d}y\Big]
\nonumber\\
\hspace{-0.3cm}&&\hspace{-0.3cm}
-\frac{W_{q}(b-x)}{W_{q}(b)}\Big[Z_{q}(b^{\omega})-\phi
-\lambda\int_{0}^{b^{\omega}}\omega_{+}^{\prime}(y)W_{q}(y)
\mathrm{d}y\Big]
\nonumber\\
\hspace{-0.3cm}&=&\hspace{-0.3cm}
\int_{x}^{b}\left(-q+\lambda\omega_{+}^{\prime}(y)\right)W_{q}(y-x)\mathrm{d}y
\nonumber\\
\hspace{-0.3cm}&&\hspace{-0.3cm}
+\frac{W_{q}(b-x)}{W_{q}(b)}\bigg[\int_{x}^{b}\left(q-\lambda\omega_{+}^{\prime}(y)\right)W_{q}(y)\mathrm{d}y
+\int_{b^{\omega}}^{x}\left(q-\lambda\omega_{+}^{\prime}(y)\right)W_{q}(y)\mathrm{d}y\bigg]
\nonumber\\
\hspace{-0.3cm}&=&\hspace{-0.3cm}
\frac{W_{q}(b-x)}{W_{q}(b)}\int_{b^{\omega}}^{x}\left(q-\lambda\omega_{+}^{\prime}(y)\right)W_{q}(y)\mathrm{d}y
\nonumber\\
\hspace{-0.3cm}&&\hspace{-0.3cm}
+\int_{x}^{b}\left(q-\lambda\omega_{+}^{\prime}(y)\right)\left[\frac{W_{q}(b-x)}{W_{q}(b)}W_{q}(y)-W_{q}(y-x)\right]\mathrm{d}y
\nonumber\\
\hspace{-0.3cm}&\geq &\hspace{-0.3cm}
0,\quad x\in[b^{\omega},b),
\end{eqnarray}
and
\begin{eqnarray}\label{g'x.2}
g^{\prime}(x)\hspace{-0.3cm}&=&\hspace{-0.3cm}
Z_{q}(b^{\omega}-x)
-
\lambda
\int_{0}^{b^{\omega}}\omega_{+}^{\prime}(y)
W_{q}(y-x)\mathrm{d}y
-\Big[Z_{q}(b-x)-\lambda\int_{0}^{b}\omega_{+}^{\prime}(y)W_{q}(y-x)\mathrm{d}y\Big]
\nonumber\\
\hspace{-0.3cm}&&\hspace{-0.3cm}
+\frac{W_{q}(b-x)}{W_{q}(b)}\Big[Z_{q}(b)-\phi
-\lambda\int_{0}^{b}\omega_{+}^{\prime}(y)W_{q}(y)
\mathrm{d}y\Big]
\nonumber\\
\hspace{-0.3cm}&&\hspace{-0.3cm}
-\frac{W_{q}(b-x)}{W_{q}(b)}\Big[Z_{q}(b^{\omega})-\phi
-\lambda\int_{0}^{b^{\omega}}\omega_{+}^{\prime}(y)W_{q}(y)
\mathrm{d}y\Big]
\nonumber\\
\hspace{-0.3cm}&=&\hspace{-0.3cm}
\int_{b^{\omega}}^{b}\left(-q+\lambda\omega_{+}^{\prime}(y)\right)W_{q}(y-x)\mathrm{d}y
+\frac{W_{q}(b-x)}{W_{q}(b)} \int_{b^{\omega}}^{b}\left(q-\lambda\omega_{+}^{\prime}(y)\right)W_{q}(y)\mathrm{d}y
\nonumber\\
\hspace{-0.3cm}&=&\hspace{-0.3cm}
\int_{b^{\omega}}^{b}\left(q-\lambda\omega_{+}^{\prime}(y)\right)
\left[\frac{W_{q}(b-x)}{W_{q}(b)}W_{q}(y)-W_{q}(y-x)\right]\mathrm{d}y
\nonumber\\
\hspace{-0.3cm}&\geq &\hspace{-0.3cm}
0,\quad x\in[0,b^{\omega}).
\end{eqnarray}
Putting \eqref{g0}, \eqref{g'x.1}, \eqref{g'x.2}, and the fact that $g^{\prime}(x)\equiv0$ over $(-\infty,0)\cup(b,\infty)$ all together, we obtain the desired result.
\end{proof}

For any function that is sufficiently differentiable, let us define an operator $\mathcal{A}$ on $f$ that
$$
\mathcal{A}f(x):=\frac{1}{2}\sigma^{2}f^{\prime\prime}(x)
-\gamma f^{\prime}(x)+\int_{(0,\infty)}\left(f(x+y)-f(x)-f^{\prime}(x) y\mathbf{1}_{(0,1)}(y)\right)\upsilon(\mathrm{d}y),
$$
where $x\in(-\infty,\infty)$. The next verification lemma establishes the connection between the associated HJB variational inequality and the value function of the auxiliary control problem \eqref{valfun}. In particular, it enables us to compare the value function $V_{0,b^{\omega}}^{\omega}(x)$ under the candidate optimal double barrier $(0, b^{\omega})$ and the expected NPV $V_{(D,R)}^{\omega}(x)$ under any admissible singular controls $(D, R)$.

\begin{lem}[Verification Lemma]\label{lem2.7}
Suppose that 
the function $f(x)$ is non-decreasing and continuously differentiable over $(-\infty,\infty)$.
Furthermore, suppose that $f(x)$ is twice continuously differentiable over $(0,\infty)$ when $X$ has paths of unbounded variation, and
that
\begin{eqnarray}
\label{HJB}
\max\{\left(\mathcal{A}-q\right)f(x){\color{black}+\lambda\omega(x)}, 1-f^{\prime}(x), f^{\prime}(x)-\phi\}\leq 0.
\end{eqnarray}
Then
$f(x)\geq  V_{(D,R)}^{\omega}(x)$ for all $x\in\mathbb{R}$ and all admissible dividend and capital injection singular controls $(D,R)$.
 \end{lem}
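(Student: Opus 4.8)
The plan is to run a standard stochastic-calculus verification argument adapted to the spectrally positive, singular-control setting. Let $(D,R)$ be an arbitrary admissible pair and $U_t = X_t - D_t + R_t$ the controlled surplus. I would apply an appropriate change-of-variables / It\^o--Meyer formula for semimartingales to the process $t \mapsto e^{-qt} f(U_t)$, splitting the increments of $D$ and $R$ into their continuous parts and the sums over jumps, and using the fact that between the control actions $U$ moves like the spectrally positive L\'evy process $X$. The regularity hypotheses on $f$ ($C^1$ on $\mathbb{R}$, and $C^2$ on $(0,\infty)$ in the unbounded-variation case) are exactly what is needed to make the It\^o formula applicable; in the bounded-variation case the absence of a Gaussian component means $C^1$ suffices. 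This produces a decomposition of $e^{-qt} f(U_t)$ as $f(x)$ plus a drift term involving $(\mathcal{A}-q)f(U_{s-})\,\mathrm{d}s$, plus a local-martingale term (stochastic integral against the compensated Poisson random measure and, when present, the Brownian part), plus terms coming from $\mathrm{d}D$ and $\mathrm{d}R$.

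Next I would use the three inequalities bundled in \eqref{HJB}. The bound $(\mathcal{A}-q)f(x)+\lambda\omega(x)\le 0$ controls the drift term so that, after adding $\lambda\int_0^t e^{-qs}\omega(U_s)\,\mathrm{d}s$, the running contribution is nonpositive. The bound $f'(x)\le \phi$ handles the capital-injection terms: each increment $\mathrm{d}R_s$ (continuous by Lemma \ref{R.continu.}, so no jump bookkeeping is needed on the $R$ side once we restrict to the optimal class, though in full generality one treats $\Delta R$ too) contributes $-f'(U_{s-})\,\mathrm{d}R_s \ge -\phi\,\mathrm{d}R_s$. The bound $f'(x)\ge 1$ handles the dividend terms: a continuous dividend increment contributes $-f'(U_{s-})\,\mathrm{d}D^c_s \le -\mathrm{d}D^c_s$, and for a downward jump of $U$ caused by a lump dividend $\Delta D_s\in[0,X_{s-}]$ one writes $f(U_s)-f(U_{s-}) = -\int_0^{\Delta D_s} f'(U_{s-}-u)\,\mathrm{d}u \le -\Delta D_s$ using $f'\ge 1$. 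Collecting everything, one obtains for each $t$
\begin{eqnarray*}
e^{-qt}f(U_t) \le f(x) - \int_0^t e^{-qs}\,\mathrm{d}D_s + \phi\int_0^t e^{-qs}\,\mathrm{d}R_s - \lambda\int_0^t e^{-qs}\omega(U_s)\,\mathrm{d}s + M_t,
\end{eqnarray*}
where $M$ is a local martingale with $M_0=0$.

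To finish, I would take a localizing sequence of stopping times $T_n\uparrow\infty$ reducing $M$, take expectations (so the martingale term vanishes), rearrange to get $\mathrm{E}_x[\int_0^{T_n} e^{-qs}\,\mathrm{d}D_s - \phi\int_0^{T_n} e^{-qs}\,\mathrm{d}R_s + \lambda\int_0^{T_n} e^{-qs}\omega(U_s)\,\mathrm{d}s] \le f(x) - \mathrm{E}_x[e^{-qT_n}f(U_{T_n})]$, and then let $n\to\infty$. Recognizing the left side as converging to $V_{(D,R)}^{\omega}(x)$ (using \eqref{lap.pari.T} and monotone/dominated convergence together with the admissibility condition $\int_0^\infty e^{-qt}\,\mathrm{d}R_t<\infty$), the result follows provided $\liminf_n \mathrm{E}_x[e^{-qT_n}f(U_{T_n})]\ge 0$. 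Since $f$ is non-decreasing and $U_{T_n}\ge 0$, we have $f(U_{T_n})\ge f(0)$, so this reduces to controlling the sign/growth of $f(0)$ and, if $f(0)<0$, to a uniform integrability or linear-growth estimate on $f(U_{T_n})$; here the concavity-type control on $f'$ (bounded between $1$ and $\phi$) gives at most linear growth of $f$, and the decay of $e^{-qT_n}$ against the integrability of the discounted surplus handles the tail.

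I expect the main obstacle to be this last step — justifying the limit $\mathrm{E}_x[e^{-qT_n}f(U_{T_n})]\to 0$ (or showing its $\liminf$ is nonnegative) — together with the careful handling of the jump terms in the change-of-variables formula when $f$ is only $C^1$ (bounded-variation case) versus $C^2$ on $(0,\infty)$ (unbounded-variation case); in the latter case one must also check that $U$ spends zero Lebesgue time at $0$, or otherwise argue that the second-derivative term is only integrated over $(0,\infty)$ where $f$ is smooth. The integral term $\int_{(0,\infty)}(f(x+y)-f(x)-f'(x)y\mathbf{1}_{(0,1)}(y))\upsilon(\mathrm{d}y)$ must be shown finite, which again follows from the linear growth of $f$ and the integrability $\int_{(0,\infty)}(1\wedge z^2)\upsilon(\mathrm{d}z)<\infty$.
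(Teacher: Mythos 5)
Your proposal is correct and follows essentially the same route as the paper: restrict to admissible pairs with $R$ continuous of form \eqref{R.form} via Lemma \ref{R.continu.}, apply the It\^o--Meyer change-of-variables formula to $e^{-qt}f(U_t)$, use the three inequalities in \eqref{HJB} to dominate the drift, the continuous/jump dividend increments (via $f'\ge1$ and the mean-value trick for lump dividends), and the capital-injection increments (via $f'\le\phi$), then localize by a sequence $T_m:=m\wedge\inf\{t\ge0:U_t\ge m\}$, take expectations so the genuine martingales vanish, and let $t,m\to\infty$. One small over-complication: once you observe $f(U_{t\wedge T_m})\ge f(0)$ (since $f$ is non-decreasing and $U\ge0$), you do \emph{not} need any further uniform-integrability or linear-growth argument to handle the case $f(0)<0$ --- $\mathrm{E}_x[e^{-q(t\wedge T_m)}f(U_{t\wedge T_m})]\ge f(0)\,\mathrm{E}_x[e^{-q(t\wedge T_m)}]$ and the right-hand side tends to $0$ by bounded convergence (because $e^{-q(t\wedge T_m)}\le1$ and $T_m\to\infty$ a.s.), giving the required $\liminf\ge0$ for any sign of $f(0)$. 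This is exactly the paper's shortcut; your fallback via the linear growth of $f$ is correct but unnecessary.
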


\begin{proof}
Let $\mathcal{D}$ be the set of admissible dividend and capital injection strategy $(D_{t},R_{t})_{t\geq0}$ with $R_{t}$ being continuous and of form \eqref{R.form}.
By Lemma \ref{R.continu.}, we only need to prove that $f(x)$ dominates the value function of any admissible dividend and capital injection strategies among $\mathcal{D}$.
For a given strategy $(D,R)\in\mathcal{D}$, recall that $U_{t}=X_{t}-D_{t}+R_{t}$ for $t\geq0$. 
We follow Theorem 2.1 in \cite{Kyp2014} to denote $X_{t}$ as the sum of the independent processes $-\gamma t+\sigma B_{t}$, $\sum_{s\leq t}\Delta X_{s}\mathbf{1}_{\{\Delta X_{s}\geq 1\}}$, and
$X_{t}+\gamma t-\sigma B_{t}-\sum_{s\leq t}\Delta X_{s}\mathbf{1}_{\{\Delta X_{s}\geq 1\}}$,
with the latter one being a square integrable martingale.
Denote by $\{ {U}^{c}_{t};t\geq0\}$ and $\{ {D}^{c}_{t};t\geq0\}$
as the continuous part of  $\{ {U}_{t};t\geq0\}$ and $\{ {D}_{t};t\geq0\}$, respectively. 
By Theorem 4.57 (It\^{o}'s formula) in \cite{JaSh2003}, we have, for $x\in(0,\infty)$,
{\color{black}\begin{eqnarray}\label{2.2}
\hspace{-0.3cm}&&\hspace{-0.3cm}
e^{-q t}f( {U}_{t})\nonumber\\
\hspace{-0.3cm}&=&\hspace{-0.3cm}
f(x)-\int_{0-}^{t}q e^{-q s}f( {U}_{s-})\mathrm{d}s+\int_{0-}^{t} e^{-q s}f^{\prime}( {U}_{s-})\mathrm{d} {U}_{s}\nonumber\\
\hspace{-0.3cm}&&\hspace{-0.3cm}
+\frac{1}{2} \int_{0-}^{t} e^{-q s}f^{\prime\prime}( {U}_{s-})\mathrm{d}\langle  {U}^{c}(\cdot), {U}^{c}(\cdot)\rangle_{s}
\nonumber\\
\hspace{-0.3cm}&&\hspace{-0.3cm}
+\sum_{s\leq t}e^{-q s}\big(f( {U}_{s-}+\Delta  {U}_{s})-f( {U}_{s-})-f^{\prime}( {U}_{s-})\Delta  {U}_{s}\big)
\nonumber\\
\hspace{-0.3cm}&=&\hspace{-0.3cm}
f(x)-\int_{0-}^{t}q e^{-q s}f( {U}_{s-})\mathrm{d}s+\int_{0-}^{t} e^{-q s}f^{\prime}( {U}_{s-})
\mathrm{d}(-\gamma s+\sigma B_{s})\nonumber\\
\hspace{-0.3cm}&&\hspace{-0.3cm}
+\int_{0-}^{t} e^{-q s}f^{\prime}( {U}_{s-})
\mathrm{d}\big(X_{s}+\gamma s-\sigma B_{s}-\sum_{r\leq s}\Delta X_r\mathbf{1}_{\{\Delta X_r\geq 1\}}\big)
\nonumber\\
\hspace{-0.3cm}&&\hspace{-0.3cm}
+\int_{0-}^{t} e^{-q s}f^{\prime}( {U}_{s-})
\mathrm{d}\big(R_{s}- {D}^{c}_s-\sum_{r\leq s}\Delta  {D}_r\big)
\nonumber\\
\hspace{-0.3cm}&&\hspace{-0.3cm}
+\int_{0-}^{t} e^{-q s}f^{\prime}( {U}_{s-})
\mathrm{d}\big(\sum_{r\leq s}\Delta X_r\mathbf{1}_{\{\Delta X_r\geq 1\}}\big)
+\frac{\sigma^{2}}{2} \int_{0-}^{t} e^{-q s}f^{\prime\prime}( {U}_{s-})\mathrm{d}s
\nonumber\\
\hspace{-0.3cm}&&\hspace{-0.3cm}
+\sum_{s\leq t}e^{-q s}\big[f( {U}_{s-}+\Delta X_{s})-f( {U}_{s-})-f^{\prime}( {U}_{s-})\Delta X_{s}\big]
\nonumber\\
\hspace{-0.3cm}&&\hspace{-0.3cm}
+\sum_{s\leq t}e^{-q s}\big[f( {U}_{s-}+\Delta  {U}_{s})-f( {U}_{s-}+\Delta X_{s})
+f^{\prime}( {U}_{s-})\Delta  {D}_{s}\big]
\nonumber\\
\hspace{-0.3cm}&=&\hspace{-0.3cm}
f(x)-\int_{0-}^{t}q e^{-q s}f( {U}_{s-})\mathrm{d}s+\int_{0-}^{t} e^{-q s}f^{\prime}( {U}_{s-})
\mathrm{d}(-\gamma s+\sigma B_{s})\nonumber\\
\hspace{-0.3cm}&&\hspace{-0.3cm}
+\int_{0-}^{t} e^{-q s}f^{\prime}( {U}_{s-})
\mathrm{d}\big(X_{s}+\gamma s-\sigma B_{s}-\sum_{r\leq s}\Delta X_r\mathbf{1}_{\{\Delta X_r\geq 1\}}\big)
\nonumber\\
\hspace{-0.3cm}&&\hspace{-0.3cm}
+\int_{0-}^{t} e^{-q s}f^{\prime}( {U}_{s-})
\mathrm{d}(R_{s}- {D}^{c}_{s})
+\frac{\sigma^{2}}{2} \int_{0-}^{t} e^{-q s}f^{\prime\prime}( {U}_{s-})\mathrm{d}s
\nonumber\\
\hspace{-0.3cm}&&\hspace{-0.3cm}
+\sum_{s\leq t}e^{-q s}\big[f( {U}_{s-}+\Delta X_{s})-f( {U}_{s-})
-f^{\prime}( {U}_{s-})\Delta X_{s}\mathbf{1}_{\{\Delta X_{s}<1\}}\big]
\nonumber\\
\hspace{-0.3cm}&&\hspace{-0.3cm}
+\sum_{s\leq t}e^{-q s}\big[f( {U}_{s-}+\Delta  {U}_{s})-f( {U}_{s-}+\Delta X_{s})\big],
\end{eqnarray}}where $\Delta  {D}_{s}= {D}_{s}- {D}_{s-}$, $\Delta X_{s}=X_{s}-X_{s-}$, and, $\Delta  {U}_{s}= {U}_{s}- {U}_{s-}=\Delta X_{s}-\Delta  {D}_{s}$. 
By the fact that $V^{\prime}(x)\geq 1$ for all $x\in[0,\infty)$ (see; \eqref{HJB}), 
we have, for $s\in\left[0,t\right)$,
\begin{eqnarray}
&&\hspace{-0.6cm}f( {U}_{s-}+\Delta  {U}_{s})-f( {U}_{s-}+\Delta X_{s})+ \Delta  {D}_{s}\le 0.\label{2.3}
\end{eqnarray}
Therefore, by \eqref{HJB}, 
(\ref{2.2}) and (\ref{2.3}), 
we have
{\color{black}\begin{eqnarray}\label{3.10}
\hspace{-0.3cm}&&\hspace{-0.3cm}
e^{-q t}f( {U}_{t})
\nonumber\\
\hspace{-0.3cm}&=&\hspace{-0.3cm}
f(x)+\int_{0-}^{t}e^{-q s}(\mathcal{A}-q)f( {U}_{s-})\mathrm{d}s
+\int_{0-}^{t}\sigma e^{-q s}f^{\prime}( {U}_{s-})\mathrm{d}B_{s}
\nonumber\\
\hspace{-0.3cm}&&\hspace{-0.3cm}
+\int_{0-}^{t} e^{-q s}f^{\prime}( {U}_{s-})
\mathrm{d}\big(X_{s}+\gamma s-\sigma B_{s}-\sum_{r\leq s}\Delta X_r\mathbf{1}_{\{\Delta X_r\geq 1\}}\big)
\nonumber\\
\hspace{-0.3cm}&&\hspace{-0.3cm}
+\int_{0-}^{t} e^{-q s}f^{\prime}( {U}_{s-})\mathrm{d}(R_{s}- {D}^{c}_{s})
+\int_{0-}^{t}\int_{0}^{\infty}e^{-q s}\big[f( {U}_{s-}
+y)-f( {U}_{s-})
\nonumber\\
\hspace{-0.3cm}&&\hspace{-0.3cm}
-f^{\prime}( {U}_{s-})y
\mathbf{1}_{(0,1)}(y)\big]
\overline{N}(\mathrm{d}s,\mathrm{d}y)
\nonumber\\
\hspace{-0.3cm}&&\hspace{-0.3cm}
+\sum_{s\leq t}e^{-q s}\big[f( {U}_{s-}+\Delta  {U}_{s})-f( {U}_{s-}+\Delta X_{s})\big]
\nonumber\\
\hspace{-0.3cm}&\leq&\hspace{-0.3cm} f(x){\color{black}-\lambda\int_{0}^{t}e^{-q s}\omega( {U}_{s-})\mathrm{d}s}+\phi\int_{0-}^{t} e^{-q s}\mathrm{d}R_{s}
-\int_{0-}^{t} e^{-q s}\mathrm{d} {D}^{c}_{s}
+\int_{0-}^{t}\sigma e^{-q s}f^{\prime}( {U}_{s-})\mathrm{d}B_{s}
\nonumber\\
\hspace{-0.3cm}&&\hspace{-0.3cm}
+\int_{0}^{t} e^{-q s}f^{\prime}( {U}_{s-})
\mathrm{d}\big(X_{s}+\gamma s-\sigma B_{s}-\sum_{r\leq s}\Delta X_r\mathbf{1}_{\{\Delta X_r\geq 1\}}\big)
\nonumber\\
\hspace{-0.3cm}&&\hspace{-0.3cm}
 -\sum\limits_{s\leq t}e^{-q s} \Delta  {D}_{s}+\int_{0-}^{t}\int_{0}^{\infty}e^{-q s}
\big(f( {U}_{s-}+y)-f( {U}_{s-})
\nonumber\\
\hspace{-0.3cm}&&\hspace{-0.3cm}
-f^{\prime}( {U}_{s-})y
\mathbf{1}_{(0,1]}(y)\big)
\overline{N}(\mathrm{d}s,\mathrm{d}y)
,\quad x\in(0,\infty).
\end{eqnarray}}Define a sequence of stopping times $(T_{m})_{m\geq1}$ that 
\begin{eqnarray}
\hspace{-0.3cm}&&\hspace{-0.3cm}
T_{m}:=m\wedge\inf\{t\geq0; 
 {U}_{t}\geq m\}, \quad m\geq 1.\nonumber
\end{eqnarray}
It follows that $T_{m}\rightarrow\infty$ almost surely as $m\rightarrow\infty$. In addition, $ {U}_{t-}$ is confined in the compact set $\left[0,m\right]$ for $t\leq T_{m}$.
By the L\'evy-It\^{o} decomposition theorem (see, Theorem 2.1 in \cite{Kyp2014}) or Appendix A in \cite{Loe09}, the stochastic integral
$${\color{black}\int_{0-}^{t\wedge T_{m}} e^{-q s}f^{\prime}( {U}_{s-})
\mathrm{d}\bigg(X_{s}+\gamma s-\sigma B_{s}-\sum_{r\leq s}\Delta X_r\mathbf{1}_{\{\Delta X_r\geq 1\}}\bigg)}, \quad t\geq0,$$
is a martingale starting from zero. 
By Corollary 4.6 in \cite{Kyp2014} {\color{black}and the facts that $\int_{0}^{1}y^{2}\upsilon(\mathrm{d}y)<\infty$ (because $\upsilon$ is a L\'evy measure) and $\int_{1}^{\infty}y\upsilon(\mathrm{d}y)<\infty$ (by the assumption that $\mathrm{E}[X_1]<\infty$)},
the following stochastic integral with respect to the  compensated Poisson random measure
\begin{eqnarray}
&&\int_{0-}^{t\wedge T_{m}}\int_{0}^{\infty}e^{-q s}\left(f( {U}_{s-}
+y)-f( {U}_{s-})
-f^{\prime}( {U}_{s-})y
\mathbf{1}_{(0,1]}(y)\right)
\overline{N}(\mathrm{d}s,\mathrm{d}y),\quad t\geq0,\nonumber
\end{eqnarray}
is a martingale starting from zero.
Similarly, the following stochastic integral (see, Page 146 in \cite{KarS1991})
$$\int_{0-}^{t\wedge T_{m}}\sigma e^{-q s}f^{\prime}( {U}_{s-})\mathrm{d}B_{s},\quad t\geq 0,$$
is a martingale starting from zero.

Taking expectations on both sides of \eqref{3.10} after localization by $T_{m}$, we have
\begin{eqnarray}\label{45.v4}
f(x)
\hspace{-0.3cm}&\geq&\hspace{-0.3cm}
\mathrm{E}_x\left[e^{-q (t\wedge T_{m})}f( {U}_{t\wedge T_{m}})\right]
-\phi \mathrm{E}_x\Big[\int_{0-}^{t\wedge T_{m}} e^{-q s}\mathrm{d}R_{s}\Big]\nonumber\\
\hspace{-0.3cm}&&\hspace{-0.3cm}
+ \mathrm{E}_x\Big[\sum_{s\leq t\wedge T_{m}}e^{-q s}\Delta  {D}_{s}+\int_0^{t\wedge T_{m}}e^{-q s}\mathrm{d} {D}^{c}_{s}\Big]{\color{black}+\lambda\mathrm{E}_x\Big[\int_{0}^{t\wedge T_{m}}e^{-q s}\omega( {U}_{s-})\mathrm{d}s\Big]}
\nonumber\\
\hspace{-0.3cm}&\geq&\hspace{-0.3cm}\mathrm{E}_x\left[e^{-q (t\wedge T_{m})}{\color{black}f(0)}\right]
-\phi \mathrm{E}_x\Big[\int_{0-}^{t\wedge T_{m}} e^{-q s}\mathrm{d}R_{s}\Big]\nonumber\\
\hspace{-0.3cm}&&\hspace{-0.3cm}
+ \mathrm{E}_x\Big[\int_{0-}^{t\wedge T_{m}}e^{-q s}\mathrm{d} {D}_{s}\Big]
{\color{black}+\lambda\mathrm{E}_x\Big[\int_{0}^{t\wedge T_{m}}e^{-q s}\omega( {U}_{s})\mathrm{d}s\Big]},\quad x\in(0,\infty).
\end{eqnarray}
By setting $n, t, m\to\infty$ in (\ref{45.v4}), and then taking use of the bounded convergence theorem (note that {\color{black}$f(0)$} is bounded), we get
\begin{eqnarray}\label{46.v4}
f(x)
\hspace{-0.3cm}&\geq&\hspace{-0.3cm}
-\phi \mathrm{E}_x\Big[\int_{0-}^{\infty} e^{-q s}\mathrm{d}R_{s}\Big]+ \mathrm{E}_x\Big[\int_{0-}^{\infty}e^{-q s}\mathrm{d} {D}_{s}\Big]{\color{black}+\lambda\mathrm{E}_x\Big[\int_{0}^{\infty}e^{-q s}\omega( {U}_{s})\mathrm{d}s\Big]}
\nonumber\\
\hspace{-0.3cm}&=&\hspace{-0.3cm}V_{(D,R)}^{\omega}(x),\quad x\in(0,\infty).\nonumber
\end{eqnarray}
The arbitrariness of $(D,R)$ and the continuity of $f$ imply that
$f(x)\geq V_{(D,R)}^{\omega}(x)$ for all $x\in[0,\infty)$ and all admissible $(D,R)\in\mathcal{D}$. The reverse inequality is trivial, and the proof is completed.
\end{proof}

\begin{lem}
\label{lem2.6}
It holds that $1\leq V_{0,b^{\omega}}^{\omega\prime}(x)\leq \phi$ for $x\in(0,\infty)$, and
\begin{eqnarray}\label{ver.xles.a}
\begin{cases}
\mathcal{A}V_{0,b^{\omega}}^{\omega}(x)-q V_{0,b^{\omega}}^{\omega}(x){\color{black}+\lambda\omega(x)}=0, \quad x\in(0, b^{\omega}],\\
\mathcal{A}V_{0,b^{\omega}}^{\omega}(x)-q V_{0,b^{\omega}}^{\omega}(x){\color{black}+\lambda\omega(x)}\leq 0,
\quad x\in(b^{\omega},\infty).
\end{cases}
\end{eqnarray}
\end{lem}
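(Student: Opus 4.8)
The plan is to verify the two assertions of Lemma \ref{lem2.6} separately, both by direct computation using the explicit form \eqref{v0b*(x)} of $V_{0,b^{\omega}}^{\omega}$ together with the structural facts already established. The derivative bound $1\le (V_{0,b^{\omega}}^{\omega})'(x)\le\phi$ on $(0,\infty)$ is almost immediate from Lemma \ref{lem2.2}: that lemma gives concavity of $V_{0,b^{\omega}}^{\omega}$ together with $(V_{0,b^{\omega}}^{\omega})'(x)=\phi$ on $(-\infty,0]$ and $(V_{0,b^{\omega}}^{\omega})'(x)=1$ on $[b^{\omega},\infty)$; concavity forces the derivative to decrease monotonically from $\phi$ to $1$ as $x$ runs from $0$ to $b^{\omega}$, so $1\le (V_{0,b^{\omega}}^{\omega})'(x)\le\phi$ throughout $(0,\infty)$. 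I would state this in one or two lines.

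For the variational identity/inequality \eqref{ver.xles.a}, the region $x\in(0,b^{\omega}]$ is the heart of the matter. Here the idea is that, by construction, $V_{0,b^{\omega}}^{\omega}$ on $[0,b^{\omega}]$ is exactly the expected NPV of the surplus run under the double-barrier strategy $(0,b^{\omega})$, which before exiting $(0,b^{\omega})$ coincides with the free process $X$; standard theory for spectrally positive (equivalently, the dual spectrally negative) Lévy processes guarantees that such a $q$-resolvent-type functional solves the integro-differential equation $(\mathcal A-q)V_{0,b^{\omega}}^{\omega}(x)+\lambda\omega(x)=0$ on the open interval. Concretely, I would differentiate \eqref{v0b*(x)} on $[0,b^{\omega}]$, using the facts that $Z_q'=qW_q$, $\overline Z_q'=Z_q$, and the Laplace-transform characterization $(\psi-q)$ of $W_q$ (equivalently, the known identities $(\mathcal A-q)W_q(x)=0$ and $(\mathcal A-q)Z_q(x)=0$ for $x>0$, and $(\mathcal A-q)\overline Z_q(x)=\psi'(0+)-\text{(lower-order)}$, plus the convolution identity $(\mathcal A-q)\int_0^{b^\omega}\omega_+'(y)Z_q(y-x)\,dy = -q\,\omega_+'$-type term). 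Applying $(\mathcal A-q)$ termwise to \eqref{v0b*(x)} then collapses everything except the source term $\lambda\omega(x)$, whose appearance is forced by the $\lambda\int_0^{b^{\omega}}\omega_+'(y)Z_q(y-x)\,dy$ contribution after integration by parts. I should be careful that the $C^1$ (and $C^2$ in the unbounded-variation case) regularity from Lemma \ref{lem2.1} is exactly what legitimizes applying $\mathcal A$ pointwise.

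For $x\in(b^{\omega},\infty)$, by \eqref{v0b*(x)} the function is affine, $V_{0,b^{\omega}}^{\omega}(x)=x-b^{\omega}+V_{0,b^{\omega}}^{\omega}(b^{\omega})$, so $(V_{0,b^{\omega}}^{\omega})'\equiv1$, $(V_{0,b^{\omega}}^{\omega})''\equiv0$, and $\mathcal A V_{0,b^{\omega}}^{\omega}(x)=-\gamma+\int_{(0,\infty)}\bigl(V_{0,b^{\omega}}^{\omega}(x+y)-V_{0,b^{\omega}}^{\omega}(x)-y\mathbf 1_{(0,1)}(y)\bigr)\upsilon(dy)$. Here $V_{0,b^{\omega}}^{\omega}$ is concave with slope $1$ at $x$, so $V_{0,b^{\omega}}^{\omega}(x+y)-V_{0,b^{\omega}}^{\omega}(x)\le y$ for all $y>0$, which bounds $\mathcal A V_{0,b^{\omega}}^{\omega}(x)\le -\gamma+\int_{(0,1)}(y-y)\upsilon(dy)+\int_{[1,\infty)}\bigl(V_{0,b^{\omega}}^{\omega}(x+y)-V_{0,b^{\omega}}^{\omega}(x)\bigr)\upsilon(dy)\le \psi'(0+)+\dots$; the cleanest route is instead to use continuity of $(\mathcal A-q)V_{0,b^{\omega}}^{\omega}+\lambda\omega$ at $b^{\omega}$ (guaranteed by the $C^1$, resp.\ $C^2$, matching from Lemma \ref{lem2.1}), so that the value $0$ on $(0,b^{\omega}]$ passes to $b^{\omega}$, and then show the derivative of $x\mapsto(\mathcal A-q)V_{0,b^{\omega}}^{\omega}(x)+\lambda\omega(x)$ is $\le0$ on $(b^{\omega},\infty)$ using $(V_{0,b^{\omega}}^{\omega})'\equiv1$, $\omega_+'\le\omega_+'(b^{\omega})\le q/\lambda$ (from $b^{\omega}>\sup\{x:q-\lambda\omega_+'(x)\le0\}$ in Lemma \ref{lem2.1}), and concavity of $\omega$. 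I expect the main obstacle to be the careful bookkeeping in the region $(0,b^{\omega}]$: applying the operator $\mathcal A$ to the convolution term $\int_0^{b^{\omega}}\omega_+'(y)Z_q(y-x)\,dy$ and to $\overline Z_q(b^{\omega}-x)$ requires interchanging $\mathcal A$ with the integral and handling the boundary behaviour of $W_q$ at $0+$ (which differs in the bounded- and unbounded-variation cases), so I would treat those two cases with slightly different justifications while arriving at the same cancellation.
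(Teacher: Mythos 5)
Your handling of the derivative bounds (via Lemma \ref{lem2.2}) and of the inequality on $(b^{\omega},\infty)$ (affine structure, continuity of $(\mathcal A-q)V_{0,b^{\omega}}^{\omega}+\lambda\omega$ at $b^{\omega}$, and $-q+\lambda\omega_{+}'(x)\le0$ for $x>b^{\omega}$) matches the paper essentially line for line. The genuine divergence is in the core region $x\in(0,b^{\omega}]$. You propose a purely analytic route: apply $(\mathcal A-q)$ termwise to the explicit formula \eqref{v0b*(x)}, invoking the scale-function identities (in the dual, spectrally negative, form, e.g.\ $(\mathcal A-q)[W_q(b^{\omega}-\cdot)]=0$) plus a convolution calculation that produces the source $\lambda\omega(x)$. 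The paper instead argues probabilistically: because the controlled surplus coincides with the free process $X$ until $\kappa=\tau_0^-\wedge\tau_{b^{\omega}}^+$, the strong Markov property makes $e^{-q(t\wedge\kappa)}V_{0,b^{\omega}}^{\omega}(X_{t\wedge\kappa})+\lambda\int_0^{t\wedge\kappa}e^{-qs}\omega(X_s)\,ds$ a martingale; then It\^o's formula plus the martingality of the other terms forces the drift $\int_0^{t\wedge\kappa}e^{-qs}\bigl[(\mathcal A-q)V_{0,b^{\omega}}^{\omega}(X_{s-})+\lambda\omega(X_{s-})\bigr]ds$ to vanish, and dividing by $t$ and letting $t\downarrow0$ gives the equality pointwise (then continuity extends to $x=b^{\omega}$). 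The paper's route sidesteps exactly the ``careful bookkeeping'' you correctly flag as the obstacle to your approach: you would need to justify the interchange of $\mathcal A$ with the integral $\int_0^{b^{\omega}}\omega_+'(y)Z_q(y-x)\,dy$ and track the atom of $W_q$ at $0$ separately in the bounded- and unbounded-variation cases. Your route is doable and more elementary in flavor, but the probabilistic argument is shorter, uniform across the two variation cases, and only uses the $C^1$/$C^2$ regularity from Lemma \ref{lem2.1} to make It\^o legitimate. One minor caution in your sketch: write the scale-function identities in the correct dual form (you are differentiating functions of $b^{\omega}-x$, $y-x$, not of $x$), and note that the needed $\omega_+'(x)<q/\lambda$ for $x>b^{\omega}$ follows from $b^{\omega}>\sup\{x\ge0:q-\lambda\omega_+'(x)\le0\}$ in Lemma \ref{lem2.1}, as you correctly use.
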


\begin{proof}
That $1\leq V_{0,b^{\omega}}^{\omega\prime}(x)\leq \phi$ for $x\in(0,\infty)$ is a direct consequence of Lemma \ref{lem2.2}.
Put $\kappa:=\tau_{0}^{-}\wedge\tau_{b^{\omega}}^{+}$. Under the dividend and capital injection strategy $(D^{(0,b^{\omega})}_t,R^{(0,b^{\omega})}_t)$, neither dividends will be paid out of the surplus process nor capitals will be injected into the surplus process prior to the time $\kappa$, hence the controlled process $U^{(0,b^{\omega})}_t$ follows the same dynamics of $X$ before $\kappa$. By the strong Markov property of the process {\color{black}$X$}, we have that
\begin{eqnarray}
&&
\mathrm{E}_{x}\left[\int_{0}^{\infty}\mathrm{e}^{-qt}\mathrm{d}D^{(0,b^{\omega})}_t
-\phi
\int_{0}^{\infty}\mathrm{e}^{-qt}\mathrm{d}R^{(0,b^{\omega})}_t+\lambda\int_{0}^{\infty}\mathrm{e}^{-qt}\omega(U_t^{(0,b^{\omega})})\mathrm{d}t
\bigg|\mathcal{F}_{s\wedge \kappa}
\right]\nonumber\\
\hspace{-0.3cm}
&=&\hspace{-0.3cm}
\mathrm{E}_{x}\left[\int_{0}^{\infty}\mathrm{e}^{-q(t+s\wedge \kappa)}
\mathrm{d}(D^{(0,b^{\omega})}_{t+s\wedge \kappa}-\phi R^{(0,b^{\omega})}_{t+s\wedge \kappa})+\lambda\int_{0}^{\infty}\mathrm{e}^{-q(t+s\wedge \kappa)}\omega(U_{t+s\wedge \kappa}^{(0,b^{\omega})})\mathrm{d}t\bigg|\mathcal{F}_{s\wedge \kappa}\right]
\nonumber\\
\hspace{-0.3cm}
&&\hspace{-0.3cm}
+\lambda\int_{0}^{s\wedge \kappa}\mathrm{e}^{-qt}\omega(X_t)\mathrm{d}t
\nonumber\\
\hspace{-0.3cm}
&=&\hspace{-0.3cm}\mathrm{e}^{-q(s\wedge \kappa)}\mathrm{E}_{X_{s\wedge \kappa}}\left[\int_{0}^{\infty}\mathrm{e}^{-qt}
\mathrm{d}(D^{(0,b^{\omega})}_{t}-\phi R^{(0,b^{\omega})}_{t})+\lambda\int_{0}^{\infty}\mathrm{e}^{-qt}\omega(U_{t}^{(0,b^{\omega})})\mathrm{d}t\right]
\nonumber\\
\hspace{-0.3cm}
&&\hspace{-0.3cm}
+\lambda\int_{0}^{s\wedge \kappa}\mathrm{e}^{-qt}\omega(X_t)\mathrm{d}t
\nonumber\\
\hspace{-0.3cm}
&=&\hspace{-0.3cm}\mathrm{e}^{-q(s\wedge \kappa)}V^{\omega}_{0,b^{\omega}}(X_{s\wedge \kappa}) +\lambda\int_{0}^{s\wedge \kappa}\mathrm{e}^{-qt}\omega(X_t)\mathrm{d}t,\quad  s\geq0,\,\,x\in(0,b^{\omega}),\nonumber
\end{eqnarray}
which implies that the right-hand side of the above equation is a martingale.
By It\^{o}'s formula, it holds that
{\color{black}\begin{eqnarray}
&&e^{-q(t\wedge\kappa)}V_{0,b^{\omega}}^{\omega}(X_{t\wedge\kappa})
+\lambda\int_{0}^{t\wedge \kappa}\mathrm{e}^{-qs}\omega(X_s)\mathrm{d}s
-V_{0,b^{\omega}}^{\omega}(x)\nonumber\\
\hspace{-0.3cm}
&=&\hspace{-0.3cm}
\int_{0-}^{t\wedge\kappa}e^{-q s}\left((\mathcal{A}-q)V_{0,b^{\omega}}^{\omega}(X_{s-})+\lambda\omega(X_{s-})\right)\mathrm{d}s+\int_{0-}^{t\wedge\kappa}\sigma e^{-q s}V_{0,b^{\omega}}^{\omega\prime}(X_{s-})\mathrm{d}B_{s}
\nonumber\\
&&
+\int_{0-}^{t\wedge\kappa} e^{-q s}V_{0,b^{\omega}}^{\omega\prime}(X_{s-})
\mathrm{d}\big(X_{s}+\gamma s-\sigma B_{s}-\sum_{r\leq s}\Delta X_{r}\mathbf{1}_{\{\Delta X_{r}\geq 1\}}\big)
\nonumber\\
&&+\int_{0-}^{t\wedge\kappa}\int_{0}^{\infty}e^{-q s}\big[V_{0,b^{\omega}}^{\omega}(X_{s-}-y)-V_{0,b^{\omega}}^{\omega}(X_{s-})
+V_{0,b^{\omega}}^{\omega\prime}(X_{s-})y\mathbf{1}_{(0,1]}(y)\big]
\overline{N}(\mathrm{d}s,\mathrm{d}y),\quad t\geq0.\nonumber
\end{eqnarray}
Following the same arguments in the proof of Lemma \ref{lem2.7}, we get that all the terms (except for the first one) on the right hand side of the above equality are martingales starting from $0$.} Hence, by taking expectations on both sides of the above equation, we get that
$$
0=\mathrm{E}_{x}\left[\int_{0-}^{t\wedge\kappa}e^{-q s}\left((\mathcal{A}-q)V_{0,b^{\omega}}^{\omega}(X_{s-})+\lambda\omega(X_{s-})\right)\mathrm{d}s\right],\quad t\geq0,\,\,x\in(0,b^{\omega}).
$$
Dividing both sides of the above equation by $t$ and then setting $t\downarrow 0$, we can obtain the equality in (\ref{ver.xles.a}) for $x\in(0,b^{\omega})$ by the mean value theorem and the dominated convergence theorem. In addition, the equality in (\ref{ver.xles.a}) for $x=b^{\omega}$ follows from the continuity of $(\mathcal{A}-q)V_{0,b^{\omega}}^{\omega}(x){\color{black}+\lambda\omega(x)}$ at $b^{\omega}$. For a more detailed proof of (\ref{ver.xles.a}), we refer to the proof of Lemma 4.2 in \cite{KypRS2010}.

It remains to prove the inequality in \eqref{ver.xles.a}. By the definition of the value function $V_{0,b^{\omega}}^{\omega}(x)$, we get that
\begin{eqnarray}
\label{2.30}
(\mathcal{A}-q)V_{0,b^{\omega}}^{\omega}(x){\color{black}+\lambda\omega(x)}=-\gamma+\int_{1}^{\infty}y\upsilon(\mathrm{d}y)-q(x-b^{\omega}+V_{0,b^{\omega}}^{\omega}(b^{\omega})){\color{black}+\lambda\omega(x)},\quad x\in(b^{\omega},\infty),
\nonumber
\end{eqnarray}
which, combined with the concavity of $\omega$ and the fact that $b^{\omega}>\sup\{x\geq0; q-\lambda\omega_{+}^{\prime}(x)\leq 0\}\vee Z_{q}^{-1}(\phi)>0$ (see; Lemma \ref{lem2.1}), yields that
\begin{eqnarray}
\label{slop.con.}
\left[(\mathcal{A}-q)V_{0,b^{\omega}}^{\omega}(x){\color{black}+\lambda\omega(x)}\right]^{\prime}
=-q+\lambda\omega_{+}^{\prime}(x)\leq 0, \quad x\in(b^{\omega},\infty).
\end{eqnarray}
In view of the fact that $(\mathcal{A}-q)V_{0,b^{\omega}}^{\omega}(b^{\omega})+\lambda\omega(b^{\omega})=0$ and \eqref{slop.con.}, we conclude the desired inequality of \eqref{ver.xles.a}. 
\end{proof}

\begin{thm}
\label{thm.3.1}
The double barrier dividend and capital injection strategy with the upper barrier $b^{\omega}$ and the lower barrier $0$ dominates all admissible singular dividend and capital injection strategies that
$$V_{0,b^{\omega}}^{\omega}(x)=\sup_{D,R}V_{D,R}^{\omega}(x).$$
\end{thm}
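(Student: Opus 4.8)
The plan is to invoke the verification lemma (Lemma~\ref{lem2.7}) with the candidate $f=V_{0,b^{\omega}}^{\omega}$, and then to observe that the resulting upper bound is attained because the double barrier strategy $(D^{(0,b^{\omega})},R^{(0,b^{\omega})})$ is itself admissible. First I would assemble the structural properties already established: by Lemma~\ref{lem2.1}, $V_{0,b^{\omega}}^{\omega}$ is continuously differentiable on $(-\infty,\infty)$ and twice continuously differentiable on $(0,\infty)$ whenever $X$ has paths of unbounded variation; by Lemma~\ref{lem2.2}, it is non-decreasing and concave. Hence $f=V_{0,b^{\omega}}^{\omega}$ meets the smoothness and monotonicity hypotheses required by Lemma~\ref{lem2.7}. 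The continuous differentiability at $b^{\omega}$ and at $0$ is precisely the smooth-fit property built into the defining equation~\eqref{smoothcond.1} for $b^{\omega}$.

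Next I would verify the HJB variational inequality~\eqref{HJB} for $f=V_{0,b^{\omega}}^{\omega}$, treating the three competing terms separately and keeping track of the relevant subintervals. The bounds $1-f^{\prime}(x)\le 0$ and $f^{\prime}(x)-\phi\le 0$ on $(0,\infty)$ are exactly $1\le (V_{0,b^{\omega}}^{\omega})^{\prime}(x)\le\phi$ from Lemma~\ref{lem2.6}; on $(-\infty,0]$ one has $(V_{0,b^{\omega}}^{\omega})^{\prime}=\phi$ and on $[b^{\omega},\infty)$ one has $(V_{0,b^{\omega}}^{\omega})^{\prime}=1$ by Lemma~\ref{lem2.2}, so both bounds persist there. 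The generator term $(\mathcal{A}-q)f(x)+\lambda\omega(x)$ equals $0$ on $(0,b^{\omega}]$ and is nonpositive on $(b^{\omega},\infty)$, which is the content of Lemma~\ref{lem2.6}. Thus $\max\{(\mathcal{A}-q)f(x)+\lambda\omega(x),\,1-f^{\prime}(x),\,f^{\prime}(x)-\phi\}\le 0$ holds, and Lemma~\ref{lem2.7} gives $V_{0,b^{\omega}}^{\omega}(x)\ge V_{D,R}^{\omega}(x)$ for every $x\in\mathbb{R}$ and every admissible pair $(D,R)$; taking the supremum over $(D,R)$ yields $V_{0,b^{\omega}}^{\omega}(x)\ge\sup_{D,R}V_{D,R}^{\omega}(x)$.

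For the reverse inequality I would check that the candidate strategy $(D^{(0,b^{\omega})},R^{(0,b^{\omega})})$ is admissible in the sense of~\eqref{valfun}: $D^{(0,b^{\omega})}$ and $R^{(0,b^{\omega})}$ are non-decreasing, right-continuous and adapted with $D^{(0,b^{\omega})}_{0-}=R^{(0,b^{\omega})}_{0-}=0$, $R^{(0,b^{\omega})}$ is continuous and of the form~\eqref{R.form} by construction, and $\mathrm{E}_{x}\big[\int_{0}^{\infty}e^{-qt}\mathrm{d}R^{(0,b^{\omega})}_{t}\big]<\infty$ by~\eqref{exp.cap.dou.bar.}, so in particular $\int_{0}^{\infty}e^{-qt}\mathrm{d}R^{(0,b^{\omega})}_{t}<\infty$ $\mathrm{P}_{x}$-a.s.\ and $U^{(0,b^{\omega})}\ge 0$ throughout. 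Since $V_{0,b^{\omega}}^{\omega}(x)=V_{D^{(0,b^{\omega})},R^{(0,b^{\omega})}}^{\omega}(x)$ by the very definition~\eqref{expdouble}, the supremum in~\eqref{valfun} is attained at this strategy and equality follows.

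I do not anticipate a genuine obstacle: the analytical core — the smooth-fit characterisation of $b^{\omega}$, the concavity and the derivative bounds $1\le(V_{0,b^{\omega}}^{\omega})^{\prime}\le\phi$, and the verification that $(\mathcal{A}-q)V_{0,b^{\omega}}^{\omega}+\lambda\omega\le 0$ — has already been carried out in Lemmas~\ref{lem2.1}--\ref{lem2.7}. The only delicate point is bookkeeping: confirming that each of the three pieces of~\eqref{HJB} is checked on the correct region ($(0,b^{\omega}]$, $(b^{\omega},\infty)$, and $(-\infty,0]$) so that the variational inequality truly holds pointwise, and verifying admissibility of the double barrier strategy so that the bound from the verification lemma is in fact attained.
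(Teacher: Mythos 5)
Your proposal is correct and follows exactly the paper's route: the paper's proof of Theorem~\ref{thm.3.1} is a one-line citation of Lemmas~\ref{lem2.1}, \ref{lem2.2}, \ref{lem2.7} and \ref{lem2.6}, and your write-up is precisely the unpacking of those four references (smoothness from Lemma~\ref{lem2.1}, monotonicity/concavity and derivative endpoint values from Lemma~\ref{lem2.2}, the derivative bounds and the generator inequality from Lemma~\ref{lem2.6}, and then the verification Lemma~\ref{lem2.7} to obtain domination), together with the observation that the double barrier strategy is admissible so the bound is attained.
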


\begin{proof}
The desired conclusion is a direct consequence of Lemmas \ref{lem2.1}, \ref{lem2.2}, \ref{lem2.7} and \ref{lem2.6}.
\end{proof}

\section{Optimality of Regime-modulated Double Barrier Strategy}\label{sec:opt}

We continue to prove the main result Theorem \ref{thm4.1} using results from the previous auxiliary control problem with a final payoff and the recursive iteration based on dynamic programming principle. As preparations, let us first consider the following space of functions
\begin{eqnarray}
\mathcal{B}\hspace{-0.3cm}&:=
&\hspace{-0.3cm}\{\left.f: \mathbb{R}_{+}\times \mathcal{E}\rightarrow \mathbb{R}\,\right|\, \text{for each }  
i\in\mathcal{E}, 
\text{ the function } x\mapsto f(x,i)-x  \nonumber\\
\hspace{-0.3cm}&
&\hspace{-0.3cm}\quad
\text{ is continuous and bounded over } [0,\infty)\},\nonumber
\end{eqnarray}
endowed with the metric
$$\rho(f,g):=\max_{i\in \mathcal{E}}\sup_{x\geq 0}\left|f(x,i)-g(x,i)\right|
=\max_{i\in \mathcal{E}}\sup_{x\geq 0}\left|\left(f(x,i)-x\right)-\left(g(x,i)-x\right)\right|.$$
It is straightforward to check that the metric space $(\mathcal{B},\rho)$ is complete.

The following Lemma \ref{lem.4.0} states that the value function $V$ is an element of $\mathcal{B}$.

\begin{lem}
\label{lem.4.0}
Denote 
$\underline{\delta}:=\min_{i\in\mathcal{E}}\delta_{i}$, $\overline{X}_{t}:=\sup_{s\leq t}X_{s}$, and $\underline{X}_{t}:=\inf_{s\leq t}X_{s}$.
We have that
$$\underline{V}(x,i):=x+\phi\mathrm{E}_{0,i}\left[\int_{0}^{\infty}e^{-\underline{\delta}t}\mathrm{d}\left(\underline{X}_t\wedge 0\right)\right]\leq V(x,i)\leq x+\mathrm{E}_{0,i}\left[\int_{0}^{\infty}e^{-\underline{\delta}t}\mathrm{d}\left(\overline{X}_t\vee 0\right)\right]=:\overline{V}(x,i),$$
for all $(x,i)\in\mathbb{R}_{+}\times\mathcal{E}$.
\end{lem}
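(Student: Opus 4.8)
The plan is to establish the two inequalities separately by exhibiting, respectively, a suboptimal admissible strategy for the lower bound, and a pathwise comparison argument for the upper bound. For the lower bound, the strategy is to use the admissible pair $(D,R)$ that pays no dividends at all, i.e. $D\equiv 0$, and injects only the minimal capital needed to keep the surplus nonnegative, namely $R_t = -\underline{X}_t\wedge 0$. Then $U_t = X_t + R_t \geq 0$ for all $t$, so this pair is admissible. Since $D\equiv 0$, the dividend term vanishes, and
$$V(x,i) \geq -\phi\,\mathrm{E}_{x,i}\left[\int_0^\infty e^{-\int_0^t \delta_{Y_s}\mathrm{d}s}\,\mathrm{d}R_t\right].$$
Because $\delta_{Y_s}\geq \underline{\delta}$ for all $s$, and $R$ is nondecreasing, we have $e^{-\int_0^t\delta_{Y_s}\mathrm{d}s}\leq e^{-\underline{\delta}t}$, hence
$$-\phi\int_0^\infty e^{-\int_0^t\delta_{Y_s}\mathrm{d}s}\,\mathrm{d}R_t \geq -\phi\int_0^\infty e^{-\underline{\delta}t}\,\mathrm{d}R_t = \phi\int_0^\infty e^{-\underline{\delta}t}\,\mathrm{d}(\underline{X}_t\wedge 0),$$
since $\mathrm{d}R_t = -\mathrm{d}(\underline{X}_t\wedge 0)$. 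Finally, shifting the starting point via $X_t = x + (X_t - x)$ and noting that under $\mathrm{P}_{x,i}$ the running infimum of $X-x$ equals that under $\mathrm{P}_{0,i}$, the constant $x$ drops out of the capital injection (the running minimum of $x + (\text{process starting at }0)$ is $x + \underline{X}^{(0)}$, and $x\geq 0$ only helps), and taking expectations gives $V(x,i)\geq \underline{V}(x,i)$. One should be a little careful that $x\geq 0$ means the capital needed is no larger than in the $x=0$ case, which is exactly the direction we want.

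\textbf{For the upper bound}, the idea is a pathwise estimate valid for \emph{every} admissible $(D,R)$. Fix such a pair and write the controlled surplus $U_t = X_t - D_t + R_t \geq 0$. The dividend payments are bounded by what is available in the surplus: heuristically, the total discounted dividends cannot exceed the initial capital $x$ plus all the gains the process $X$ brings in, which over $[0,\infty)$ is controlled by $\overline{X}_\infty\vee 0$ in the discounted sense. More precisely, one integrates by parts / uses the fact that $D$ can only extract what has been accumulated: since $U\geq 0$, we have $D_t \leq x + X_t - x + R_t$, but $R$ is costly and enters with the favorable sign $+\phi\,\mathrm{d}R$ replaced by the actual $-\phi\,\mathrm{d}R$ in the objective... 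The cleanest route is: using $D_0 = R_0 = 0$, $U\ge 0$, and $\phi>1$, bound
$$\int_0^\infty e^{-\int_0^t\delta_{Y_s}\mathrm{d}s}\mathrm{d}D_t - \phi\int_0^\infty e^{-\int_0^t\delta_{Y_s}\mathrm{d}s}\mathrm{d}R_t \leq \int_0^\infty e^{-\int_0^t\delta_{Y_s}\mathrm{d}s}\mathrm{d}(D_t - R_t),$$
and then relate $D_t - R_t = X_t - U_t \leq X_t$, so that integrating by parts the discounted differential of $D-R$ against $X$, together with $\delta_{Y_s}\geq\underline\delta$ and $X_t \le x + \overline{X}_t$, yields the bound $x + \int_0^\infty e^{-\underline\delta t}\,\mathrm{d}(\overline{X}_t\vee 0)$ after taking $\mathrm{E}_{x,i}$; the shift by $x$ again reduces the expectation to one under $\mathrm{P}_{0,i}$.

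\textbf{The main obstacle} I anticipate is the integration-by-parts manipulation in the upper bound: one must carefully handle the discount factor $e^{-\int_0^t\delta_{Y_s}\mathrm{d}s}$, which is itself a (random, but pathwise decreasing and of bounded variation) process depending on $Y$, and justify the interchange of the stochastic integral with the expectation — typically by a localization argument with stopping times $T_m = m\wedge\inf\{t: X_t\geq m\}$ as in the proof of Lemma \ref{lem2.7}, followed by monotone/dominated convergence, using the standing assumption $\mathrm{E}[X_1]<\infty$ (equivalently $\mathrm{E}_{0,i}[\overline{X}_t\vee 0]<\infty$ together with $\underline\delta$... — if $\underline\delta = 0$ one needs the finiteness of the stated expectation to hold, which is part of the claim being a genuine bound rather than $+\infty$). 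A secondary technical point is the measurability and admissibility check of the candidate strategy in the lower bound, and ensuring $\int_0^\infty e^{-\int_0^t\delta_{Y_s}\mathrm{d}s}\,\mathrm{d}R_t < \infty$ so that it lies in the admissible class; this follows from the upper bound $\mathrm{E}_{0,i}[\int_0^\infty e^{-\underline\delta t}\mathrm{d}(\overline{X}_t\vee 0)]<\infty$ applied to $-\underline{X}$, i.e. to the dual process.
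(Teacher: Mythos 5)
Your upper bound is on the right track and matches the paper's approach: for any admissible $(D,R)$, use $\phi>1$ and $\mathrm{d}R\geq 0$ to pass from the objective to $\int e^{-\int_0^t\delta_{Y_s}\mathrm{d}s}\,\mathrm{d}(D_t-R_t)$, then use $D_t-R_t = X_t-U_t \leq X_t \leq \overline{X}_t\vee 0$ and an integration-by-parts comparison of Stieltjes integrals (together with $e^{-\int_0^t\delta_{Y_s}\mathrm{d}s}\leq e^{-\underline{\delta}t}$ and the shift decomposition $\overline{X}_t\vee 0 = x + \overline{X}^{(0)}_t\vee 0$ under $\mathrm{P}_{x,i}$). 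The localization/integrability issues you flag are real but handled as in Lemma \ref{lem2.7}; the paper states this bound very tersely, and your version is, if anything, closer to a full proof.

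Your lower bound, however, has a genuine gap: the strategy $D\equiv 0$, $R_t = -\underline{X}_t\wedge 0$ does \emph{not} yield $\underline{V}(x,i)$. With $D\equiv 0$ the resulting NPV equals $-\phi\,\mathrm{E}_{x,i}\bigl[\int_0^\infty e^{-\int_0^t\delta_{Y_s}\mathrm{d}s}\,\mathrm{d}R_t\bigr]\leq 0$, since $R$ is nondecreasing; after your discounting bound it is $\phi\,\mathrm{E}_{x,i}\bigl[\int_0^\infty e^{-\underline{\delta}t}\,\mathrm{d}(\underline{X}_t\wedge 0)\bigr]$, which is still nonpositive because $t\mapsto\underline{X}_t\wedge 0$ is nonincreasing. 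On the other hand, $\underline{V}(x,i)=x+\phi\,\mathrm{E}_{0,i}\bigl[\int_0^\infty e^{-\underline{\delta}t}\,\mathrm{d}(\underline{X}_t\wedge 0)\bigr]$ grows linearly to $+\infty$ in $x$ (the second term is a constant not depending on $x$). So for $x$ large enough your strategy's NPV is strictly below $\underline{V}(x,i)$, and the claimed inequality does not follow. The "$x\geq 0$ only helps" observation is true (starting higher requires less injection) but it does not produce the additive $x$ term in $\underline{V}$; it only reduces the negative injection cost, and that reduction is bounded above by $\phi x$ in undiscounted terms — it cannot substitute for the $x$ dividend received at time $0$.

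The paper's lower-bound strategy is different and is the one that works: pay the entire initial surplus $x$ as a dividend immediately ($D_t\equiv x$, $t\geq 0$), then pay no further dividends and reflect the resulting process $X_t-x$ at $0$ via capital injection, i.e.\ $R_t=-\inf_{s\leq t}(X_s-x)\wedge 0$. This produces the $x$ term exactly from the initial dividend, and since $X_\cdot - x$ under $\mathrm{P}_{x,i}$ has the law of $X$ under $\mathrm{P}_{0,i}$, the injection term becomes $-\phi\,\mathrm{E}_{0,i}\bigl[\int_0^\infty e^{-\int_0^t\delta_{Y_s}\mathrm{d}s}\,\mathrm{d}(-\underline{X}_t\wedge 0)\bigr]\geq\phi\,\mathrm{E}_{0,i}\bigl[\int_0^\infty e^{-\underline{\delta}t}\,\mathrm{d}(\underline{X}_t\wedge 0)\bigr]$ by the same discounting comparison you use. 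Replacing your lower-bound strategy with this one fixes the argument; the rest of your write-up can stand.
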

\begin{proof}
The lower bound of $V(x.i)$ can be derived if we consider the extreme admissible dividend and capital injection strategy where the manager of the company pays whatever she has as dividends at time $0$, and pays no dividends afterwards and bail out all deficits by injecting capitals.
The upper bound is derived by considering an admissible dividend and capital injection strategy $(\Tilde{D},\Tilde{R})$ where the manager of the company pays every dollar accumulated by $X$ as dividends as early as possible all the way (i.e., $\Tilde{D}_{t}:=\overline{X}_t\vee 0$), and cover all deficits by capital injection (i.e., $\Tilde{R}_{t}:=-\inf_{s\leq t}(X_{s}-(\overline{X}_t\vee 0))$). 
Furthermore, for any an admissible dividend and capital injection strategy $({D},{R})$, by integrating by parts, we have
\begin{eqnarray}\label{4.1}
\hspace{-0.3cm}&&\hspace{-0.3cm}
\mathrm{E}_{x,i}\left[\int_0^{\infty}e^{-\int_0^t\delta_{Y_s}\mathrm{d}s}\mathrm{d}\Tilde{D}_t\right]-\mathrm{E}_{x,i}\left[\int_0^{\infty}e^{-\int_0^t\delta_{Y_s}\mathrm{d}s}\mathrm{d}{D}_t-\phi\int_0^{\infty}e^{-\int_0^t\delta_{Y_s}\mathrm{d}s}\mathrm{d}{R}_t\right]
\nonumber\\
\hspace{-0.3cm}&=&\hspace{-0.3cm}
\mathrm{E}_{x,i}\left[\int_{0+}^{\infty}\delta_{Y_t}e^{-\int_0^t\delta_{Y_s}\mathrm{d}s}\left(\Tilde{D}_t-{D}_t+\phi{R}_t\right)\mathrm{d}t\right]
\nonumber\\
\hspace{-0.3cm}&\geq&\hspace{-0.3cm}
\mathrm{E}_{x,i}\left[\int_{0+}^{\infty}\delta_{Y_t}e^{-\int_0^t\delta_{Y_s}\mathrm{d}s}\left(X_t-{D}_t+{R}_t\right)\mathrm{d}t\right]\geq0,
\end{eqnarray}
where the first inequality follows from the facts that $\Tilde{D}_t=\overline{X}_t\vee0\geq X_t$, $R_t\geq0$ and $\phi>1$, and the second inequality is because the strategy $(D,R)$ is admissible.
Then, using \eqref{4.1} and the arbitrariness of the admissible strategy $(D,R)$ implies that
\begin{eqnarray}
V(x,i)
\leq
\mathrm{E}_{x,i}\left[\int_0^{\infty}e^{-\int_0^t\delta_{Y_s}\mathrm{d}s}\mathrm{d}\Tilde{D}_t\right]
\hspace{-0.3cm}&=&\hspace{-0.3cm}
\mathrm{E}_{x,i}\left[\int_0^{\infty}e^{-\int_0^t\delta_{Y_s}\mathrm{d}s}\mathrm{d}\left(\overline{X}_t\vee 0\right)\right]
\nonumber\\
\hspace{-0.3cm}&=&\hspace{-0.3cm}
x+\mathrm{E}_{0,i}\left[\int_0^{\infty}e^{-\int_0^t\delta_{Y_s}\mathrm{d}s}\mathrm{d}\left(\overline{X}_t\vee 0\right)\right]\leq \overline{V}(x,i),
\end{eqnarray}
where the second equality has used the spatial homogeneity of L\'evy processes.
This completes the proof.
\end{proof}

For any function $f: [0,\infty)\times \mathcal{E}\rightarrow \mathbb{R}$, we define a function $\widehat{f}: [0,\infty)\times \mathcal{E}\rightarrow \mathbb{R}$ that
\begin{eqnarray}
\widehat{f}(x,i):=\sum_{j\in\mathcal{E}, j\neq  i}\frac{\lambda_{ij}}{\lambda_{i}}\int_{-\infty}^{0}\left[f(x+y,j)\mathbf{1}_{\{-y\leq x\}}+(\phi(x+y)+f(0,j))\mathbf{1}_{\{-y>x\}}\right]\mathrm{d}F_{ij}(y),
\end{eqnarray}
where $\lambda_{i}=\sum_{j\neq i}\lambda_{ij}$, and $F_{ij}$ is the distribution function of $J_{ij}$ for $i,j\in\mathcal{E}$.
Note that
\begin{eqnarray}
\label{4.2}
\left|\widehat{f}(x,i)-x\right|
\hspace{-0.3cm}&=&\hspace{-0.3cm}
\left|\sum_{j\in\mathcal{E}, j\neq  i}\frac{\lambda_{ij}}{\lambda_{i}}\int_{-\infty}^{0}\left[\left(f(x+y,j)-x-y\right)\mathbf{1}_{\{-y\leq x\}}+\left(f(0,j)-0\right)\mathbf{1}_{\{-y>x\}}\right.\right.
\nonumber\\
\hspace{-0.3cm}&&\hspace{-0.3cm}
\left.\left.+\phi\left(x+y\right)\mathbf{1}_{\{-y>x\}}+y\mathbf{1}_{\{-y\leq x\}}-x\mathbf{1}_{\{-y>x\}}\right]\mathrm{d}F_{ij}(y)\right|
\nonumber\\
\hspace{-0.3cm}&\leq&\hspace{-0.3cm}
\sum_{j\in\mathcal{E}, j\neq  i}\frac{\lambda_{ij}}{\lambda_{i}}\Big[\rho(f,x)+(\phi+1)\mathrm{E}\left|J_{ij}\right|
\Big]
,\quad (x,i)\in [0,\infty)\times\mathcal{E},
\end{eqnarray}
where we have used the fact that $0<\left|x\right|\vee \left|x+y\right|<\left|y\right|$ on $\{-y>x\}$.
By \eqref{4.2} and $\max_{i,j\in\mathcal{E}}\mathrm{E}\left|J_{ij}\right|<\infty$, one gets that $\widehat{f} \in \mathcal{B}$ when $f\in \mathcal{B}$.

For any function $\mathbf{b}=(b_{i})\in[0,\infty)^{\mathcal{E}}$, denote by $V_{0,\mathbf{b}}(x,i)$ the value function (i.e., the NPV of the accumulated differences between dividends and the costs of capital injections) of the double barrier dividend and capital injection strategy with dynamic upper barrier $b_{Y_t}$ and constant lower barrier $0$. In addition, let us define a mapping $T_{\mathbf{b}}$ acting on $f \in \mathcal{B}$ such that
\begin{eqnarray}
\label{def.Tb}
T_{\mathbf{b}}f(x,i)
\hspace{-0.3cm}&:=&\hspace{-0.3cm}
\mathrm{E}_x^i\bigg[\int_{0}^{\infty}e^{-q_{i}t}\mathrm{d}D_{t}^{b_{i},i}
-\phi\int_{0}^{\infty}e^{-q_{i}t}\mathrm{d}R_{t}^{b_{i},i}+\lambda_{i}\int_{0}^{\infty}e^{-q_{i}t}\widehat{f}(U_{t}^{b_{i},i},i)\mathrm{d}t\bigg],
\end{eqnarray}
where $q_i=\delta_{i}+\lambda_i$ and $\mathrm{E}_x^i$ denotes the expectation operator with respect to the law of the process $X^{i}$ conditioned on the event $\{X_{0}^{i}=x\}$. The process $U_{t}^{b_{i},i}$ is the double-reflected process with upper reflecting barrier $b_{i}\geq0$, lower reflecting barrier $0$, and the underlying risk process $X^{i}$; and $D_{t}^{b_{i},i}$, $R_{t}^{b_{i},i}$ are the cumulative dividends paid and capitals injected, respectively.
In what follows, the scale functions of $X^{i}$ will be denoted by $W_{q,i}$, $Z_{q,i}$ and $\overline{Z}_{q,i}$, whose definitions are given in Section 2.2 where the subscript $i$ is absent.

\begin{lem}\label{lem4.1}
For $\mathbf{b}\in[0,\infty)^{\mathcal{E}}$ and $(x,i)\in R \times \mathcal{E}$, we have
$V_{0,\mathbf{b}}(x,i)=T_{\mathbf{b}}V_{0,\mathbf{b}}(x,i)$.
\end{lem}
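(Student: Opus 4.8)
The plan is to establish the fixed-point identity $V_{0,\mathbf{b}}(x,i)=T_{\mathbf{b}}V_{0,\mathbf{b}}(x,i)$ by a direct path-decomposition argument at the first regime-switching time, exploiting the fact that, up to the first switch of the chain $Y$, the controlled surplus under the regime-modulated double barrier strategy with barriers $(0,b_i)$ coincides with the single-regime double-reflected process $U^{b_i,i}$ driven by $X^i$. First I would fix $(x,i)$ and let $e_{\lambda_i}$ denote the first regime-switching time, which under $\mathrm{P}_{x,i}$ is exponentially distributed with rate $\lambda_i$ and independent of $X^i$. On the event $\{t<e_{\lambda_i}\}$ the surplus under the regime-modulated strategy equals $U^{b_i,i}_t$, the dividend process equals $D^{b_i,i}_t$, and the injected-capital process equals $R^{b_i,i}_t$. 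Splitting the infinite-horizon integrals in the definition of $V_{0,\mathbf{b}}(x,i)$ into the part before $e_{\lambda_i}$ and the part after, and using the strong Markov property of $(X,Y)$ at $e_{\lambda_i}$ together with the definition of $V_{0,\mathbf{b}}$ as the value of the regime-modulated double barrier strategy, the post-switch contribution becomes $e^{-\int_0^{e_{\lambda_i}}\delta_{Y_s}\,\mathrm{d}s}V_{0,\mathbf{b}}(U_{e_{\lambda_i}},Y_{e_{\lambda_i}})$.

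Next I would take expectations and integrate out the exponential time $e_{\lambda_i}$. Since the chain is in state $i$ on $[0,e_{\lambda_i})$, the discount factor $e^{-\int_0^t\delta_{Y_s}\,\mathrm{d}s}$ equals $e^{-\delta_i t}$ there, and conditioning on $\{e_{\lambda_i}\in\mathrm{d}s\}=\lambda_i e^{-\lambda_i s}\,\mathrm{d}s$ converts each $\mathrm{E}_{x,i}\bigl[\int_0^{e_{\lambda_i}} e^{-\delta_i t}\,\mathrm{d}(\cdot)_t\bigr]$ into $\mathrm{E}_x^i\bigl[\int_0^\infty e^{-(\delta_i+\lambda_i)t}\,\mathrm{d}(\cdot)_t\bigr]=\mathrm{E}_x^i\bigl[\int_0^\infty e^{-q_i t}\,\mathrm{d}(\cdot)_t\bigr]$ by the usual Fubini interchange (this is exactly the computation already carried out in the chain of equalities defining $V^\omega_{D,R}$ in \eqref{lap.pari.T}). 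For the terminal term, conditioning on $e_{\lambda_i}=s$, on the state $j\neq i$ that $Y$ jumps to, and on the jump size $J_{ij}$ (with distribution $F_{ij}$), the value $U_{e_{\lambda_i}}=U^{b_i,i}_{s-}-J_{ij}$ reflected at $0$ if it would go negative — so the inner expectation over $(j,J_{ij})$ of $V_{0,\mathbf{b}}(U_{e_{\lambda_i}},Y_{e_{\lambda_i}})$ is precisely $\widehat{V_{0,\mathbf{b}}}(U^{b_i,i}_{s-},i)$ by the definition of the hat operator, where the indicator split $\mathbf{1}_{\{-y\le x\}}$ versus $\mathbf{1}_{\{-y>x\}}$ encodes reflection at $0$ and the associated capital injection cost $\phi$. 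Assembling, the terminal term becomes $\lambda_i\,\mathrm{E}_x^i\bigl[\int_0^\infty e^{-q_i t}\,\widehat{V_{0,\mathbf{b}}}(U^{b_i,i}_t,i)\,\mathrm{d}t\bigr]$, which matches the last term in \eqref{def.Tb}; combining the three pieces yields $V_{0,\mathbf{b}}(x,i)=T_{\mathbf{b}}V_{0,\mathbf{b}}(x,i)$.

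The main obstacle is bookkeeping rather than conceptual: one must verify that all the expectations are finite and that the Fubini interchange between the expectation, the integration over $e_{\lambda_i}$, and the time integrals $\int_0^\infty e^{-q_i t}\,\mathrm{d}(\cdot)_t$ is legitimate. Finiteness of the dividend and capital-injection integrals follows from Lemma \ref{lem.4.0}, which bounds $V_{0,\mathbf{b}}(x,i)$ between $\underline{V}$ and $\overline{V}$ and in particular shows $V_{0,\mathbf{b}}(\cdot,i)-x$ is bounded; integrability of the $\widehat{V_{0,\mathbf{b}}}$ term against $\lambda_i e^{-q_i t}\,\mathrm{d}t$ follows from the estimate \eqref{4.2} together with $\max_{i,j}\mathrm{E}|J_{ij}|<\infty$, which guarantees $\widehat{V_{0,\mathbf{b}}}\in\mathcal{B}$. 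A secondary point requiring care is the treatment of a possible jump of $U^{b_i,i}$ exactly at $e_{\lambda_i}$ (i.e.\ whether one evaluates at $U^{b_i,i}_{s-}$ or $U^{b_i,i}_s$): since $e_{\lambda_i}$ is independent of $X^i$ and has a continuous distribution, $\mathrm{P}_{x,i}(\Delta X^i_{e_{\lambda_i}}\neq 0)=0$, so the distinction is immaterial. With these verifications in place the identity follows.
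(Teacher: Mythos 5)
Your proposal follows essentially the same route as the paper: decompose the NPV of the regime-modulated double barrier strategy at the first switching time $e_{\lambda_i}$, apply the strong Markov property to isolate $e^{-\delta_i e_{\lambda_i}}V_{0,\mathbf{b}}(U^{b_i,i}_{e_{\lambda_i}}+J_{iY_{e_{\lambda_i}}},Y_{e_{\lambda_i}})$ as the continuation term, integrate out the independent exponential time so that the pre-switch integrals become $q_i$-discounted expectations under $\mathrm{E}_x^i$, and then recognize, via the linear extension of $V_{0,\mathbf{b}}$ to negative arguments and the indicator split in the definition of $\widehat{f}$, that the resulting terminal term is exactly $\lambda_i\int_0^\infty e^{-q_i t}\widehat{V_{0,\mathbf{b}}}(U^{b_i,i}_t,i)\,\mathrm{d}t$. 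This is the paper's argument.

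One bookkeeping imprecision worth flagging: you cite Lemma \ref{lem.4.0} as establishing that $V_{0,\mathbf{b}}(\cdot,i)-x$ is bounded (and hence $V_{0,\mathbf{b}}\in\mathcal{B}$, so that both $\widehat{V_{0,\mathbf{b}}}$ and $T_{\mathbf{b}}V_{0,\mathbf{b}}$ are well-defined). But Lemma \ref{lem.4.0} gives $\underline{V}\le V\le\overline{V}$ for the \emph{optimal} value $V$, not for $V_{0,\mathbf{b}}$. The upper bound transfers since $V_{0,\mathbf{b}}\le V\le \overline{V}$, but the lower bound $\underline{V}\le V_{0,\mathbf{b}}$ is not asserted there. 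The paper closes this by writing out the explicit Pistorius resolvent formula \eqref{eq.4.4}--\eqref{eq.4.5} for $V_{0,\mathbf{b}}$, from which boundedness on $[0,b_i]$ and the linear behaviour outside follow directly, and only then deduces $V_{0,\mathbf{b}}\in\mathcal{B}$. Your proof should either invoke that explicit formula (as the paper does) or supply a separate lower bound on the expected discounted cost of capital injection under the $(0,\mathbf{b})$-strategy; as written, the appeal to Lemma \ref{lem.4.0} does not quite cover the lower bound. This is a minor repairable gap; the conceptual structure is correct.
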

\begin{proof}
When $Y_{0}=i$, let $e_{\lambda_{i}}$ be the first time $Y$ switches the regime state.
By the Markov property, the proof of Proposition \ref{prop2.1}, Theorem 1 in \cite{Pistorius03}, as well as the independence between $(X^{i})_{i\in\mathcal{E}}$, $Y$ and $(J_{ij})_{i,j\in\mathcal{E}}$, we can derive
\begin{eqnarray}
V_{0,\mathbf{b}}(x,i)
\hspace{-0.3cm}&=&\hspace{-0.3cm}
\mathrm{E}_{x,i}\bigg[\int_{0}^{e_{\lambda_{i}}}e^{-\delta_{i}t}\mathrm{d}D_{t}^{b_{i},i}
-\phi\int_{0}^{e_{\lambda_{i}}}e^{-\delta_{i}t}\mathrm{d}R_{t}^{b_{i},i}+e^{-\delta_{i}e_{\lambda_{i}}}V_{0,\mathbf{b}}(U_{e_{\lambda_{i}}}^{b_{i},i}+J_{iY_{e_{\lambda_{i}}}},Y_{e_{\lambda_{i}}})\bigg]
\nonumber\\
\hspace{-0.3cm}&=&\hspace{-0.3cm}
\mathrm{E}_x^i\bigg[\int_{0}^{\infty}e^{-q_{i}t}\mathrm{d}D_{t}^{b_{i},i}
-\phi\int_{0}^{\infty}e^{-q_{i}t}\mathrm{d}R_{t}^{b_{i},i}\bigg]
+\sum_{j\neq i}\lambda_{ij}\mathrm{E}_x^i\bigg[\int_{0}^{\infty} e^{-q_{i} t}V_{0,\mathbf{b}}(U_{t}^{b_{i},i}+J_{ij},j)
\mathrm{d}t\bigg]
\nonumber\\
\hspace{-0.3cm}&=&\hspace{-0.3cm}
-\overline{Z}_{q_{i},i}(b_{i}-x)-\frac{\psi_{i}^{\prime}(0+)}{q_{i}}+\frac{Z_{q_{i},i}(b_{i})}{q_{i}W_{q_{i},i}(b_{i})}Z_{q_{i},i}(b_{i}-x)-\frac{\phi Z_{q_{i},i}(b_{i}-x)}{q_{i}W_{q_{i},i}(b_{i})}
\nonumber\\
\hspace{-0.3cm}&&\hspace{-0.3cm}
+\sum_{j\neq i}\lambda_{ij}\int_{-\infty}^{0}\Bigg[
\int_{0}^{b_{i}}V_{0,\mathbf{b}}(y+z,j)\Bigg[\frac{Z_{q_{i},i}(b_{i}-x)W_{q_{i},i}^{\prime+}(y)}{q_{i}W_{q_{i},i}(b_{i})}-W_{q_{i},i}(y-x)\Bigg]\mathrm{d}y
\nonumber\\
\hspace{-0.3cm}&&\hspace{-0.3cm}
+V_{0,\mathbf{b}}(z,j)\frac{Z_{q_{i},i}(b_{i}-x)W_{q_{i},i}(0+)}{q_{i}W_{q_{i},i}(b_{i})}
\Bigg]\mathrm{d}F_{ij}(z),\quad x\in[0,b_{i}], \,\,q_{i}=\delta_{i}+\lambda_{i},
\label{eq.4.4}
\\
V_{0,\mathbf{b}}(x,i)\hspace{-0.3cm}&=&\hspace{-0.3cm}
\left(x-b_{i}+V_{0,\mathbf{b}}(b_{i},i)\right)\mathbf{1}_{[b_{i},\infty)}(x)+
\left(\phi x+V_{0,\mathbf{b}}(0,i)\right)\mathbf{1}_{(-\infty,0)}(x).
\label{eq.4.5}
\end{eqnarray}
Using the above result, the continuity of scale functions, the boundedness of $V_{0,\mathbf{b}}$ over $[0,b_{i}]$ (see; Lemma \ref{lem.4.0}) as well as $\max_{j\neq i}\mathrm{E}|J_{ij}|<\infty$, we can deduce that $V_{0,\mathbf{b}}\in \mathcal{B}$.
By the definition of $T_{\mathbf{b}}$ in \eqref{def.Tb}, the second equality in \eqref{eq.4.4}, the independence between $U_{t}^{b_{i},i}$ and $J_{ij}$ for all $i,j\in\mathcal{E}$ as well as the fact that
\begin{eqnarray}
V_{0,\mathbf{b}}(U_{t}^{b_{i},i}+J_{ij},j)=V_{0,\mathbf{b}}(U_{t}^{b_{i},i}+J_{ij},j)\mathbf{1}_{\{U_{t}^{b_{i},i}\geq-J_{ij}\}}
+
\left(V_{0,\mathbf{b}}(0,j)+\phi\left(U_{t}^{b_{i},i}+J_{ij}\right)\right)\mathbf{1}_{\{U_{t}^{b_{i},i}<-J_{ij}\}},
\nonumber
\end{eqnarray}
we can finally conclude that $V_{0,\mathbf{b}}(x,i)=T_{\mathbf{b}}V_{0,\mathbf{b}}(x,i)$. 
\end{proof}

\begin{lem}\label{lem4.2}
The operator $T_{\mathbf{b}}$ is a contraction on $\mathcal{B}$ under the metric $\rho(\cdot,\cdot)$. In particular, for $f \in \mathcal{B}$, we have that
\begin{eqnarray}\label{V0b.lim}
V_{0,\mathbf{b}}(x,i)=\lim\limits_{n\uparrow\infty}T_{\mathbf{b}}^{n}f(x,i),\quad (x,i)\in [0,\infty)\times\mathcal{E},
\end{eqnarray}
where the convergence is under the metric $\rho(\cdot,\cdot)$ and $T_{\mathbf{b}}^{n}(f):=T_{\mathbf{b}}(T_{\mathbf{b}}^{n-1}(f))$ for $n>1$ with $T_{\mathbf{b}}^{1}:=T_{\mathbf{b}}$.
\end{lem}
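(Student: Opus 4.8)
The plan is to invoke the Banach fixed point theorem on the complete metric space $(\mathcal{B},\rho)$. For this I must check two things: (i) $T_{\mathbf{b}}$ maps $\mathcal{B}$ into itself, and (ii) $T_{\mathbf{b}}$ is a strict contraction with respect to $\rho$. Granting these, $T_{\mathbf{b}}$ has a unique fixed point which, by Lemma \ref{lem4.1}, must coincide with $V_{0,\mathbf{b}}$ (recall $V_{0,\mathbf{b}}\in\mathcal{B}$ and $V_{0,\mathbf{b}}=T_{\mathbf{b}}V_{0,\mathbf{b}}$); the geometric convergence of the iterates then yields \eqref{V0b.lim} for an arbitrary $f\in\mathcal{B}$.

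For the invariance $T_{\mathbf{b}}(\mathcal{B})\subseteq\mathcal{B}$: fix $f\in\mathcal{B}$. We already know $\widehat{f}\in\mathcal{B}$ (see the discussion following \eqref{4.2}), so in particular $\widehat{f}(\cdot,i)$ is continuous on $[0,\infty)$ and bounded on the compact interval $[0,b_i]$. Repeating the computation in the proof of Lemma \ref{lem4.1} — that is, the dividend and capital-injection fluctuation identities together with Theorem 1 of \cite{Pistorius03} for the occupation term — one obtains an explicit representation of $T_{\mathbf{b}}f(\cdot,i)$ on $[0,b_i]$ in terms of the scale functions $W_{q_i,i},Z_{q_i,i},\overline{Z}_{q_i,i}$ and $\widehat{f}$, analogous to \eqref{eq.4.4}, while $T_{\mathbf{b}}f(x,i)=x-b_i+T_{\mathbf{b}}f(b_i,i)$ for $x>b_i$ as in \eqref{eq.4.5}. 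Continuity of the scale functions then gives that $x\mapsto T_{\mathbf{b}}f(x,i)-x$ is continuous on $[0,\infty)$ and bounded (being continuous on $[0,b_i]$ and constant on $[b_i,\infty)$), i.e. $T_{\mathbf{b}}f\in\mathcal{B}$.

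For the contraction estimate, take $f,g\in\mathcal{B}$. In \eqref{def.Tb} the dividend and capital-injection terms do not involve $f$, so they cancel in the difference and
\[
T_{\mathbf{b}}f(x,i)-T_{\mathbf{b}}g(x,i)=\lambda_i\,\mathrm{E}_x^i\!\left[\int_{0}^{\infty}e^{-q_i t}\big(\widehat{f}(U_t^{b_i,i},i)-\widehat{g}(U_t^{b_i,i},i)\big)\mathrm{d}t\right].
\]
From the definition of the $\widehat{\cdot}$ operator, on $\{-y\le z\}$ the relevant integrand is $f(z+y,j)-g(z+y,j)$ and on $\{-y>z\}$ it is $f(0,j)-g(0,j)$, the linear terms $\phi(z+y)$ dropping out since they do not depend on $f$; bounding each of these differences by $\rho(f,g)$ and using $\sum_{j\ne i}\lambda_{ij}/\lambda_i=1$ and $\int_{-\infty}^0\mathrm{d}F_{ij}=1$ gives
\[
\sup_{z\ge 0}\big|\widehat{f}(z,i)-\widehat{g}(z,i)\big|\le\rho(f,g).
\]
Since $\mathrm{E}_x^i[\int_0^\infty e^{-q_i t}\mathrm{d}t]=1/q_i$, it follows that $|T_{\mathbf{b}}f(x,i)-T_{\mathbf{b}}g(x,i)|\le(\lambda_i/q_i)\,\rho(f,g)$ for every $x\ge 0$; taking the supremum over $x$ and the maximum over $i$ yields $\rho(T_{\mathbf{b}}f,T_{\mathbf{b}}g)\le\alpha\,\rho(f,g)$ with $\alpha:=\max_{i\in\mathcal{E}}\lambda_i/q_i=\max_{i\in\mathcal{E}}\lambda_i/(\delta_i+\lambda_i)<1$.

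The remaining step is routine: $(\mathcal{B},\rho)$ is complete, so the $\alpha$-contraction $T_{\mathbf{b}}$ has a unique fixed point and $T_{\mathbf{b}}^n f$ converges to it in $\rho$ for every $f\in\mathcal{B}$; by Lemma \ref{lem4.1} the fixed point is $V_{0,\mathbf{b}}$, which is exactly \eqref{V0b.lim}. I expect the only mildly delicate point to be the invariance $T_{\mathbf{b}}(\mathcal{B})\subseteq\mathcal{B}$ — specifically, checking that the linear growth of $\widehat{f}(\cdot,i)$ causes no trouble because the occupation measure of the doubly reflected process $U^{b_i,i}$ is carried by the compact set $[0,b_i]$; the contraction factor $\lambda_i/(\delta_i+\lambda_i)$, by contrast, falls out of the bookkeeping immediately.
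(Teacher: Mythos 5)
Your proposal is correct and takes essentially the same route as the paper: check $T_{\mathbf{b}}(\mathcal{B})\subseteq\mathcal{B}$ via the explicit scale-function representation, establish the contraction estimate, and invoke Banach's fixed point theorem together with Lemma \ref{lem4.1}. Your contraction factor $\max_i\lambda_i/(\delta_i+\lambda_i)$ is identical to the paper's $\beta=\max_i\mathrm{E}_0^i[e^{-\delta_i e_{\lambda_i}}]$ (the paper simply packages $\lambda_i\int_0^\infty e^{-q_i t}\,\mathrm{d}t$ as $\mathrm{E}[e^{-\delta_i e_{\lambda_i}}]$); the only cosmetic difference is that you first bound $\sup_{z}|\widehat{f}(z,i)-\widehat{g}(z,i)|\le\rho(f,g)$ and then integrate the discount, whereas the paper keeps the expectation over $e_{\lambda_i}$ explicit throughout.
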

\begin{proof}
Recall that the metric space $(\mathcal{B},\rho)$ is complete. By Proposition \ref{prop2.1}, for $f\in \mathcal{B}$, we deduce that
\begin{eqnarray}
\label{ref4.7}
\big|T_{\mathbf{b}}f(x,i)-x\big|
\hspace{-0.3cm}&=&\hspace{-0.3cm}
\bigg|\mathrm{E}_x^i\bigg[\int_{0}^{\infty}e^{-q_{i}t}\mathrm{d}D_{t}^{b_{i},i}
-\phi\int_{0}^{\infty}e^{-q_{i}t}\mathrm{d}R_{t}^{b_{i},i}+\lambda_{i}\int_{0}^{\infty}e^{-q_{i}t}\widehat{f}(U_{t}^{b_{i},i},i)\mathrm{d}t\bigg]-x\bigg|
\nonumber\\
\hspace{-0.3cm}&=&\hspace{-0.3cm}
\bigg|-\overline{Z}_{q_{i},i}(b_{i}-x)-\frac{\psi_{i}^{\prime}(0+)}{q_i}+\frac{\lambda_i}{q_i}\bigg[\widehat{f}(0,i)+\int_{0}^{b_i}\widehat{f}^{\prime}(y,i)Z_{q_{i},i}(y-x)\mathrm{d}y\bigg]
\nonumber\\
\hspace{-0.3cm}&&\hspace{-0.3cm}
+\frac{Z_{q_{i},i}(b_{i}-x)}{q_{i}W_{q_{i},i}(b_i)}\Big[Z_{q_{i},i}(b_i)-\phi-\lambda_{i}\int_{0}^{b_i}\widehat{f}^{\prime}(y,i)W_{q_{i},i}(y)\mathrm{d}y\Big]-x\bigg|,\quad x\in [0,b_i],
\end{eqnarray}
and
\begin{eqnarray}
\label{ref4.8}
|T_{\mathbf{b}}f(x,i)-x|\hspace{-0.3cm}&=&\hspace{-0.3cm}\left|-b_i
-\frac{\psi_{i}^{\prime}(0+)}{q_i}+\frac{\lambda_i}{q_i}\bigg[\widehat{f}(0,i)+\int_{0}^{b_i}\widehat{f}^{\prime}(y,i)\mathrm{d}y\bigg]\right.
\nonumber\\
\hspace{-0.3cm}&&\hspace{-0.3cm}
+\left.\frac{1}{q_{i}W_{q_{i},i}(b_i)}\bigg[Z_{q_{i},i}(b_i)-\phi-\lambda_{i}\int_{0}^{b_i}\widehat{f}^{\prime}(y,i)W_{q_{i},i}(y)\mathrm{d}y\bigg]
\right|,\quad x \in (b_i,\infty).
\end{eqnarray}
By \eqref{ref4.7}, \eqref{ref4.8}, and the fact that $\widehat{f}\in \mathcal{B}$, it holds that  $T_{\mathbf{b}}f\in \mathcal{B}$ for $f\in \mathcal{B}$. Moreover, for $f,g \in \mathcal{B}$, we can get that
\begin{eqnarray}\label{metric}
\rho(T_{\mathbf{b}}f,T_{\mathbf{b}}g)
\hspace{-0.3cm}&=&\hspace{-0.3cm}
\sup_{i\in \mathcal{E},\,x\geq 0}\mathrm{E}_x^i\Bigg[
e^{-\delta_{i}e_{\lambda_i}}\sum_{j\in\mathcal{E}, j\neq  i}\frac{\lambda_{ij}}{\lambda_{i}}\int_{-\infty}^{-U^{b_{i},i}_{e_{\lambda_i}}}\left|f(0,j)-g(0,j)\right|\mathrm{d}F_{ij}(y)
\nonumber\\
\hspace{-0.3cm}&&\hspace{-0.3cm}
+e^{-\delta_{i}e_{\lambda_i}}\sum_{j\in\mathcal{E}, j\neq  i}\frac{\lambda_{ij}}{\lambda_{i}}\int_{-U^{b_{i},i}_{e_{\lambda_i}}}^{0}\left|f(U^{b_{i},i}_{e_{\lambda_i}}+y,j)-g(U^{b_{i},i}_{e_{\lambda_i}}+y,j)\right|\mathrm{d}F_{ij}(y)
\Bigg]
\nonumber\\
\hspace{-0.3cm}&\leq&\hspace{-0.3cm}
\rho(f,g)\sup_{i\in \mathcal{E}}\mathrm{E}_0^i\big[e^{-\delta_{i}e_{\lambda_i}}\big]
\nonumber\\
\hspace{-0.3cm}&:=&\hspace{-0.3cm}
\beta \rho(f,g),\quad \beta\in(0,1).
\end{eqnarray}
By (\ref{metric}), for $f\in \mathcal{B}$, $(T_{\mathbf{b}}^nf)_{n\geq1}$ is a Cauchy sequence. Hence, we have that
$$T_{\mathbf{b}}^{\infty}f:=\lim\limits_{n\uparrow \infty}T_{\mathbf{b}}^nf=T_{\mathbf{b}}(\lim\limits_{n\uparrow\infty}T_{\mathbf{b}}^nf)=T_{\mathbf{b}}(T_{\mathbf{b}}^{\infty}f), \quad f\in \mathcal{B},$$
which implies that $T_{\mathbf{b}}^{\infty}f$ is a fixed point of the mapping $T_{\mathbf{b}}$. By lemma \ref{lem4.1}, we obtain (\ref{V0b.lim}) as desired.
\end{proof}

Let us define another space of functions that
\begin{eqnarray}
\mathcal{C}:=\{\left.f\in\mathcal{B}\,\right|\,\widehat{f}(x,i) \text{ is concave and }1\leq \widehat{f}^{\prime}(x,i)\leq \phi \text{ for all }x\in[0,\infty)\text{ and } i\in\mathcal{E}\}.
\end{eqnarray}

\begin{lem}\label{lem4.3}
Suppose that $f\in\mathcal{B}\cap C^{1}(\mathbb{R}_{+})$ is concave, non-decreasing, and satisfies $1\leq f^{\prime}(x,i)\leq \phi$ over $\mathbb{R}_{+}\times\mathcal{E}$, we have that $f\in \mathcal{C}$.
\end{lem}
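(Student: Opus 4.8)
The goal is to show that if $f\in\mathcal{B}\cap C^1(\mathbb{R}_+)$ is concave, non-decreasing, with $1\le f'(x,i)\le\phi$ for all $(x,i)$, then $f\in\mathcal{C}$; that is, for each $i$, the function $x\mapsto\widehat f(x,i)$ is concave with $1\le\widehat f'(x,i)\le\phi$. The plan is to work directly from the definition of $\widehat f$ and differentiate under the integral sign. First I would recall that
$$
\widehat f(x,i)=\sum_{j\ne i}\frac{\lambda_{ij}}{\lambda_i}\int_{-\infty}^{0}\Big[f(x+y,j)\mathbf 1_{\{-y\le x\}}+\big(\phi(x+y)+f(0,j)\big)\mathbf 1_{\{-y>x\}}\Big]\,\mathrm dF_{ij}(y),
$$
and observe that the inner bracket, as a function of $x$ for fixed $y\le 0$, is exactly the value at $x+y$ of the \emph{extension} of $f(\cdot,j)$ to the whole real line by $\tilde f_j(u):=f(u,j)$ for $u\ge 0$ and $\tilde f_j(u):=\phi u+f(0,j)$ for $u<0$. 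Since $f(\cdot,j)\in C^1(\mathbb{R}_+)$ is concave with $f'(0+,j)\le\phi$ and $f'(x,j)\ge 1$, this extension $\tilde f_j$ is $C^1$ on all of $\mathbb R$ (the left derivative $\phi$ at $0$ matches $f'(0+,j)$ only when $f'(0+,j)=\phi$; otherwise there is a concave kink, still giving a concave function), non-decreasing, concave, and satisfies $1\le\tilde f_j'(u)\le\phi$ everywhere. Thus
$$
\widehat f(x,i)=\sum_{j\ne i}\frac{\lambda_{ij}}{\lambda_i}\int_{-\infty}^{0}\tilde f_j(x+y)\,\mathrm dF_{ij}(y).
$$

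Next I would establish the three claimed properties. Concavity of $\widehat f(\cdot,i)$ is immediate: for each fixed $y$, $x\mapsto\tilde f_j(x+y)$ is concave, concavity is preserved by the nonnegative integral $\int\cdot\,\mathrm dF_{ij}(y)$ and by the nonnegative finite sum $\sum_{j\ne i}\lambda_{ij}/\lambda_i$ (which has weights summing to $1$). For the derivative bounds, differentiate under the integral: since $|\tilde f_j'|\le\phi$ uniformly and $F_{ij}$ is a probability distribution, dominated convergence justifies
$$
\widehat f'(x,i)=\sum_{j\ne i}\frac{\lambda_{ij}}{\lambda_i}\int_{-\infty}^{0}\tilde f_j'(x+y)\,\mathrm dF_{ij}(y),
$$
and since $1\le\tilde f_j'(u)\le\phi$ for every $u$, averaging with weights that sum to $1$ gives $1\le\widehat f'(x,i)\le\phi$. (At the single point where the extension has a kink one uses the right derivative, consistently with the convention $\widehat f'$ elsewhere in the paper; this causes no difficulty since the monotone bounds hold for both one-sided derivatives.) Finally, $\widehat f\in\mathcal B$ was already recorded in the discussion following \eqref{4.2}, so all the defining conditions of $\mathcal{C}$ are met.

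The only mildly delicate point is the regularity of the extension $\tilde f_j$ at $0$ and the legitimacy of differentiating under the integral and the Stieltjes measure $\mathrm dF_{ij}$; this is handled by the uniform bound $1\le\tilde f_j'\le\phi$ (so the difference quotients of $x\mapsto\tilde f_j(x+y)$ are uniformly bounded by $\phi$, permitting dominated convergence against the finite measure $\mathrm dF_{ij}$), together with monotonicity of these difference quotients coming from concavity. I do not expect any genuine obstacle here; the proof is essentially the observation that averaging concave functions with Lipschitz-type derivative bounds preserves both concavity and the derivative bounds.
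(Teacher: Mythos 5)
Your proof is correct and follows essentially the same route as the paper: you rewrite $\widehat f(x,i)$ as an $F_{ij}$-average of the concave extension $\tilde f_j(x+y)$ (the paper writes the same decomposition slightly differently, splitting off the affine part $\phi(x+\mathrm E[J_{ij}])+f(0,j)$), then differentiate under the integral and use $1\le f'\le\phi$ together with $\sum_{j\ne i}\lambda_{ij}/\lambda_i=1$ to get the derivative bounds and concavity. The small caveat about the one-sided derivative at the kink of $\tilde f_j$ is a point the paper glosses over, and your handling of it is correct; note only that the parenthetical claim that $\tilde f_j$ is $C^1$ on all of $\mathbb R$ is overstated, since, as you immediately say, there is generally a concave kink at $0$.
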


\begin{proof}
By definition, $\widehat{f}$ can be rewritten as
\begin{eqnarray}
\widehat{f}(x,i)
\hspace{-0.3cm}&=&\hspace{-0.3cm}
\sum_{j\in\mathcal{E}, j\neq  i}\frac{\lambda_{ij}}{\lambda_{i}}\bigg[\int_{-x}^{0}\big[f(x+y,j)-(\phi(x+y)+f(0,j))\big]\mathrm{d}F_{ij}(y)+\phi(x+\mathrm{E}[J_{ij}])+f(0,j)\bigg],
\nonumber
\end{eqnarray}
which implies that
\begin{eqnarray}
\label{f3.3}
\widehat{f}^{\prime}(x,i)=\sum_{j\in\mathcal{E}, j\neq  i}\frac{\lambda_{ij}}{\lambda_{i}}\bigg[\phi+\int_{-x}^{0}\big[f^{\prime}(x+y,j)-\phi\big]\mathrm{d}F_{ij}(y)\bigg].
\end{eqnarray}
This result, together with the concavity of $f$, yields the concavity of $\widehat{f}(x,i)$. In addition,
by \eqref{f3.3} and the fact that $1-\phi\leq f^{\prime}(x+y,j)-\phi\leq 0$, one can deduce that
\begin{eqnarray}
1=\sum_{j\in\mathcal{E}, j\neq  i}\frac{\lambda_{ij}}{\lambda_{i}}
\leq
\sum_{j\in\mathcal{E}, j\neq  i}\frac{\lambda_{ij}}{\lambda_{i}}\bigg[\phi+\int_{-x}^{0}(1-\phi)\mathrm{d}F_{ij}(y)\bigg]\leq
\widehat{f}^{\prime}(x,i)\leq \phi \sum_{j\in\mathcal{E}, j\neq  i}\frac{\lambda_{ij}}{\lambda_{i}}=\phi.\nonumber
\end{eqnarray}
The proof is completed.
\end{proof}

For $f\in \mathcal{C}$ and $(x,i)\in \mathbb{R}_{+} \times \mathcal{E}$, let us define another operator $T_{\sup}$ that
\begin{eqnarray}\label{aux.oper}
T_{\sup} f(x,i)
\hspace{-0.3cm}&:=&\hspace{-0.3cm}
\sup_{D,R}\mathrm{E}_{x,i}\bigg[\int_{0}^{e_{\lambda_{i}}}e^{-\delta_{i}t}\mathrm{d}D_t
-\phi\int_{0}^{e_{\lambda_{i}}}e^{-\delta_{i}t}\mathrm{d}R_t
+e^{-\delta_{i}e_{\lambda_{i}}}\widehat{f}(U_{e_{\lambda_{i}}-},i)\bigg]
\nonumber\\
\hspace{-0.3cm}&=&\hspace{-0.3cm}
\sup_{D^i,R^i}\mathrm{E}_x^i\bigg[\int_{0}^{\infty}e^{-q_{i}t}\mathrm{d}D^{i}_{t}-\phi\int_{0}^{\infty}e^{-q_{i}t}\mathrm{d}R^{i}_t+\lambda_{i}\int_{0}^{\infty}e^{-q_{i}t}\widehat{f}(U^{i}_t,i)\mathrm{d}t\bigg],
\end{eqnarray}
where $U^{i}_{t}=X^{i}_{t}-D^{i}_{t}+R^{i}_{t}$ represents the controlled surplus process with control $(D^{i},R^{i})$ and driving process $X^{i}$.

We denote $\underline{V}_0:=\underline{V}$ and $\overline{V}_0:=\overline{V}$ as well as $\underline{V}_n:=T_{\sup} (\underline{V}_{n-1})$ and $\overline{V}_n:=T_{\sup} (\overline{V}_{n-1})$, for $n \geq 1$.

\begin{lem}\label{pro4.1}
We have $\underline{V}_n\leq V\leq \overline{V}_n$ on $\mathbb{R}_{+}\times\mathcal{E}$ for all $n \geq 1$, and
\begin{eqnarray}\label{V.lim}
V(x,i)=\lim\limits_{n\uparrow\infty}\underline{V}_n(x,i)=\lim\limits_{n\uparrow\infty}\overline{V}_n(x,i),\quad (x,i) \in \mathbb{R}_{+}\times \mathcal{E},
\end{eqnarray}
where the convergence is under the metric $\rho(\cdot,\cdot)$. Moreover, we have $V\in \mathcal{C}$.
\end{lem}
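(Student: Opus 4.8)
The plan is to exploit the dynamic programming identity \eqref{dpp}, which says precisely that $V = T_{\sup}V$ once we identify $\widehat{V}$ with the regime-switched terminal payoff; combined with monotonicity of $T_{\sup}$ and the sandwiching bounds $\underline V_0 \le V \le \overline V_0$ from Lemma \ref{lem.4.0}, this will give $\underline V_n \le V \le \overline V_n$ by induction, and a contraction argument will force both sequences to converge to $V$. First I would record the monotonicity of $T_{\sup}$: if $f \le g$ pointwise on $\mathbb{R}_+\times\mathcal{E}$, then $\widehat f \le \widehat g$ (immediate from the definition of $\widehat{\cdot}$, since it is an average of evaluations of $f$ against probability measures, plus terms not involving $f$ on $\{-y>x\}$ that are common to both — actually the $\phi(x+y)+f(0,j)$ term is monotone in $f$ too), and hence $T_{\sup}f \le T_{\sup}g$ because the supremum over $(D,R)$ of a monotone-in-$f$ integrand is monotone. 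Then since $\underline V_0 = \underline V \le V \le \overline V = \overline V_0$ and $T_{\sup}V = V$ by \eqref{dpp}, applying $T_{\sup}$ repeatedly yields $\underline V_n = T_{\sup}^n \underline V_0 \le T_{\sup}^n V = V \le T_{\sup}^n \overline V_0 = \overline V_n$ for all $n\ge 1$.

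Next I would establish that $T_{\sup}$ maps $\mathcal C$ into itself and is a contraction on $(\mathcal B,\rho)$. For the self-map property: if $f\in\mathcal C$, then by Theorem \ref{thm.3.1} applied to the single spectrally positive L\'evy process $X^i$ with final payoff $\omega(\cdot)=\widehat f(\cdot,i)$ — whose hypotheses ($\omega$ continuous, concave, $\omega'_+(0+)\le\phi$, $\omega'_+(\infty)\in[0,1]$) are exactly the content of $f\in\mathcal C$ via $1\le\widehat f'\le\phi$ and concavity — the supremum in \eqref{aux.oper} is attained by the double barrier strategy $(0,b_i^{\widehat f(\cdot,i)})$, and $T_{\sup}f(\cdot,i) = V^{\widehat f(\cdot,i)}_{0,b_i}(\cdot)$. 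By Lemma \ref{lem2.2} this function is concave, non-decreasing, $C^1$ on $\mathbb R_+$ with derivative in $[1,\phi]$, and by \eqref{ref4.7}--\eqref{ref4.8}-type estimates it lies in $\mathcal B$; Lemma \ref{lem4.3} then gives $T_{\sup}f\in\mathcal C$. For the contraction: the estimate is identical in spirit to \eqref{metric} in Lemma \ref{lem4.2}, because the difference $T_{\sup}f - T_{\sup}g$ is controlled by $\mathrm{E}^i_x[e^{-\delta_i e_{\lambda_i}}|\widehat f(U_{e_{\lambda_i}-},i)-\widehat g(U_{e_{\lambda_i}-},i)|]$ (the dividend/injection terms cancel in the difference when the same optimal barrier strategy is used as a competitor on both sides), and $|\widehat f - \widehat g|\le\rho(f,g)$, giving $\rho(T_{\sup}f,T_{\sup}g)\le\beta\rho(f,g)$ with $\beta=\sup_i\mathrm E^i_0[e^{-\delta_i e_{\lambda_i}}]<1$.

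With the contraction in hand, both $(\underline V_n)$ and $(\overline V_n)$ are Cauchy in $(\mathcal B,\rho)$, hence converge to the unique fixed point of $T_{\sup}$; since $V$ itself is a fixed point of $T_{\sup}$ by \eqref{dpp} and lies in $\mathcal B$ by Lemma \ref{lem.4.0}, uniqueness of the fixed point forces $\lim_n\underline V_n = \lim_n\overline V_n = V$, which is \eqref{V.lim}. Finally, to conclude $V\in\mathcal C$: starting from $\underline V_0$ (or $\overline V_0$), which need not be in $\mathcal C$ a priori, we have $\underline V_1 = T_{\sup}\underline V_0$, and I would check that one application of $T_{\sup}$ already lands in $\mathcal C$ — indeed $\widehat{\underline V_0}$ is concave with derivative in $[1,\phi]$ (this needs a short separate verification of concavity of $\underline V$ and $\overline V$, using that $x\mapsto\mathrm E_{0,i}[\int_0^\infty e^{-\underline\delta t}\mathrm d(\underline X_t\wedge 0)]$ is constant in $x$ so $\underline V(x,i)=x+\text{const}$, hence affine, hence trivially in the required class, and similarly $\overline V$) so Theorem \ref{thm.3.1} and Lemma \ref{lem4.3} give $\underline V_n\in\mathcal C$ for all $n\ge1$; since $\mathcal C$ is closed under $\rho$-convergence (concavity, the derivative bounds, and membership in $\mathcal B$ all pass to uniform limits — concavity and the one-sided derivative bounds survive because $C^1$ convergence is not needed, only that the limit of concave functions with chords of bounded slope retains those properties), the limit $V$ lies in $\mathcal C$.

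\textbf{Main obstacle.} The delicate point is the self-map property $T_{\sup}(\mathcal C)\subseteq\mathcal C$ together with closedness of $\mathcal C$: one must verify carefully that when $f\in\mathcal C$ the induced payoff $\omega=\widehat f(\cdot,i)$ genuinely satisfies all the standing hypotheses of Section \ref{sec:aux} (in particular $\omega'_+(0+)\le\phi$ and $\omega'_+(\infty)\in[0,1]$, which require combining \eqref{f3.3} with $F_{ij}(0-)=1$ and the tail behaviour of $F_{ij}$), so that Theorem \ref{thm.3.1} is applicable and yields the concavity/derivative structure of $T_{\sup}f$; and one must check that $\mathcal C$ is $\rho$-closed, i.e.\ that uniform limits of functions whose $\widehat{\cdot}$-transforms are concave with derivative in $[1,\phi]$ again have this property — this is where the unboundedness of the value functions (noted in the introduction as a new difficulty versus \cite{NPY2020}) must be handled by working throughout with $f(x,i)-x$ rather than $f$ itself.
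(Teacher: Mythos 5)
Your proposal is correct and follows essentially the same route as the paper: induction on $n$ using $T_{\sup}V=V$ from the DPP and monotonicity of $T_{\sup}$ for the sandwiching, the contraction property of $T_{\sup}$ on $(\mathcal{C},\rho)$ (reduced to Lemma~\ref{lem4.2}'s estimate via Theorem~\ref{thm.3.1}) for the convergence, and closedness of $\mathcal{C}$ under $\rho$-limits for the final membership $V\in\mathcal{C}$. Your explicit check that $\underline{V}_0$ and $\overline{V}_0$ are affine in $x$ (hence in $\mathcal{C}$ via Lemma~\ref{lem4.3}) is a useful detail that the paper leaves implicit when asserting $(\underline{V}_n)_{n\geq 1}\subseteq\mathcal{C}$.
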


\begin{proof}
The first claim of Lemma \ref{pro4.1} can be verified by the method of induction. In fact, by Lemma \ref{lem.4.0}, we have $\underline{V}_0\leq V\leq \overline{V}_0$. Suppose that $\underline{V}_{n-1}\leq V\leq \overline{V}_{n-1}$, then
$$\underline{V}_n=T_{\sup} \underline{V}_{n-1} \leq T_{\sup} V \leq T_{\sup} \overline{V}_{n-1}=\overline{V}_{n},$$
which, together with the fact that $V$ is a fixed point of the mapping $T_{\sup}$, implies the desired claim that $\underline{V}_n\leq V\leq \overline{V}_n$ for all $n\geq 1$.

To prove the second claim of Lemma \ref{pro4.1}, for any $f \in \mathcal{C}$ and $i \in \mathcal{E}$, Theorem \ref{thm.3.1} guarantees the existence of $b^{f}_i\in (0,\infty)$ such that the second equality of (\ref{aux.oper}) is achieved by the expected NPV under a double barrier strategy with upper barrier $b^{f}_{i}$ and lower barrier $0$. Denote $\mathbf{b}^{f}=(b^{f}_{i})_{i\in \mathcal{E}}$, it follows that $T_{\sup} f=T_{\mathbf{b}^{f}}f$ over $\mathbb{R}_{+}\times \mathcal{E}$, which, together with Lemmas \ref{lem2.1}-\ref{lem2.2}, implies that $T_{\sup} f(\cdot,i)\in C^{1}(0,\infty)$ and it is concave as well as $1\leq (T_{\sup} f)^{\prime}(\cdot,i)\leq \phi$ over $[0,\infty)$ for all $i\in\mathcal{E}$. Hence, $T_{\sup} f \in \mathcal{C}$ (see; Lemma \ref{lem4.3}), and
$$\rho(T_{\sup} f,T_{\sup} g)=\rho(T_{\mathbf{b}^{f}} f, T_{\mathbf{b}^{g}} g)
=\rho(\sup_{\mathbf{b}}T_{\mathbf{b}} f, \sup_{\mathbf{b}}T_{\mathbf{b}} g)
\leq \sup_{\mathbf{b}}\rho(T_{\mathbf{b}}f,T_{\mathbf{b}}g)\leq \beta \rho(f,g), \quad \beta\in(0,1),$$ i.e., $T_{\sup}$ is a contraction mapping from $\mathcal{C}$ to itself.
Hence, the Cauchy sequences $(\underline{V}_n)_{n\geq 1}$ and $(\overline{V}_n)_{n\geq 1}$ converge to the unique fixed point $V$ of $T_{\sup}$.
In addition, by (\ref{V.lim}) and the dominated convergence theorem, we have that
\begin{eqnarray}\label{V.hat}
\widehat{V}(x,i)
\hspace{-0.3cm}&=&\hspace{-0.3cm}
\sum_{j\in\mathcal{E}, j\neq  i}\frac{\lambda_{ij}}{\lambda_{i}}\int_{-\infty}^{0}\Big[V(x+y,j)\mathbf{1}_{\{-y\leq x\}}
+(\phi(x+y)+V(0,j))\mathbf{1}_{\{-y>x\}}\Big]\mathrm{d}F_{ij}(y)
\nonumber\\
\hspace{-0.3cm}&=&\hspace{-0.3cm}
\lim\limits_{n\rightarrow\infty}\sum_{j\in\mathcal{E}, j\neq  i}\frac{\lambda_{ij}}{\lambda_{i}}\int_{-\infty}^{0}\Big[\underline{V}_n(x+y,j)\mathbf{1}_{\{-y\leq x\}}
+(\phi(x+y)+\underline{V}_n(0,j))\mathbf{1}_{\{-y>x\}}\Big]\mathrm{d}F_{ij}(y)
\nonumber\\
\hspace{-0.3cm}&=&\hspace{-0.3cm}
\lim\limits_{n\rightarrow\infty}\sum_{j\in\mathcal{E}, j\neq  i}\frac{\lambda_{ij}}{\lambda_{i}}\int_{-\infty}^{0}\Big[\overline{V}_n(x+y,j)\mathbf{1}_{\{-y\leq x\}}
+(\phi(x+y)+\overline{V}_n(0,j))\mathbf{1}_{\{-y>x\}}\Big]\mathrm{d}F_{ij}(y)
\nonumber\\
\hspace{-0.3cm}&=&\hspace{-0.3cm}
\lim\limits_{n\rightarrow\infty}\widehat{\underline{V}_{n}}(x,i)
=\lim\limits_{n\rightarrow\infty}\widehat{\overline{V}_{n}}(x,i)
,\quad (x,i)\in \mathbb{R}_{+}\times \mathcal{E}.
\end{eqnarray}
By (\ref{V.hat}) and the facts that $(\underline{V}_n)_{n\geq 1}\subseteq \mathcal{C}$ and $(\overline{V}_n)_{n\geq 1}\subseteq \mathcal{C}$, we obtain that $V\in \mathcal{C}$.
\end{proof}

Finally, we can give the proof of Theorem \ref{thm4.1} using the previous preparations.

\begin{proof}[Proof of Theorem \ref{thm4.1}]
From Lemma \ref{pro4.1}, it follows that $V\in \mathcal{C}$. This result and Theorem \ref{thm.3.1} imply that there exists a function $\mathbf{b}^{V}=(b_{i}^{V})_{i\in\mathcal{E}}\in(0,\infty)^{\mathcal{E}}$ such that
$V(x,i)=T_{\sup}V(x,i)=T_{\mathbf{b}^{V}}V(x,i)$ for all $(x,i)\in\mathbb{R}_{+}\times \mathcal{E}$. Therefore, by \eqref{V0b.lim} and $V\in\mathcal{B}$ (as $V\in \mathcal{C}$), we have that
$$V(x,i)=\lim_{n\uparrow\infty}T_{\mathbf{b}^{V}}^{n}V(x,i)=V_{0,\mathbf{b}^{V}}(x,i),$$
i.e., $\mathbf{b}^{*}:=\mathbf{b}^{V}=(b_{i}^{V})_{i\in\mathcal{E}}$ is the desired barrier function such that the conclusion of Theorem \ref{thm4.1} holds.
\end{proof}

\ \\
{\small \textbf{Acknowledgements}:  W. Wang is supported by the National Natural Science Foundation of China under no. 12171405 and no. 11661074.
X. Yu is supported by the Hong Kong Polytechnic University research grant under no. P0031417.}

\end{document}